\documentclass{amsart}
\usepackage{amscd,amsfonts,amssymb}
\usepackage{textgreek}
\usepackage{tikz}
\usepackage{pgfplots}
\usetikzlibrary{automata,positioning,calc,trees}
\usetikzlibrary{intersections,pgfplots.fillbetween}
\usepackage{graphicx,tikz}
\usepackage{pgf,tikz,pgfplots}
\usepackage{mathrsfs}
\usetikzlibrary{arrows}
\usepackage{enumerate}
\usepackage[shortlabels]{enumitem}
\usepackage{mathrsfs}
\usepackage{amssymb,amsmath,amsthm,color}
\usepackage{caption,subcaption}
\usepackage{hyperref}
\usepackage{cleveref}
\usepackage[alphabetic,bibtex-style]{amsrefs}
\usepackage{url}
\usepackage{setspace}
\usepackage{float}
\usepackage{makecell}
\tikzstyle{startstop} = [rectangle, rounded corners, minimum width=3cm, minimum height=1cm,text centered, draw=black]
\tikzstyle{process} = [rectangle, rounded corners, minimum width=3cm, minimum height=1cm, text centered, draw=black]
\tikzstyle{arrow} = [thick,->,>=stealth]

\BibSpec{article}{%
	+{}  {\PrintAuthors}                {author}
	+{,} { \textit}                     {title}
	+{.} { }                            {part}
	+{:} { \textit}                     {subtitle}
	+{,} { \PrintContributions}         {contribution}
	+{.} { \PrintPartials}              {partial}
	+{,} { }                            {journal}
	+{}  { \textbf}                     {volume}
	+{}  { \PrintDatePV}                {date}
	+{,} { \issuetext}                  {number}
	+{,} { \eprintpages}                {pages}
	+{,} { }                            {status}
	+{,} { \url}                        {url}    % <---- ADDED
	+{,} { \PrintDOI}                   {doi}
	+{,} { available at \eprint}        {eprint}
	+{}  { \parenthesize}               {language}
	+{}  { \PrintTranslation}           {translation}
	+{;} { \PrintReprint}               {reprint}
	+{.} { }                            {note}
	+{.} {}                             {transition}
}

\date{\today}
\allowdisplaybreaks
\newcommand{\R}{{\mathbb R}}       % Field of real numbers

\newcommand{\Z}{{\mathbb Z}}       % Ring of integer numbers

\newtheorem{theorem}{Theorem}[section]
\newtheorem{lemma}[theorem]{Lemma}

\newtheorem{proposition}[theorem]{Proposition}

\newtheorem*{theorem*}{Theorem}

\theoremstyle{definition}
\newtheorem{definition}[theorem]{Definition}

\numberwithin{equation}{section}

\begin{document}
	
	\title{On pointwise convergence of singular integrals of Christ-Journ\'e type}
	\author[Bhojak]{Ankit Bhojak}
	\address{Ankit Bhojak\\
		Department of Mathematics\\
		Indian Institute of Science Education and Research Bhopal\\
		Bhopal-462066, India.}
	\email{ankitb@iiserb.ac.in}

	\author[Kundu]{Sangita Kundu}
	\address{Sangita Kundu\\
		Department of Mathematics\\
		Indian Institute of Science Education and Research Bhopal\\
		Bhopal-462066, India.}
	\email{sangita24@iiserb.ac.in}

	\author[Shrivastava]{Saurabh Shrivastava}
	\address{Saurabh Shrivastava\\
		Department of Mathematics\\
		Indian Institute of Science Education and Research Bhopal\\
		Bhopal-462066, India.}
	\email{saurabhk@iiserb.ac.in}

	\begin{abstract}
	The primary objective of this paper is to establish the pointwise almost everywhere convergence of the singular integrals of Christ-Journ\'e type. To this end, we prove the weak type $(1,1)$ estimate for the corresponding maximal operator defined as
	\[
	T^*[a] f(x):= \sup_{r>0}\left|\int_{|x-y| > r} K(x-y)  \int_0^1 a(tx + (1-t)y)\, dt f(y)\, dy\right|,
	\]
	where $K$ is a Calder\'on--Zygmund kernel and $a\in L^\infty$. This result resolves a problem posed by Seeger (Rev. Mat. Iberoam., 2014).
	\end{abstract}

 \keywords{Weak type estimate, maximal operator, Christ-Journ\'e commutator}
\subjclass[2020]{Primary 42B25, 42B20}
	\maketitle
	\section{Introduction and Main Result}	
	Let $K:\mathbb{R}^d \setminus \{0\}\to\R$ be a  Calder\'on-Zygmund kernel, i.e., there exist $A>0$ and $\varepsilon \in (0,1]$ such that the following are true.
	\begin{align}
		|K(x)| &\leq A \frac{1}{|x|^{d}}, \quad x \neq 0,
		\label{1.1}\\
		|K(x+h) - K(x)| &\leq A \frac{|h|^\varepsilon}{|x|^{d+\varepsilon}}, \quad \text{for } |x| > 2|h|,\label{1.2}\\
		\|\widehat K\|_{L^\infty(\mathbb{R}^d)} &\leq A.\label{1.3}
	\end{align}
	
	Let $a \in L^\infty(\mathbb{R}^d)$ and we define $m_a(x,y):=\int_0^1 a(tx + (1-t)y)\, dt$. Let $T[a]$ denote the $d-$commutator of first order associated with the kernel $K$ and symbol $a$. It is defined by
	\begin{equation}\label{def:CJcommutator}
	T[a]f(x) := p.v. \int K(x-y)m_a(x,y)  f(y)\, dy,
	\end{equation}
	where the integral makes sense for an appropriate class of functions $f\in C_c^\infty(\R^d)$. For $d=1$, the operator reduces to the standard Calder\'on commutator, see \cite{Cal1965,CMM1982} for precise details. We also refer to some of the articles \cite{Cal1975,Muscalu2014,DL2019,HT2023} concerning the study of Calder\'on commutators in higher dimensions. For $d\geq2$, the operator $T[a]$ does not fall into the theory of Calder\'on commutators.  Christ and Journ\'e \cite{CJ1987} showed that $T[a]$ is bounded on $L^p(\R^d)$ for $1<p<\infty$. In fact, they obtained a much stronger polynomial growth estimate on the degree of multilinearity for higher order $d-$commutators. Furthermore, a systematic treatment of higher order commutators with quantitative polynomial bounds on the degree of multilinearity for general range of H\"older exponents was developed in \cite{MR3897981}. In this article, we are concerned with the first order $d-$commutator defined in \eqref{def:CJcommutator}. 
	
	At the endpoint $p=1$, Grafakos and Honz\'ik \cite{GH2012} relied on the $TT^*$ argument along with certain H\"older-type smoothness of the kernel, specific only to dimension two, to obtain the weak type $(1,1)$ estimate for $T[a]$ in dimension two. Their method is based on the techniques developed by Christ and Rubio de Francia \cite{CF1975} and by Hofmann \cite{Hofmann1988} in the context of weak type estimate for singular integral operators with rough kernels. The endpoint estimate for all dimensions $d\geq2$ was resolved by Seeger \cite{Se2014} by employing a microlocal decomposition of the kernel with non-smooth factor $m_a(x,y)$. Such a decomposition shares an analogy with the microlocal analysis of singular integrals with rough kernels in \cite{Seeger1996}. 
	
	Let us consider the family of truncations $\{T_r[a]:\;r>0\}$ of  the $d-$commutator given by
	\[
	T_r[a] f(x):= \int_{|x-y| > r} K(x-y)  m_a(x,y) f(y)\, dy. 
	\]
	The endpoint result in \cite{Se2014} implies the uniform estimate 
	\[\sup_{r>0}\|T_r[a]\|_{L^1(\R^d)\to L^{1,\infty}(\R^d)}\lesssim1.\]
	However, the problem concerning the existence of the limit $\lim_{r\to0} T_r[a] f(x)$ for almost every $x\in\R^d$ and $f\in L^1(\R^d)$ was posed in \cite{Se2014}. By the result of Stein \cite{Stein1961}, this problem is reduced to establishing the weak type $(1,1)$ estimate for the corresponding maximal operator $T^*[a]$ defined as
	\[
	T^*[a] f(x) :=\sup_{r>0} |T_r[a] f(x)|. 
	\]
	\subsection{Main result:} The main result of this paper is the weak type boundedness of $T^*[a]$ at the endpoint $p=1$. This resolves the problem posed in \cite{Se2014} in the positive. We have the following main result.

	\begin{theorem}\label{Maintheorem}
		Let $d\geq2$ and $K$ satisfies \eqref{1.1}-\eqref{1.3} and $a \in L^\infty(\mathbb{R}^d)$. Then there exists $C>0$ such that the following are true.
		\begin{align}\label{thm:strong22}
			\|T^*[a]f\|_{L^2(\R^d)}&\leq CA\|a\|_{L^\infty(\R^d)}\|f\|_{L^2(\R^d)}.
		\end{align}
		Moreover, for all $\alpha>0$, we have
		\begin{align}\label{thm:weaktype}
			\left| \{ x \in \mathbb{R}^d : T^*[a] f(x) > \alpha \} \right|
			&\leq CA\|a\|_{L^\infty(\R^d)} \frac{1}{\alpha} \|f\|_{L^1(\mathbb{R}^d)}.
		\end{align}
		Consequently, for $1<p\leq2$, there exists $C_p>0$ such that
		\begin{align}\label{thm:strongpp}
			\|T^*[a]f\|_{L^p(\R^d)}&\leq C_pA\|a\|_{L^\infty(\R^d)}\|f\|_{L^p(\R^d)}.
		\end{align}
	\end{theorem}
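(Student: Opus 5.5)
We would prove the three estimates in the order stated. Interpolation (Marcinkiewicz) between \eqref{thm:strong22} and \eqref{thm:weaktype} gives \eqref{thm:strongpp} at once, so the real content lies in the first two. By homogeneity we normalize $A=\|a\|_{L^\infty}=1$.

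\textbf{The $L^2$ estimate.} We would prove a Cotlar-type inequality. Fix $x$ and $r>0$, and split $f=f\chi_{B(x,2r)}+f\chi_{B(x,2r)^c}=:f_1+f_2$. For $z\in B(x,r/2)$ we compare $T_r[a]f(x)=T[a]f_2(x)+(\text{a piece of } f \text{ on } r<|x-y|<2r)$ with $T[a]f_2(z)$.

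Two errors appear. The first comes from $K(x-y)-K(z-y)$, which is controlled by \eqref{1.2} and bounded by $Mf(x)$. The second comes from $m_a(x,y)-m_a(z,y)$. This difference has no smoothness in the usual sense. However, after averaging over $z$ in the ball, the difference $\int_0^1 [a(tx+(1-t)y)-a(tz+(1-t)y)]\,dt$ is the average of $a$ over a thin tube-like region. We would therefore average the whole comparison over $z\in B(x,r/2)$ and use the bound
\[
\Big|\fint_{B(x,r/2)} \int_{|x-y|>2r} K(z-y)\big(m_a(x,y)-m_a(z,y)\big) f(y)\,dy\,dz\Big| .
\]
The difficulty is that this error is not pointwise small.

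Our remedy is to not use Cotlar directly. Instead, we dominate $T^*[a]$ by the sum of three terms:
\begin{enumerate}[(i)]
\item a smooth truncation maximal function $\sup_j |T[a](f) - T[a]_{\le 2^j} f|$ with dyadic smooth cutoffs;
\item the Hardy--Littlewood maximal function $Mf$, which controls the difference between rough and smooth truncations via \eqref{1.1};
\item a square function.
\end{enumerate}
For (i), we write $\sup_j |\sum_{k>j} T_k f|$, where $T_k$ has kernel $K(x-y)\phi(2^{-k}(x-y))m_a(x,y)$. We then use the standard decomposition $\sup_j|\sum_{k>j}T_k f|\le \sup_j |P_j\sum_k T_k f| + \ldots$, with $P_j$ a Littlewood--Paley approximation. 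Commuting $P_j$ past $T_k$ needs almost-orthogonality estimates of the form $\|T_k Q_{k+s}\|_{2\to2}\lesssim 2^{-\delta|s|}$. These we would derive from the microlocal decomposition in \cite{Se2014} (and the $L^2$ theory of \cite{CJ1987}), exploiting the fact that $m_a$ is an average along segments, so that the frequency-localized pieces gain decay. This gives \eqref{thm:strong22}.

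\textbf{The weak type estimate.} We would follow Seeger's scheme in \cite{Se2014}. Perform a Calder\'on--Zygmund decomposition $f=g+\sum_Q b_Q$ at height $\alpha$, and let $\mathcal E=\bigcup 2Q$; then $g$ is handled by \eqref{thm:strong22}. For $b$, we group cubes by side length $2^{\ell}$ as $B_\ell$, and use $T^*[a]b\le \sup_j|\sum_{s\ge1}T_j B_{j-s}|+$ (terms controlled by $M$). We then linearize the supremum: $j=j(x)$ is a measurable choice, and we estimate $\sum_s \|\sum_j \chi_{E_j} T_j B_{j-s}\|$ with disjoint sets $E_j$.

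For each $s$, Seeger's microlocal decomposition splits $T_j$ into a part with $L^1$ bound $\lesssim 2^{-\epsilon s}\sum\|b_Q\|_1$, using the cancellation of $b_Q$ and \eqref{1.2}, and a part estimated in $L^2$ with decay $2^{-\epsilon s}$. The key $L^2$ inequality needed is
\[
\Big\|\sum_j \chi_{E_j}\, T_j^{\mathrm{II}} B_{j-s}\Big\|_2^2\lesssim 2^{-\epsilon s}\alpha\|f\|_1 .
\]

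\textbf{Main obstacle.} The main obstacle is precisely this inequality. Since the $E_j$ are disjoint, the left side is at most $\sum_j\|T^{\mathrm{II}}_jB_{j-s}\|_2^2$, with no cross terms. So disjointness actually helps here, and Seeger's single-scale $L^2$ bounds for $T^{\mathrm{II}}_j$ (a $TT^*$ argument with the rough factor $m_a$, using the angular localization) should suffice. These combine with $\sum_j\|B_{j-s}\|_2^2\lesssim \alpha\|f\|_1$.

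The delicate point is that in \cite{Se2014} the decay for the non-maximal operator may have been obtained using orthogonality across $j$, that is, after summing in $j$. We must recheck that each piece enjoys the single-scale bound $\|T^{\mathrm{II}}_jB\|_2\lesssim 2^{-\epsilon s}\|B\|_2$, possibly after refining the microlocal decomposition so that the angular sectors depend on $s$ only. Summing in $s$ then yields \eqref{thm:weaktype}.
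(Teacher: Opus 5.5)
\textbf{The weak type part.} Your linearization is wrong, and this removes exactly the difficulty the theorem is about. The operator you must control is $\sup_k\bigl|\sum_{j>k}\sum_{s}T_jB^{j-s}(x)\bigr|$, not $\sup_j\bigl|\sum_s T_jB^{j-s}(x)\bigr|$.

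\begin{itemize}
\item Linearizing with a measurable $k(x)$ gives $\sum_j\chi_{F_j}T_jB^{j-s}$ with $F_j=\{x:k(x)<j\}$. These sets increase with $j$; they are not disjoint. Equivalently, on the disjoint sets $\{k(\cdot)=k\}$ one sees the whole tail $\sum_{j>k}T_jB^{j-s}$.
\item So the cross terms between scales do not disappear. Your computation with disjoint $E_j$ only bounds the single-scale maximal function $\sup_j|T_jB^{j-s}|$, which is essentially trivial.
\item Seeger's $L^2$ gain does not survive an $x$-dependent truncation. In Lemma~\ref{lem 3.4} the gain $2^{-s\gamma}s^{14}$ rests on almost-orthogonality across $\nu$ of the multipliers $G_{s,\nu}$ (Plancherel together with \eqref{3.4.1}). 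The cutoffs $\chi_{F_j}$ commute neither with $G_{s,\nu}$ nor with $I-P_{j-s+\ell(s)}$.
\item The bound $\sum_j\|B^{j-s}\|_2^2\lesssim\alpha\|f\|_1$ is false, since the $b_Q$ are only in $L^1$. Any usable $L^2$ bound must have right-hand side $\alpha\sum_Q\|b_Q\|_1$. It has to come from a $TT^*$ pairing of $L^1$ against $L^\infty$, as in the packing estimate that closes the proof of Lemma~\ref{lem 3.4}.
\end{itemize}

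\textbf{What is missing.} The key mechanism in the paper runs as follows, for fixed $s$.
\begin{itemize}
\item Pass to one of $3^d$ shifted grids (Lemma~\ref{lemma:shifted}), so that any two of the cubes $\widetilde{\mathcal C}_{j,m}\supset\operatorname{supp}T_jB^{j-s,m}$ are disjoint or nested.
\item Discard the set $\mathfrak X_1^{\vec u}$ where more than $2^{s(d+\varepsilon_0)}$ of them overlap; its measure is $\lesssim 2^{-s\varepsilon_0}\alpha^{-1}\|f\|_1$.
\item Layer the remaining cubes into at most $2^{s(d+\varepsilon_0)}$ generations. Off $\mathfrak X_1^{\vec u}$, every truncation $\sum_{j>k}$ is then a partial sum over generations.
\item Apply Rademacher--Menshov (Lemma~\ref{thm:RM}). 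The signs now sit on the pieces $B^{j-s,m}$ rather than on $x$, and the loss is $\log^2(2^{s(d+\varepsilon_0)})\sim s^2$.
\end{itemize}
This step uses the compact support of the original $T_j$, so the microlocal pieces cannot be fed in directly. Instead one needs the sign-uniform bound $\|\sum_jT_jB^{j-s}\|_2^2\lesssim s^{-5-3\delta_0}\alpha\sum_Q\|b_Q\|_1$ for the unsplit operator (Proposition~\ref{mainlemma}). Proving it requires merging the $L^1$ bounds of Lemmas~\ref{lem 3.1}--\ref{lem 3.3} and the $L^2$ bound of Lemma~\ref{lem 3.4}. The paper does this with the level-set splitting and the $L^1$--$\mathrm{BMO}$ weak-$L^3$ bound \eqref{3.1.8}.

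\textbf{The $L^2$ part.} Your $L^2$ part also has a gap. The claimed estimate $\|T_kQ_{k+s}\|_{2\to2}\lesssim 2^{-\delta|s|}$ fails when the input frequency is low relative to $2^{-k}$. The kernel $K_km_a$ has no cancellation ($T_k1\neq0$ in general), so a paraproduct in $a$ appears. The paper handles this in several steps:
\begin{itemize}
\item it regularizes $K$ into the classes of Definition~\ref{def:Besov};
\item it splits $a$ and $f$ into high and low parts and subtracts $\eta_{j,k}$ to restore mean zero;
\item it invokes the summed-in-$j$, sign-uniform bounds of Lemmas~\ref{Lem8.1}--\ref{Lem8.3} from \cite{MR3897981}, followed by Khinchine.
\end{itemize}
The microlocal decomposition of \cite{Se2014} does not provide such $L^2$ estimates.
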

    The proof of \Cref{Maintheorem} requires several ingredients that warrants a comprehensive outline of the methodology, that we provide in the next section.
	\subsection{Methodology of proof of \Cref{Maintheorem}}
		First, we discuss the strategy for the $L^2-$estimate \eqref{thm:strong22}. This requires a Cotlar type inequality involving a frequency localization of a dyadic version of truncations $T_r[a]$ in the spirit of \cite[Theorem E]{DR1986}. We then perform a dyadic regularization of the kernel $K$ into certain Besov spaces (Definition \ref{def:CJcommutator}) as in \cite{MR3897981}, followed by an appropriate Littlewood-Paley decomposition of the non-regular symbol $a$ and the input function into low and high components. After a randomization argument via Khinchine's inequality, this reduces to studying certain paraproducts whose estimates are available from the machinery developed in \cite{MR3897981}. The details of  the proof of \eqref{thm:strong22} are given in \Cref{sec:proofL2}. The $L^p$ estimate \eqref{thm:strongpp} follows by interpolating \eqref{thm:strong22} and \eqref{thm:weaktype}.
		
		We now outline the steps involved in proving the weak type estimate \eqref{thm:weaktype}. 
		\begin{enumerate}
			\item The Calder\'on-Zygmund decomposition of the input function and the $L^2-$boundedness of $T^*[a]$ reduces the problem to estimating the "bad" part supported on a maximal collection of cubes. 
			\item We employ a partition of unity to obtain a multiscale decomposition of the operator. To estimate the bad part, we will utilize a further partition of the bad cubes based on the shifted dyadic grids (\Cref{lemma:shifted}) as in \cite{bhojak2026endpoint}.
			\item For a fixed grid, we perform an algorithm based on ideas from \cite{KL2018,KL2020,bhojak2026endpoint} to control the interaction of scales of the operator acting on the bad cubes. This enables us to employ the following version of the Rademacher-Menshov theorem to control the supremum appearing over the scales.
		\begin{lemma}[Rademacher-Menshov Theorem]\label{thm:RM}
		Let $(X,\mu)$ be a measure space and $\{\phi_j:1\le j\le N\}$ a sequence of functions such that  for all sequences of coefficients $c_j\in\{0,\pm1\}$,
		\begin{equation*}
		\left\|
		\sum_{j=1}^{N} c_j\phi_j
		\right\|_{L^2(X)}
		\le A.
		\end{equation*}

		Then
		\begin{equation*}
		\left\|
		\sup_{1\le n\le N}
		\left|
		\sum_{j=1}^{n}\phi_j
		\right|
		\right\|_{L^2(X)}
		\lesssim A\log(2+N).
		\end{equation*}
		\end{lemma}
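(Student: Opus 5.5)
The plan is the classical dyadic-splitting argument. First reduce to $N=2^m$ by padding the sequence with zero functions; this changes $\log(2+N)$ only by a constant factor. For each level $0\le k\le m$, consider the dyadic intervals $I_{k,l}=\{(l-1)2^{k}+1,\dots,l2^{k}\}$ for $1\le l\le 2^{m-k}$, and set $\Phi_{k,l}=\sum_{j\in I_{k,l}}\phi_j$. Every initial segment $\{1,\dots,n\}$ is a disjoint union of at most one interval $I_{k,l(k,n)}$ from each level $k$; these are read off from the binary expansion of $n$. Hence pointwise
\[
\Bigl|\sum_{j=1}^n\phi_j\Bigr|\le\sum_{k=0}^{m}\bigl|\Phi_{k,l(k,n)}\bigr|\le\sum_{k=0}^m\Bigl(\sum_{l=1}^{2^{m-k}}|\Phi_{k,l}|^2\Bigr)^{1/2}.
\]
The last expression no longer depends on $n$, so it also bounds the supremum over $n$.

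Next apply Minkowski's inequality in $L^2(X)$ to the outer sum over $k$. This reduces the claim to showing, for each fixed level $k$,
\[
\Bigl\|\Bigl(\sum_{l}|\Phi_{k,l}|^2\Bigr)^{1/2}\Bigr\|_{L^2(X)}^2=\sum_l\|\Phi_{k,l}\|_{L^2}^2\lesssim A^2 .
\]
Summing over the $m+1\approx\log(2+N)$ levels then gives the stated bound $A\log(2+N)$.

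This level-wise estimate is the key step, and the only place where the hypothesis enters; note that no orthogonality of the $\phi_j$ is assumed. Fix $k$ and take independent Rademacher signs $\varepsilon_l=\pm1$. Define $c_j=\varepsilon_l$ for $j\in I_{k,l}$, so each $c_j\in\{\pm1\}$ and the hypothesis gives $\|\sum_l\varepsilon_l\Phi_{k,l}\|_{L^2}^2\le A^2$ for every choice of signs. Averaging over the signs, the cross terms $\mathbb{E}[\varepsilon_l\varepsilon_{l'}]\langle\Phi_{k,l},\Phi_{k,l'}\rangle$ vanish for $l\neq l'$. This leaves the exact identity $\mathbb{E}\|\sum_l\varepsilon_l\Phi_{k,l}\|_{L^2}^2=\sum_l\|\Phi_{k,l}\|_{L^2}^2$, so the left-hand side is at most $A^2$.

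For real-valued functions this parallelogram/Khinchine averaging is exact. For complex-valued $\phi_j$ the cross terms are $2\,\mathrm{Re}\langle\cdot,\cdot\rangle$ and still average to zero, so nothing changes. The only remaining bookkeeping is the binary-expansion decomposition of $\{1,\dots,n\}$ into at most one dyadic block per level. Since the sign-averaging step makes the main obstacle disappear, I expect no genuine difficulty. Measurability of the supremum is automatic because it is a finite maximum.
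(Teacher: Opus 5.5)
Your proof is correct and complete. The binary-expansion decomposition of each initial segment into at most one dyadic block per level, then Minkowski over the $m+1\lesssim\log(2+N)$ levels, then the sign-averaging identity $\mathbb{E}\|\sum_l\varepsilon_l\Phi_{k,l}\|_{L^2}^2=\sum_l\|\Phi_{k,l}\|_{L^2}^2\le A^2$ at each level, together give exactly the stated bound.

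The paper does not prove this lemma; it quotes it as a known tool, used in the works cited alongside it. Your dyadic-splitting argument with randomization at each level is the standard proof of this version of the Rademacher--Menshov inequality.
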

	We remark that Lemma~\ref{thm:RM} has been known to be crucial to address various maximally truncated operators, for instance, the maximal oscillatory singular integrals in \cite{KL2018,KL2020,IKM2025} and the maximal rough singular integrals in \cite{Lai2025}.
	\item \label{step4} We employ the microlocal decomposition of the kernel from \cite{Se2014} to exploit orthogonality in the $L^2-$ estimate (\Cref{lem 3.4}) for certain pieces of the kernel via a $TT^*$ argument. The error terms are then estimated with a quantitative polynomial growth in the $L^1-$norm (Lemmas \ref{lem 3.1}, \ref{lem 3.2} and \ref{lem 3.3}) in the spirit of Calder\'on-Zygmund theory.
	\item The estimates from step \eqref{step4} are then unified into a single $L^2$ estimate (\Cref{mainlemma}), which enables us to employ Lemma~\ref{thm:RM}. Such an estimate is a based on an interpolation scheme with appropriate $L^1$ and $BMO$ estimates for the corresponding multiscale operator.
	
	The proof of \Cref{Maintheorem} \eqref{thm:weaktype} is contained in Sections \ref{sec:proofmain}-\ref{sec:lemmaL2}.
		\end{enumerate}

	%\subsection{Outline of the paper} The proofs of \Cref{Maintheorem}\eqref{thm:strong22} and \eqref{thm:weaktype} are carried out in Sections \ref{sec:proofL2} and \ref{sec:proofmain} respectively.

	\section{$L^2-$boundedness of $T^*[a]$: Proof of \Cref{Maintheorem}\eqref{thm:strong22}}\label{sec:proofL2}
We may assume that $A=1$ and $\|a\|_{L^{\infty}(\mathbb{R}^d)}=1$. Let $f \in L^1(\mathbb R^d)\cap L^2(\mathbb R^d).$ Let $\psi$ be a non-negative radial $C^\infty(\mathbb{R}^d)$ function such that $\psi$ is supported in
	$
	\left\{ y \in \mathbb{R}^d : \tfrac{1}{2} \leq |y| \leq 2 \right\}
	$ and
	$
	\sum_{j \in \mathbb{Z}} \psi(2^{-j} y) = 1$ for all $y \neq 0$. Then, we have
	\[
	K(y) = \sum_{j \in \mathbb{Z}} K(y)\psi(2^{-j}y),\quad y \neq 0. 
	\]
	
	We set 
	$
	K_j(y) = K(y)\psi(2^{-j}y). 
	$
	For the ease of notation, we define $T_j$ to be the integral operator given by
	\[
	T_jf(x)=\int K_j(x-y)m_a(x,y) f(y)dy. 
	\]
	A standard discretization of the supremum implies that
	\begin{align}\label{discritazation}
    T^*[a] f(x)\leq Mf(x)+\sup_{N \in \mathbb{Z}} \left| \sum_{j > -N} T_jf(x) \right|,
    \end{align}
	where $Mf$ is the Hardy-Littlewood maximal function.

	We set $\Psi_N(y)=\sum_{j>-N}\psi(2^{-j}y)$ and define $\widetilde{T}_N$ as follows.
	\[\tilde{T}_N[a]f(x):=\int_{\mathbb{R}^d}K(x-y)m_a(x,y) \Psi_N(x-y)f(y)\ dy.\]
	Let $\varPhi$ be a radial nonnegetive $C^\infty(\mathbb{R}^d)$ function supported in $\{x \in \mathbb{R}^d:|x|\leq \frac{1}{4}\}$ and satisfying $\int \varPhi(x)\,dx = 1. $ 
	Let $\varPhi^{(N)}(x) := 2^{Nd}\varPhi(2^{N}x)$.
	Let $P^{(N)}$ be the integral operators defined as
	$P^{(N)}f(x):=\varPhi^{(N)}*f(x)$ and $\Delta_N=P^{(N)}-P^{(N-1)}$. We write
	\[\sup_{N \in \mathbb{Z}} \left| \sum_{j > -N} T_jf(x) \right|\leq \sup_{N\in\Z} \left|P^{(N)}T[a]f(x)\right|+\sup_{N\in\Z}\left|P^{(N)}\left(\sum_{j \leq -N} T_jf \right)(x)\right|+\sup_{N\in\Z}\left|(I-P^{(N)})\widetilde{T}_Nf(x)\right|.\label{2.2}\tag{2.2}\]

	The first and the second term in the expression above are easy to estimate. Indeed, the $L^2-$bounds follow from the $L^2-$boundedness of $M$ and $T[a]$ along with the inequalities below.
	\begin{align*}
		\sup_{N\in\Z} \left|P^{(N)}T[a]f(x)\right|&\leq MT[a]f(x),\\
		\sup_{N\in\Z}\left|P^{(N)}\left(\sum_{j \leq -N} T_jf \right)(x)\right|&\lesssim Mf(x),
	\end{align*}
	where the second inequality follows from the pointwise estimate 
	\[\left|P^{(N)}\left(\sum_{j \leq -N} T_jf\right)(x)\right|\lesssim(2^{Nd}(1+2^N|\cdot|)^{-d-1})*|f|(x).\]
	Thus, it remains to estimate the third term in \eqref{2.2}. For any function $J:\R\times \R^d\to\R$ and $\delta>0$, we set $J^{(\delta)}(\alpha,x):=\delta^df(\alpha,\delta x)$. For each $j\in\Z$, we define the trilinear operator $W_j$ as follows 
	\[
	W_{j}[J,a](f)(x)
	:=\int_{\mathbb{R}}\int_{\mathbb{R}^d}J^{(2^j)}(\alpha,v)a(x-\alpha v)f(x-v)\,dv\,d\alpha.\]
	We define the kernels $\mu_{j,k}$ and $\zeta_j$ for $j,k\in\Z$ as follows.
\[\mu_{j,k}(\alpha,v):=\zeta_j(\alpha,v)\Psi(2^{k}v),\quad \zeta_j^{(2^j)}(\alpha,v):=\Delta^{(j)}K(v)\chi_{[0,1]}(\alpha).\]
The kernels defined above lie in certain special class of Besov spaces introduced in \cite{MR3897981}.
\begin{definition}\label{def:Besov}
	For $0 \leq \varepsilon \leq 1$, we define four (semi-)norms as follows.
\begin{align*}
& \|\varsigma\|_{\mathcal{B}_{\varepsilon, 1}}:=\max _{1 \leq i \leq n} \iint\left(1+\left|\alpha_i\right|\right)^{\varepsilon}|\varsigma(\alpha, v)| d \alpha d v.  \\
& \|\varsigma\|_{\mathcal{B}_{\varepsilon, 2}}:=\sup _{\substack{0<h \leq 1,
1 \leq i \leq n}} h^{-\varepsilon} \iint\left|\varsigma\left(\alpha+h e_i, v\right)-\varsigma(\alpha, v)\right| d \alpha d v,  \\
&\hspace{-3.2cm}\text{where $\{e_i,\;i=1,\cdots,d\}$ forms a standard basis of $\R^d$}.\\
& \|\varsigma\|_{\mathcal{B}_{\varepsilon, 3}}:=\sup _{0<|h| \leq 1}|h|^{-\varepsilon} \iint|\varsigma(\alpha, v+h)-\varsigma(\alpha, v)| d \alpha d v. \\
& \|\varsigma\|_{\mathcal{B}_{\varepsilon, 4}}:=\iint(1+|v|)^{\varepsilon}|\varsigma(\alpha, v)| d \alpha d v 
\end{align*}
Let $\mathcal{B}_{\varepsilon}\left(\mathbb{R} \times \mathbb{R}^d\right)$ be the space of those $\varsigma \in L^1\left(\mathbb{R} \times \mathbb{R}^d\right)$ such that the following is true

\begin{equation*}
\|\varsigma\|_{\mathcal{B}_c}:=\|\varsigma\|_{\mathcal{B}_{c, 1}}+\|\varsigma\|_{\mathcal{B}_{c, 2}}+\|\varsigma\|_{\mathcal{B}_{c, 3}}+\|\varsigma\|_{\mathcal{B}_{c, 4}}<\infty.
\end{equation*}

\end{definition}
\begin{lemma}\label{lemma:estimate} The following statements hold true.
	\begin{enumerate}
		\item For all $j\in\Z$ and $\alpha\in\mathbb{R}$, we have
		\[\tag{2.3}\label{2.3}\int \zeta_j(\alpha,v)\,dv=0.\]
		\item For any $0<\delta<\varepsilon.$, we have
		\[\label{2.4}\tag{2.4}\sup_{j\in\mathbb{Z}}\|\zeta_j\|_{B_\delta}\lesssim C_{\delta,\varepsilon,d}.\] 
		\item For all $j,k\in\Z$ and $0<\delta<\varepsilon$, we have
		\[\|\mu_{j,k}\|_{L^1(\mathbb{R}\times\mathbb{R}^d)}\le\|\zeta_{j}\|_{L^1(\mathbb{R}\times\mathbb{R}^d)},\quad\text{and}\quad\|\mu_{j,k}\|_{\mathcal{B}_{\delta}(\mathbb{R}\times\mathbb{R}^d)}\le |\zeta_{j}\|_{\mathcal{B}_{\delta}(\mathbb{R}\times\mathbb{R}^d)}.\]
	\end{enumerate}
\end{lemma}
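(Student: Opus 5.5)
The plan is to reduce everything to the single scale $j=0$ by dilation invariance. I interpret $\Delta^{(j)}K$ as $K*(\varPhi^{(j)}-\varPhi^{(j-1)})$. Unwinding the definition $\zeta_j^{(2^j)}(\alpha,v)=\Delta^{(j)}K(v)\chi_{[0,1]}(\alpha)$ gives $\zeta_j(\alpha,v)=\chi_{[0,1]}(\alpha)\,\big(K_{[j]}*\phi\big)(v)$, where $\phi:=\varPhi^{(0)}-\varPhi^{(-1)}$ and $K_{[j]}(v):=2^{-jd}K(2^{-j}v)$ (up to the sign convention for the dilation). The kernel $K_{[j]}$ satisfies \eqref{1.1}--\eqref{1.3} with the same constant $A$, since these conditions are dilation invariant. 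Hence it suffices to prove all bounds for $\chi_{[0,1]}(\alpha)\,(K*\phi)(v)$ with constants depending only on $A,\varepsilon,d$. The function $\phi$ is smooth, supported in $\{|y|\le 1/2\}$, and has $\int\phi=0$.

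\emph{Pointwise bounds for $K*\phi$.} For $|v|\ge 1$, I will use $\int\phi=0$ to write $K*\phi(v)=\int (K(v-y)-K(v))\phi(y)\,dy$. Since $|v|>2|y|$, \eqref{1.2} gives $|K*\phi(v)|\lesssim |v|^{-d-\varepsilon}$. For $|v|\le 1$, I will use the standard consequence of \eqref{1.1}--\eqref{1.3} that truncated integrals $\int_{r<|y|<R}K$ are bounded uniformly in $r,R$. Combined with the smoothness of $\phi$, this gives $|K*\phi(v)|\lesssim 1$, because one can subtract $\phi(v)$ near the singularity and use the cancellation of $K$ on annuli. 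The same two arguments apply verbatim to $\nabla(K*\phi)=K*\nabla\phi$, since $\nabla\phi$ also has mean zero and is supported in $\{|y|\le1/2\}$. Hence
\[
|K*\phi(v)|+|\nabla (K*\phi)(v)|\lesssim \min\{1,|v|^{-d-\varepsilon}\}.
\]

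\emph{Proof of (1) and (2).} By the bound above, $K*\phi\in L^1$, so its Fourier transform is continuous and equals $\widehat K(\xi)\big(\widehat\varPhi(\xi/2^{0})-\widehat\varPhi(2\xi)\big)$ (with the appropriate dilations). This vanishes at $\xi=0$ because $\widehat K$ is bounded by \eqref{1.3} and $\widehat\varPhi(0)=1$, which gives \eqref{2.3}. For \eqref{2.4} I estimate the four seminorms in turn.
\begin{itemize}
\item $\mathcal B_{\delta,1}$: since $\alpha\in[0,1]$, this is at most $2^\delta\|K*\phi\|_{L^1}$.
\item $\mathcal B_{\delta,2}$: shifting $\chi_{[0,1]}$ by $h$ changes it on a set of measure $2h$, so the difference integral is $\le 2h\|K*\phi\|_{L^1}\le 2h^{\delta}\|K*\phi\|_{L^1}$ for $h\le1$.
\item $\mathcal B_{\delta,3}$: the difference is at most $|h|\,\|\nabla(K*\phi)\|_{L^1}\lesssim |h|^\delta$.
\item $\mathcal B_{\delta,4}$: this is $\int(1+|v|)^\delta\min\{1,|v|^{-d-\varepsilon}\}\,dv<\infty$, precisely because $\delta<\varepsilon$.
\end{itemize}

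\emph{Proof of (3).} Since $0\le\Psi\le1$, we have $|\mu_{j,k}|\le|\zeta_j|$ pointwise. This gives the $L^1$ bound and the $\mathcal B_{\delta,1}$, $\mathcal B_{\delta,4}$ bounds immediately. Multiplication by a function of $v$ alone commutes with translation in $\alpha$, so the $\mathcal B_{\delta,2}$ bound also follows pointwise.

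The main obstacle is $\mathcal B_{\delta,3}$, where translation in $v$ does not commute with the cutoff $\Psi(2^k\cdot)$. Writing
\[
\mu(v+h)-\mu(v)=(\zeta(v+h)-\zeta(v))\Psi(2^k(v+h))+\zeta(v)\big(\Psi(2^k(v+h))-\Psi(2^kv)\big),
\]
the first term is controlled by $\|\zeta_j\|_{\mathcal B_{\delta,3}}$. The second term requires extra work. I would first try to bound it by $\|\zeta_j\|_{L^\infty}\cdot|\{v: \Psi(2^k(v+h))\neq\Psi(2^kv)\}|$ when $2^k|h|\ge1$. When $2^k|h|<1$, I would bound it by $2^k|h|\int_{|v|\sim 2^{-k}}|\zeta_j|$, using $|\zeta_j|\lesssim\min\{1,|v|^{-d-\varepsilon}\}$ to make this $\lesssim|h|^\delta$. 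I expect this to yield the claim up to a harmless constant factor, possibly $C_{\delta,\varepsilon,d}$ rather than exactly $1$, which suffices for the later applications.
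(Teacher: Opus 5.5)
Your proposal is correct, and for part (3) it follows the paper's scheme with one change in the key estimate. Part (3) is the only part the paper's proof actually argues. There you match it exactly: pointwise domination $|\mu_{j,k}|\le|\zeta_j|$ for the $L^1$, $\mathcal{B}_{\delta,1}$, $\mathcal{B}_{\delta,2}$, $\mathcal{B}_{\delta,4}$ parts, the same splitting of the $v$-difference, and the same case distinction $|h|\le 2^{-k}$ versus $|h|>2^{-k}$, with the mean value theorem for $\Psi(2^k\cdot)$ in the first case.

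The difference is how $\zeta_j$ is controlled on the ball $\{|v|\le 2^{-k+1}+|h|\}$. The paper uses Cauchy--Schwarz, Plancherel and \eqref{1.3} to bound $\|\zeta_j\|_{L^2}$, paying $(2^{-k+1}+|h|)^{d/2}$. You use the uniform pointwise bound $|\zeta_j|\lesssim\min\{1,|v|^{-d-\varepsilon}\}$ and pay the volume $\lesssim 2^{-kd}$. Both close:
\begin{itemize}
\item for $k\ge0$ and $2^k|h|<1$, you get $2^k|h|\cdot 2^{-kd}\le|h|$;
\item for $k\ge0$ and $2^k|h|\ge1$, you get $2^{-kd}\le|h|^d$;
\item for $k<0$ only the first case occurs, and $2^k|h|\,\|\zeta_j\|_{L^1}\le|h|$ suffices, which is the paper's separate remark for $k<0$.
\end{itemize}
One imprecision: for $k\ge0$ and $|h|$ close to $2^{-k}$, the set where $\Psi(2^k(v+h))\neq\Psi(2^kv)$ reaches down to $|v|\approx0$, so it is not an annulus $|v|\sim2^{-k}$. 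Since you only use $|\zeta_j|\lesssim1$ there, nothing changes.

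What your route buys is that the same pointwise bounds for $K*\phi$ and $K*\nabla\phi$, after your dilation reduction, also prove (1) and (2). The paper states these without argument. Your treatment of the four seminorms is fine, and $\delta<\varepsilon$ enters exactly in $\mathcal{B}_{\delta,4}$, as you note.

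You could simplify the local bound. Since $K*\phi$ is a tempered distribution with $|\widehat{K}\,\widehat\phi|\le A|\widehat\phi|\in L^1$, you get $|K*\phi|\le A\|\widehat\phi\|_{L^1}$ directly, as the paper does in the estimate for $\mathcal{L}^*[a]$. This avoids invoking uniform boundedness of $\int_{r<|y|<R}K$.

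Finally, like the paper's proof, yours gives a uniform bound $\|\mu_{j,k}\|_{\mathcal{B}_\delta}\lesssim 1$ rather than the literal inequality with constant $1$. That uniform bound is all that is used later.
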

\begin{proof}
	The inequalities $\|\mu_{j,k}\|_{L^1(\mathbb{R}\times\mathbb{R}^d)}\le\|\zeta_{j}\|_{L^1(\mathbb{R}\times\mathbb{R}^d)}$ and $\|\mu_{j,k}\|_{\mathcal{B}_{\delta,i}(\mathbb{R}\times\mathbb{R}^d)}\le \|\zeta_{j}\|_{\mathcal{B}_{\delta,i}(\mathbb{R}\times\mathbb{R}^d)}$ for $i=1,2,4$ are straightforward. Note that for $k< 0$, we have
	\[\|\mu_{j,k}\|_{\mathcal{B}_{\delta,3}(\mathbb{R}\times\mathbb{R}^d)}\lesssim
	\|\zeta_j\|_{\mathcal{B}_{\delta,3}(\mathbb{R}\times\mathbb{R}^d)}
	+\|\zeta_j\|_{\mathcal{B}_{\delta,1}(\mathbb{R}\times\mathbb{R}^d)}.\]
	For $k \ge 0$, we write 
\begin{align*}
    \|\mu_{j,k}\|_{\mathcal{B}_{\delta,3}(\mathbb{R}\times\mathbb{R}^d)}
	\le
	\|\zeta_j\|_{\mathcal{B}_{\delta,3}(\mathbb{R}\times\mathbb{R}^d)}
	+
	\sup_{0<|h|\leq 1}
	|h|^{-\delta}
	\int_{\mathbb{R}}\int_{|v|\le2^{-k+1}+|h|}
	|\zeta_j(\alpha,v)||\Psi(2^k(v+h))-\Psi(2^k v)|\,d\alpha\,dv.
\end{align*}
Next, we consider the two cases $0<|h|\leq2^{-k}$ and $2^{-k}<|h|\leq1$ separately. 

\textit{Case I:} For $0<|h|\leq2^{-k}$, we use mean-value theorem, Cauchy-Schwarz inequality, Plancherel's theorem and (\ref{1.3}), to see that the last term is dominated by
\begin{align*}
|h|^{1-\delta}\,
2^k\|\nabla\Psi\|_{L^\infty(\mathbb{R}^d)}
(2^{-k+1}+|h|)^{d/2}
\left[
\int
\left|
\big(\hat{\varPhi}(\xi)-\hat{\varPhi}(2\xi)\big)
\,\hat{K}(2^j\xi)
\right|^2\,d\xi
\right]^{1/2}\lesssim 2^{-k(\frac{d}{2}-\delta)}\lesssim 1.
\end{align*}
\textit{Case II:} For $2^{-k}<|h|\leq1$, we apply Cauchy-Schwarz inequality, Plancherel's theorem and (\ref{1.3}) to obtain that the last term is dominated by 
\begin{align*}
|h|^{-\delta}\,
(2^{-k+1}+|h|)^{d/2}
\left[
\int
\left|
\big(\hat{\varPhi}(\xi)-\hat{\varPhi}(2\xi)\big)
\,\hat{K}(2^j\xi)
\right|^2\,d\xi
\right]^{1/2}\lesssim1.
\end{align*}

\end{proof}
In view of the definition of $W_j$, we write
\begin{align*}
    &\sup_{N\in\Z}\left|(I-P^{(N)})\widetilde{T}_Nf(x)\right|\\
	\leq&\sup_{N \in \mathbb{Z}}|(I-P^{(N)})\sum_{j>N}W_{j}[\mu_{j,N-j},a]f(x)|+\sup_{N \in \mathbb{Z}}|(I-P^{(N)})\sum_{j\leq N}W_{j}[\mu_{j,N-j},a]f(x)|\\
    =:&\mathcal{G}_1^*[a]f(x)+\mathcal{G}_2^*[a]f(x).
\end{align*}
\subsection*{Estimate for $\mathcal{G}_1^*[a]$:} We use the fact $\int\phi=1$ and (\ref{1.3}) to obtain

\begin{align*}
    |\Delta^{(j)}K(v)|&=
\left|
\int_{|y|\leq 2^{-j-1}}
\left[
2^{jd}\varPhi(2^j y)
-
2^{(j-1)d}\varPhi(2^{j-1}y)
\right]
\bigl(K(v-y)-K(v)\bigr)\,dy
\right|\\
& \leq \frac{1}{|v|^{d+\varepsilon}}
\int
|y|^\varepsilon
\left|
2^{jd}\phi(2^j y)
-
2^{(j-1)d}\phi(2^{j-1}y)
\right|\,dy \\
&\lesssim
\frac{2^{-j\varepsilon}}{|v|^{d+\varepsilon}}.
\end{align*}
This implies that
\begin{align*}
   \big |\sum_{j>N}W_{j}[\mu_{j,N-j},a]f(x)\big|&\lesssim\sum_{j>N}2^{-j\varepsilon}\int_{|v|\ge 2^{-N+1}}\frac{1}{|v|^{d+\varepsilon}}|f(x-v)|dv\\
   &=\sum_{j>N}2^{-j\varepsilon}\sum_{k=0}^\infty\int_{2^k \ 2^{-N+1}\le|v|\le 2^{k+1} \ 2^{-N+1}}\frac{1}{|v|^{d+\varepsilon}}|f(x-v)|dv\\
   &\lesssim\sum_{j>N}2^{-(j-N)\varepsilon}\sum_{k=0}^\infty\frac{1}{2^{k\varepsilon}}Mf(x)\\
   &\lesssim Mf(x).
\end{align*}
Thus $\mathcal{G}_1^*[a]f(x)\lesssim M(Mf)(x)$. Consequently, we get that 
$$\|\mathcal{G}_1^*[a]f\|_2\lesssim\|f\|_2.$$
%%%%%%%%%%%%%%%%
\subsection*{Estimate for $\mathcal{G}_2^*[a]$:} We recall that $I-P^{(j)}=\sum_{r\geq 1}\Delta_{j+r}$, with convergence in the strong operator topology (as operators $L^p(\mathbb{R}^d) \to L^p(\mathbb{R}^d)$, $1<p<\infty$). Thus, we write
\begin{align*}
(I-P^{(N)})\sum_{j\leq N}W_j[\mu_{j,N-j},a]f
&=\mathcal{H}_{1,N}[a]f+\mathcal{H}_{2,N}[a]f+\mathcal{L}_N[a]f,
\end{align*}
where
\begin{align*}
\mathcal{H}_{1,N}[a]f
&:=(I-P^{(N)})\sum_{k\geq0}
W_{N-k}[\mu_{N-k,k},a](I-P^{(N-k)})f,
\\
\mathcal{H}_{2,N}[a]f
&:=(I-P^{(N)})\sum_{k\geq0}
W_{N-k}[\mu_{N-k,k},(I-P^{(N-k)})a]P^{(N-k)}f,
\end{align*}
and
\begin{align*}
\mathcal{L}_N[a]f
&:=(I-P^{(N)})\sum_{k\geq0}
W_{N-k}[\mu_{N-k,k},P^{(N-k)}a]P^{(N-k)}f.
\end{align*}
Thus we have
\[\mathcal{G}_2^*[a]f(x)\leq\mathcal{H}_{1}^*[a]f(x)+\mathcal{H}_{2}^*[a]f(x)+\mathcal{L}^*[a]f(x),\]
where
\[\mathcal{H}_{l}^*[a]f(x)=\sup_{N\in\Z}|\mathcal{H}_{l,N}[a]f(x)|,\;l=1,2,\quad \mathcal{L}^*[a]f(x)=\sup_{N\in\Z}|\mathcal{L}_N[a]f(x)|.\]
\subsubsection{Estimate for $\mathcal{H}_{1}^*[a]$:} We write
\begin{align*}
\mathcal{H}_{1,N}[a]f
&=(I-P^{(N)})\sum_{k\geq0}
W_{N-k}[\mu_{N-k,k},a](I-P^{(N-k)})f\\
&=\sum_{k\geq0}\sum_{\ell\geq1}\sum_{r\geq1}
\Delta_{N+\ell}
W_{N-k}[\mu_{N-k,k},a]
\Delta_{N-k+r}f.
\end{align*}

We will require the following lemma from \cite{MR3897981}. For a proof, we refer to \cite[Theorem 10.1]{MR3897981}.
\begin{lemma}\label{Lem8.1}
Let $0<\delta<1$ and suppose that
$\sup_j\|\varsigma_j\|_{B_\delta(\mathbb{R}\times\mathbb{R}^d)}<\infty$ and $b\in L^\infty(\mathbb{R}^d)$. Let $k_1,k_2\in \mathbb{N}.$ Then $\sum_j
\Delta_{j+k_1}W_j[\varsigma_{j},b]\Delta_{j+k_2}$
converges in the strong operator topology, as bounded operators on $L^2(\mathbb R^d)$. Moreover, there is $c>0$ such that
\begin{align*}
&\|\sum_j
\Delta_{j+k_1}W_j[\varsigma_{j},b]\Delta_{j+k_2}\|_{L^2(\mathbb R^d)\to L^2(\mathbb R^d)}\\
&\lesssim
\sup_j\|\varsigma_j\|_{L^1(\mathbb{R}\times\mathbb{R}^d)}
\min\left\{1,2^{-(k_1+k_2)c\delta}\tfrac{\sup_j\|\varsigma_j\|_{\mathcal{B}_\delta(\mathbb{R}\times\mathbb{R}^d)}}{\sup_j\|\varsigma_j\|_{L^1(\mathbb{R}\times\mathbb{R}^d)}}\right\}\|b\|_{L^\infty(\mathbb{R}^d)}
.
\end{align*}
\end{lemma}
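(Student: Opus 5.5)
The plan is to treat the operator as a paraproduct-type sum and prove the two pieces of the minimum separately: a crude bound with constant $\sup_j\|\varsigma_j\|_{L^1}\|b\|_{L^\infty}$, and a decaying bound $2^{-(k_1+k_2)c\delta}\sup_j\|\varsigma_j\|_{\mathcal{B}_\delta}\|b\|_{L^\infty}$. Taking the smaller of the two gives the stated form, after factoring out $\sup_j\|\varsigma_j\|_{L^1}$. Strong convergence comes from the same estimates. Partial sums over finite ranges of $j$ satisfy the uniform bound, and the tails act on $\Delta_{j+k_2}f$ with $\sum_j\|\Delta_{j+k_2}f\|_2^2\lesssim\|f\|_2^2$, so they are Cauchy in $L^2$.

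\textbf{Crude bound.} For each fixed $\alpha$, the operator $f\mapsto a(x-\alpha v)$-weighted convolution is dominated pointwise by $\|b\|_\infty\int|\varsigma_j^{(2^j)}(\alpha,v)||f(x-v)|\,dv$. Minkowski's inequality then gives $\|W_j[\varsigma_j,b]\|_{2\to2}\le\|\varsigma_j\|_{L^1}\|b\|_\infty$. To sum over $j$, I would use almost orthogonality (Cotlar–Stein). Write $S_j=\Delta_{j+k_1}W_j\Delta_{j+k_2}$.
\begin{itemize}
\item The products $S_j^*S_i$ contain $\Delta_{j+k_1}\Delta_{i+k_1}$, which decays like $2^{-|i-j|}$ because the $\Delta$'s have Fourier support and smoothness separated at dyadic scales.
\item The products $S_jS_i^*$ contain $\Delta_{j+k_2}\Delta_{i+k_2}$ and decay similarly.
\end{itemize}
Alternatively, one can avoid Cotlar–Stein by duality: pair with $g$, apply Cauchy–Schwarz in $j$, and use the square function bounds $\sum\|\Delta_{j+k}f\|_2^2\lesssim\|f\|_2^2$. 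This gives the uniform bound directly.

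\textbf{Decay bound.} This is the main obstacle. The idea is to exploit that $\Delta_{j+k_1}$ and $\Delta_{j+k_2}$ localize to frequencies $\sim2^{j+k_1}$ and $\sim2^{j+k_2}$, which are much higher than the natural scale $2^j$ of $\varsigma_j^{(2^j)}$. Fix $\alpha$ and write the operator as convolution in $v$ with a multiplication by the translate $b(x-\alpha v)$. I would handle the two sides separately.
\begin{itemize}
\item \emph{Input side.} When $\Delta_{j+k_2}f$ is tested against a smooth-at-scale-$2^{-j}$ kernel, one gains $2^{-k_2\delta}$. Concretely, write $\Delta_{j+k_2}$ as a difference of translates, move the translation $h$ of size $2^{-j-k_2}$ onto $v$, and invoke $\mathcal{B}_{\delta,3}$ to get $|h2^j|^\delta\|\varsigma_j\|_{\mathcal B_\delta}$.
\item \emph{Output side.} For $\Delta_{j+k_1}$, translating $x$ by $h$ also moves $b(x-\alpha v)$. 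To compensate, I would change variables $v\mapsto v+h/\alpha$ where $|\alpha|\gtrsim2^{-k_1/2}$. That shift is controlled by $\mathcal{B}_{\delta,3}$, with the $\alpha$-smoothness from $\mathcal{B}_{\delta,2}$ and the moments $\mathcal{B}_{\delta,1},\mathcal{B}_{\delta,4}$ absorbing the rescaling. The small-$\alpha$ region is treated using the decay supplied by $\mathcal{B}_{\delta,1}$-type integrability. This costs a factor in the exponent, which is why the conclusion only has $2^{-(k_1+k_2)c\delta}$ with some $c>0$ rather than $c=1$.
\end{itemize}
Combining these two gains with the same square-function and orthogonality summation in $j$ as in the crude bound yields the decaying estimate. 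Together with the crude bound this proves the stated minimum. This step mirrors the argument of \cite[Theorem 10.1]{MR3897981}, which I would follow for the precise bookkeeping.
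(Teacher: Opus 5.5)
The parts of your proposal that do not depend on decay are sound: the crude bound, the Cauchy--Schwarz/square-function summation in $j$, and the strong-convergence argument. The gap is the decay bound. This is not bookkeeping; the mechanism you describe cannot work.

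On the input side, write $\Delta_{j+k_2}f$ as an average of differences $f(\cdot-h)-f(\cdot)$ and substitute $v\mapsto v-h$ in $\iint\varsigma_j^{(2^j)}(\alpha,v)\,b(x-\alpha v)\,f(x-v-h)\,dv\,d\alpha$. This also replaces $b(x-\alpha v)$ by $b(x-\alpha v+\alpha h)$. $\mathcal{B}_{\delta,3}$ controls the change in $\varsigma_j$, but nothing controls the change in $b\in L^\infty$.

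On the output side, translate $x$ to $x+h$ and substitute $v\mapsto v+h/\alpha$. This restores $b(x-\alpha v)$, but it turns $F(x-v)$ into $F\bigl(x-v-\tfrac{1-\alpha}{\alpha}h\bigr)$, which you do not account for.

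The obstruction is structural. A change of variables $(x,v)\mapsto(x+u,v+w)$ translates the three arguments $x$, $x-v$, $x-\alpha v$ by $u$, $u-w$, $u-\alpha w$. For $\alpha\notin\{0,1\}$, two of these vanish only if all three do. So a translation can never be moved from one of $g,f,b$ onto the kernel without translating another rough function.

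In fact, no bound whose decaying factor is controlled by $\|\varsigma\|_{L^1}$, $\mathcal{B}_{\delta,1}$, $\mathcal{B}_{\delta,3}$, $\mathcal{B}_{\delta,4}$ alone can hold. Take $\varsigma(\alpha,v)=\epsilon^{-1}\chi_{[1/2,1/2+\epsilon]}(\alpha)\kappa(v)$ with $0\le\kappa\in C_c^\infty(B(0,1))$, $b(y)=e^{-4\pi i\langle\eta,y\rangle}$ and $f(y)=e^{2\pi i\langle\eta,y\rangle}\phi(y)$. Then
\[
W_0[\varsigma,b]f(x)=e^{-2\pi i\langle\eta,x\rangle}\,\epsilon^{-1}\int_{1/2}^{1/2+\epsilon}\int\kappa(v)\,e^{2\pi i(2\alpha-1)\langle\eta,v\rangle}\,\phi(x-v)\,dv\,d\alpha ,
\]
so $\|W_0[\varsigma,b]f\|_2\ge\|\kappa*\phi\|_2-C\epsilon|\eta|\,\|\phi\|_2$.

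There is therefore no decay in the input frequency as long as $\epsilon|\eta|\ll1$. Since the output sits at frequency $\approx-\eta$, $\Delta_k$ on either side does not help either. Yet $\|\varsigma\|_{L^1}$, $\|\varsigma\|_{\mathcal{B}_{\delta,1}}$, $\|\varsigma\|_{\mathcal{B}_{\delta,3}}$ and $\|\varsigma\|_{\mathcal{B}_{\delta,4}}$ stay bounded as $\epsilon\to0$; only $\|\varsigma\|_{\mathcal{B}_{\delta,2}}\approx\epsilon^{-\delta}$ grows. The decay must come from the $\alpha$-integration in an essential way, not as an auxiliary correction.

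Your instinct that $\mathcal{B}_{\delta,2}$ and the moments enter through a rescaling is right, but the rescaling actually available is the radial one. The substitution $v=v'/(1+\rho)$, $\alpha=(1+\rho)\alpha'$ turns $f(x-(1+\rho)v)$ back into $f(x-v')$ while leaving $g(x)$ and $b(x-\alpha v)$ untouched. The price is replacing $\varsigma$ by $(1+\rho)^{1-d}\varsigma\bigl((1+\rho)\alpha,v/(1+\rho)\bigr)$. This costs $O(\rho^{c\delta}\|\varsigma\|_{\mathcal{B}_\delta})$ and uses all four norms.

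That gives regularity only along $v$, with a $v$-dependent displacement. Converting it into $L^2$ decay for inputs localized at frequency $2^{j+k}$ in arbitrary directions, with $b$ merely bounded and $d\ge2$, requires a genuine $L^2$ argument of orthogonality or $TT^*$ type. That is the substance of \cite[Theorem 10.1]{MR3897981}, which the paper cites rather than proves; it cannot be deferred as bookkeeping once the wrong mechanism is in place.

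Two smaller points:
\begin{itemize}
\item The output side needs no separate treatment. $W_j[\varsigma,b]^*=W_j[\tilde\varsigma,\bar b]$ with $\tilde\varsigma(\alpha,v)=\overline{\varsigma(1-\alpha,-v)}$, whose $\mathcal{B}_\delta$ norms are comparable to those of $\varsigma$. So decay in $k_1$ follows from decay in $k_2$ applied to the adjoint.
\item $\mathcal{B}_{\delta,1}$ controls the mass at large $|\alpha|$, not small $|\alpha|$. Smallness of the mass on $\{|\alpha|<\rho\}$ comes from $\mathcal{B}_{\delta,2}$.
\end{itemize}
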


By Lemmata~\ref{lemma:estimate} and \ref{Lem8.1}, we have
\begin{align*}
\left\|
\sum_j\epsilon_j
\Delta_{j+k+l}W_j[\mu_{j,k},a]\Delta_{j+r}
\right\|_{L^2(\mathbb{R}^d)\to L^2(\mathbb{R}^d)}
\lesssim
\,2^{-c\delta(k+l+r)}.
\end{align*}
uniformly over signs $\epsilon_j\in \{-1,1\}$. Consequently, we get by Khinchine’s inequality
\begin{align*}
\left\|
\left(
\sum_j
\left|
\Delta_{j+k+l}W_j[\mu_{j,k},a]\Delta_{j+r}f
\right|^2
\right)^{1/2}
\right\|_{L^2(\mathbb{R}^d)}
&\lesssim
2^{-c\delta(k+l+r)}
\|f\|_{L^2(\mathbb{R}^d)}.
\end{align*}

Therefore, for fixed $k,\ell,r$, we have
\begin{align*}
\left\|
\sup_N
\left|
\Delta_{N+\ell}
W_{N-k}[\mu_{N-k,k},a]
\Delta_{N-k+r}f
\right|
\right\|_{L^2(\mathbb{R}^d)}
&\leq
\left\|
\left(
\sum_N
\left|
\Delta_{N+\ell}
W_{N-k}[\mu_{N-k,k},a]
\Delta_{N-k+r}f
\right|^2
\right)^{1/2}
\right\|_{L^2(\mathbb{R}^d)}\\
&=
\left\|
\left(
\sum_j
\left|
\Delta_{j+k+\ell}
W_j[\mu_{j,k},a]
\Delta_{j+r}f
\right|^2
\right)^{1/2}
\right\|_{L^2(\mathbb{R}^d)}\\
&\lesssim
2^{-c\delta(k+\ell+r)}\|f\|_{L^2(\mathbb{R}^d)}.
\end{align*}

Summing over $k,\ell \ \text{and}\  r$ we get the ${L^2(\mathbb{R}^d)} \to {L^2(\mathbb{R}^d)}$ boundedness of $\mathcal{H}_{1}^*[a]$.
\subsubsection{Estimate for $\mathcal{H}_{2}^*[a]$:}\label{subsubsec:H_1} We write
\begin{align*}
    \mathcal{H}_{2,N}[a]f
&=(I-P^{(N)})\sum_{k\geq0}
W_{N-k}[\mu_{N-k,k},(I-P^{(N-k)})a]P^{(N-k)}f\\
&=\sum_{k\geq0}\sum_{\ell\geq1}
\Delta_{N+\ell}
W_{N-k}[\mu_{N-k,k},(I-P^{(N-k)})a]
P^{(N-k)}f.
\end{align*}
We will employ the following estimate from \cite{MR3897981} and for a proof we refer to \cite[Theorem 11.1]{MR3897981}.
\begin{lemma}\label{Lem8.2}
Let $0<\delta<1$ and suppose that
$\sup_j\|\varsigma_j\|_{B_\delta(\mathbb{R}\times\mathbb{R}^d)}<\infty$ and $b\in L^\infty(\mathbb{R}^d)$. Let $k_1\in \mathbb{N}.$ Then $\sum_j
\Delta_{j+k_1}W_j[\varsigma_{j},(I-P^{(j)})b]P^{(j)}$
converges in the strong operator topology, as bounded operators on $L^2(\mathbb R^d)$. Moreover, there is $c>0$ such that
\begin{align*}
&\|\sum_j
\Delta_{j+k_1}W_j[\varsigma_{j},(I-P^{(j)})b]P^{(j)}\|_{L^2(\mathbb R^d)\to L^2(\mathbb R^d)}\\
&\qquad \lesssim
\sup_j\|\varsigma_j\|_{L^1(\mathbb{R}\times\mathbb{R}^d)}
\min\left\{2^{-k_1c\delta}\tfrac{\sup_j\|\varsigma_j\|_{\mathcal{B}_\delta(\mathbb{R}\times\mathbb{R}^d)}}{\sup_j\|\varsigma_j\|_{L^1(\mathbb{R}\times\mathbb{R}^d)}},\log^{\tfrac{3}{2}}(1+\tfrac{\sup_j\|\varsigma_j\|_{\mathcal{B}_\delta(\mathbb{R}\times\mathbb{R}^d)}}{\sup_j\|\varsigma_j\|_{L^1(\mathbb{R}\times\mathbb{R}^d)}})\right\}\|b\|_{L^\infty(\mathbb{R}^d)}.
\end{align*}
\end{lemma}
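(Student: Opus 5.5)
The plan is to treat the operator as a paraproduct and combine a square function with a Carleson measure. First expand
\[
(I-P^{(j)})b=\sum_{s\ge1}\Delta_{j+s}b
\]
and, for fixed $s\ge1$, study the piece
\[
\mathcal{T}_{k_1,s}:=\sum_j \Delta_{j+k_1}W_j[\varsigma_j,\Delta_{j+s}b]P^{(j)}.
\]
I will prove two estimates for each piece: a crude bound with at most polynomial-logarithmic growth, and a decaying bound of order $2^{-c\delta\max(k_1,s)}\sup_j\|\varsigma_j\|_{\mathcal{B}_\delta}$. Summing these over $s$ should give both entries of the $\min$. Strong convergence then follows from the uniform bounds by the usual argument: the estimates are uniform over finite subsums in $j$, so the partial sums are Cauchy on a dense class.

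\emph{Crude bound.} The outputs $\Delta_{j+k_1}(\cdot)$ are almost orthogonal in $j$, so
\[
\Big\|\sum_j\Delta_{j+k_1}F_j\Big\|_2\lesssim\Big\|\Big(\sum_j|F_j|^2\Big)^{1/2}\Big\|_2,\qquad F_j:=W_j[\varsigma_j,\Delta_{j+s}b]P^{(j)}f.
\]
Apply Cauchy--Schwarz in $(\alpha,v)$ with respect to the measure $|\varsigma_j^{(2^j)}|$:
\[
|F_j(x)|^2\le\|\varsigma_j\|_{L^1}\iint|\varsigma_j^{(2^j)}(\alpha,v)|\,|\Delta_{j+s}b(x-\alpha v)|^2\,|P^{(j)}f(x-v)|^2\,dv\,d\alpha .
\]
Integrate in $x$ and translate $x\mapsto x+v$. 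Rescaling $v=2^{-j}v'$, this reduces matters, for each fixed $(\alpha,v')$, to bounding
\[
\sum_j\int|\Delta_{j+s}b(x+(1-\alpha)2^{-j}v')|^2\,|P^{(j)}f(x)|^2\,dx .
\]
The measure $\sum_j|\Delta_{j+s}b(x+(1-\alpha)2^{-j}v')|^2\,dx\,\delta_{t=2^{-j}}$ is a shifted Carleson measure. Its Carleson norm is $\lesssim\|b\|_\infty^2\log(2+|1-\alpha||v'|)$, by the standard estimate for Carleson measures whose points are shifted by a multiple of their own scale. The Carleson embedding $\int\sup_t|P_tf|^2\,d\mu\lesssim\|\mu\|_{\mathcal C}\|f\|_2^2$ then controls this term. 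The logarithm is absorbed by the moments $(1+|\alpha|)^\delta$ and $(1+|v|)^\delta$ in $\mathcal{B}_{\delta,1}$ and $\mathcal{B}_{\delta,4}$, at a cost of a factor $\log^{1/2}$ of the ratio $\sup_j\|\varsigma_j\|_{\mathcal{B}_\delta}/\sup_j\|\varsigma_j\|_{L^1}$. This is done by splitting $(\alpha,v')$ at height $R$, where $R$ is a power of that ratio, and using Chebyshev on the tail.

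\emph{Decay in $k_1$.} Moving $\Delta_{j+k_1}$ across the convolution-type structure in $v$ means comparing $\varsigma_j(\alpha,v)$ with $\varsigma_j(\alpha,v+h)$ at $|h|\sim2^{-k_1}$ in rescaled variables. The $\mathcal{B}_{\delta,3}$ modulus then yields $2^{-k_1\delta}$. The low-frequency factor $P^{(j)}f$ is smooth at scale $2^{-j}$, and the symbol $\Delta_{j+s}b$ lies at frequency $2^{j+s}$; this term is only harmful when $s\approx k_1$.

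\emph{Decay in $s$.} Here the factor $\Delta_{j+s}b(x-\alpha v)$ oscillates in $\alpha$ at rate $2^{s}|v'|$. Averaging $\varsigma_j$ over $\alpha$-translates of size $2^{-s}/|v'|$ costs a $\mathcal{B}_{\delta,2}$ error of order $(2^{-s}/|v'|)^\delta$. After the translation, the averaged kernel integrates $\Delta_{j+s}b$ against an almost mean-zero profile, which gains $2^{-s}$. The region $|v'|\lesssim2^{-s/2}$ is handled by smallness from $\mathcal{B}_{\delta,3}$ and $\mathcal{B}_{\delta,1}$.

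Summing the decaying bounds over $s$ gives the $2^{-k_1c\delta}$ alternative in the $\min$. For the $\log^{3/2}$ alternative, use the crude bound for $s\le S$, with $S\approx\log(\sup_j\|\varsigma_j\|_{\mathcal{B}_\delta}/\sup_j\|\varsigma_j\|_{L^1})$, and the decaying bound for $s>S$. Each crude piece carries a $\log^{1/2}$ from the Carleson step, so summing $S$ of them produces the factor $\log^{3/2}$.

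I expect the main obstacle to be the $s$-decay step. The cancellation has to come from smoothness of $\varsigma$ in the $\alpha$ variable interacting with $b$ evaluated along the line $x-\alpha v$, and this degenerates as $|v|\to0$. Making the near-diagonal region compatible with the Carleson estimate is the delicate part, and I would first try a further dyadic decomposition in $|v'|$.
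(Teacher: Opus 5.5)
The paper proves nothing here; it quotes \cite[Theorem 11.1]{MR3897981}, so your outline has to stand on its own. Its architecture is sound. Write $\Gamma_0=\sup_j\|\varsigma_j\|_{L^1}$ and $\Gamma_1=\sup_j\|\varsigma_j\|_{\mathcal{B}_\delta}$. A per-$s$ bound $\Gamma_0\log^{1/2}(2+\Gamma_1/\Gamma_0)\|b\|_\infty$ for $s\le S\approx(c\delta)^{-1}\log(2+\Gamma_1/\Gamma_0)$, plus geometric decay in $s$ beyond $S$, does give $\log^{3/2}$.

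\emph{The gap is the decay in $s$.} Both alternatives of the $\min$ rest on it. For $s\approx k_1$ your $\mathcal{B}_{\delta,3}$ argument yields nothing, as you note, so the factor $2^{-k_1c\delta}$ must also come from here. You claim that $\alpha\mapsto\Delta_{j+s}b(x-\alpha2^{-j}v')$ oscillates at rate $2^s|v'|$. That is false: a frequency $\xi$ of $b$ produces rate $2^{-j}|\langle\xi,v'\rangle|$, which vanishes for $\xi\perp v'$. Take $b(y)=\cos(2\pi\langle y,\xi_0\rangle)$ with $|\xi_0|\sim2^{j+s}$ and $v'\perp\xi_0$. Then the function is constant in $\alpha$, and no averaging in $\alpha$ gains anything. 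So the exceptional set is not the ball $|v'|\lesssim2^{-s/2}$. It is a slab in $v'$ around $\xi^{\perp}$, whose orientation depends on the frequency of $b$. You cannot excise it in physical space without an angular decomposition of $\Delta_{j+s}b$, and your outline has none. Even in the best direction, averaging over windows of length $2^{-s}/|v'|$ (one wavelength) gives an $O(1)$ gain, not $2^{-s}$.

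\emph{Exploit oscillation in $v$ instead.} For fixed $\alpha$, the function $v'\mapsto\Delta_{j+s}b(x-\alpha2^{-j}v')\,P^{(j)}f(x-2^{-j}v')$ has $v'$-frequencies $2^{-j}(\alpha\xi+\eta)$, with $|\xi|\sim2^{j+s}$ and $|\eta|\lesssim2^j$. These have size $\gtrsim|\alpha|2^s$ in every direction. Mollify $\varsigma_j(\alpha,\cdot)$ at scale $r=2^{-s/4}$, which costs $r^\delta\Gamma_1$ in $L^1$ by $\mathcal{B}_{\delta,3}$, and move the mollifier onto the product.
\begin{itemize}
\item The $b$-factor becomes an average of $\Delta_{j+s}b$ at scale $r|\alpha|2^{-j}$. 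This is $O\big((r|\alpha|2^s)^{-2}\|b\|_\infty\big)$, because the kernel of $\Delta_{j+s}$ has vanishing moments of order $\le1$ ($\varPhi$ is radial).
\item The $f$-factor changes by $r$ times $|2^{-j}\nabla P^{(j)}f|$ at nearby points.
\item The only degenerate region is $|\alpha|\le2^{-s/2}$. The mass of $\varsigma_j$ there is $\lesssim2^{-s\delta/(2+2\delta)}\Gamma_1$. Indeed, $\mathcal{B}_{\delta,2}$ makes $F(\alpha)=\|\varsigma_j(\alpha,\cdot)\|_{L^1}$ H\"older continuous in $L^1(d\alpha)$, and $\|F\|_{L^1}\le\Gamma_0\le\Gamma_1$.
\end{itemize}
Feeding each error back into your crude square-function/Carleson bound gives $\|\mathcal{T}_{k_1,s}\|\lesssim s^{1/2}2^{-cs\delta}\Gamma_1\|b\|_\infty$, and your summation scheme then closes.

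\emph{A repair to the crude bound.} The weights $|\varsigma_j(\alpha,v')|$ depend on $j$, and $\sup_j|\varsigma_j|$ need not be integrable. So you cannot freeze $(\alpha,v')$ and then sum in $j$. Instead, check the Carleson condition directly for $\sum_j w_j(x)\,dx\,\delta_{t=2^{-j}}$, where $w_j(x)=\iint|\varsigma_j(\alpha,v')|\,|\Delta_{j+s}b(x+(1-\alpha)2^{-j}v')|^2\,d\alpha\,dv'$. On the part with $|(1-\alpha)v'|\le R$, use the $L^\infty$ bound on the $\approx\log R$ scales where the shift exceeds $\ell(Q)$, and the BMO Carleson bound on $3Q$ for the remaining scales. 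Treat the part with $|(1-\alpha)v'|>R$ dyadically. This gives Carleson norm $\lesssim\Gamma_0\log(2+\Gamma_1/\Gamma_0)\|b\|_\infty^2$, as you need.
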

We set $A_k=\sup_{j\in\Z}\|\mu_{j,k}\|_1$ and $B_k=\sup_{j\in\Z}\|\mu_{j,k}\|_{\mathcal{B}_\delta}$. Thus by Lemmata~\ref{lemma:estimate} and \ref{Lem8.2}, we have 
\begin{align*}
\left\|
\sum_j\epsilon_j
\Delta_{j+k+l}W_j[\mu_{j,k},(I-P^{(j)})a]P^{(j)}
\right\|_{L^2(\mathbb{R}^d)\to L^2(\mathbb{R}^d)}
&\lesssim
A_k\min\left\{
2^{-c\delta (k+l)}\tfrac{B_k}{A_k},\log^{\tfrac{3}{2}}(1+\tfrac{B_k}{A_k})
\right\}\\
&\lesssim A_k (2^{-c\delta (k+l)}\tfrac{B_k}{A_k})^{\tfrac{1}{4}}\log^{\tfrac{9}{4}}(1+\tfrac{B_k}{A_k})\\
&\lesssim
\,2^{-\tfrac{c}{4}\delta (k+l) } A_k^{\tfrac{1}{2}}B_k^{\tfrac{1}{2}}\\
&\lesssim 2^{-\tfrac{c}{4}\delta (k+l) }.
\end{align*}
Now the ${L^2(\mathbb{R}^d)} \to {L^2(\mathbb{R}^d)}$ boundedness of $\mathcal{H}_{2}^*[a]$ follows similarly as previously. 
%%%%%%%%%%%%%%%%%%%%%
\subsubsection{Estimate for $\mathcal{L}^*[a]$:}For $j,k\in \mathbb{Z}$, we define 

\[\beta_{j,k}:=\int_{\mathbb{R}}\int_{\mathbb{R}^d}\mu_{j,k}(\alpha,v)d\alpha\,dv\quad\text{and}\quad\eta_{j,k}(\alpha,v):=\tfrac{\beta_{j,k}}{|B(0,1)|}\chi_{[0,1]}(\alpha)\chi_{B(0,1)}(v).\]
Using the mean-zero property (\ref{2.3}) we get 
\begin{align*}
    |\beta_{N-k,k}|&=\Big|\int_{\mathbb{R}}\int_{\mathbb{R}^d}\zeta_{N-k,k}(\alpha,v)(\Psi(2^k v)-1))d\alpha\,dv\Big|\\
    &=\Big|\int_{\mathbb{R}^d}[(\varPhi^{(2^{N-k})}-\varPhi^{(2^{N-k-1})})*K](v)(\Psi(2^N v)-1))\,dv\Big|\\
    &\lesssim 2^{(N-k)d}\int_{|v|\leq 2^{-N+1}}|\Psi(2^Nv)-1|\,dv\lesssim 2^{-kd}.\tag{2.5}\label{2.5}
\end{align*}
First, note that for any $j\in \mathbb{Z}$, we have that $(\varPhi^{(2^{j})}-\varPhi^{(2^{j-1})})*K \in L^1(\mathbb{R}^d)$. This is easy to verify using the Fubini's theorem and the fact that $\zeta_j \in L^1(\mathbb R \times \mathbb R ^d)$. Also, note that the Fourier transform of $(\varPhi^{(2^{j})}-\varPhi^{(2^{j-1})})*K$ lies in $L^1(\mathbb{R}^d).$ Consequently, we have the pointwise estimate 
$$|[(\varPhi^{(2^{j})}-\varPhi^{(2^{j-1})})*K](x)|\lesssim 2^{jd}.$$

We also observe that $\|\eta_{N-k,k}\|_{\mathcal{B}_\delta(\mathbb R \times \mathbb R ^d)}\lesssim_\delta|\beta_{N-k,k}|\lesssim 1.$ Estimates for $\|\eta_{N-k,k}\|_{\mathcal{B}_{\delta,i}(\mathbb R \times \mathbb R ^d)}$ for $i=1,2,4$ are easy to verify. For $i=3$, we consider 
\begin{align*}
\|\eta_{N-k,k}\|_{\mathcal{B}_{\delta,3}(\mathbb R \times \mathbb R ^d)}&=\tfrac{|\beta_{N-k,k}|}{|B(0,1)|}\sup_{0<|h|\le1}|h|^{-\delta}\int_{\mathbb{R}}\int_{\mathbb{R}^d}|\chi_{B(0,1)-h}(v)-\chi_{B(0,1)}(v)|\chi_{[0,1]}(\alpha)\,d\alpha\,dv\\
&\leq |\beta_{N-k,k}|\sup_{0<|h|\le1}|h|^{-\delta}\int_{1-|h|\leq|v|\leq 1+|h|}dv\\
&\lesssim |\beta_{N-k,k}|.  
\end{align*}
Next, we define $\tau_{N-k,k}:=\mu_{N-k,k}-\eta_{N-k,k}.$ Note that \[\int_{\mathbb{R}}\int_{\mathbb{R}^d}\tau_{N-k,k}(\alpha,v)\,d\alpha \,dv=0.\tag{2.6}\label{2.6}\]
Then \[\mathcal{L}_N[a]f=\mathcal{L}_{1,N}[a]f+\mathcal{L}_{2,N}[a]f,\]
 where 
 \begin{align*}
\mathcal{L}_{1,N}[a]f&:=(I-P^{(N)})\sum_{k\geq0}
W_{N-k}[\eta_{N-k,k},P^{(N-k)}a]P^{(N-k)}f\\ 
\mathcal{L}_{2,N}[a]f&:=(I-P^{(N)})\sum_{k\geq0}
W_{N-k}[\tau_{N-k,k},P^{(N-k)}a]P^{(N-k)}f.
 \end{align*}
For the term $\mathcal{L}_{1,N}[a]f$, we have
\begin{align*}
    \Big|\sum_{k\geq0}
W_{N-k}[\eta_{N-k,k},P^{(N-k)}a]P^{(N-k)}f(x) \Big|&\leq \sum_{k \ge 0}\int_{\mathbb{R}}\int_{\mathbb{R}^d}2^{(N-k)d}\tfrac{|\beta_{N-k,k}|}{|B(0,1)|}\chi_{[0,1]}(\alpha)\chi_{B(0,1)}(2^{N-k}v)\\
&\qquad|P^{(N-k)}a(x-\alpha v)||P^{(N-k)}f(x-v)|\,d\alpha\,dv\\
& \lesssim \sum_{k\ge 0}2^{-kd}2^{(N-k)d}\int_{|v|\le2^{-(N-k)}}Mf(x-v)dv\\
&\lesssim M(Mf)(x).
\end{align*}
Thus \[\sup_{N\in \mathbb{Z}}|\mathcal{L}_{1,N}[a]f(x)|\lesssim M(M(Mf)))(x),\] and hence the proof follows for this part.

For the remaining part we write
\begin{align*}
    \mathcal{L}_{2,N}[a]f&=(I-P^{(N)})\sum_{k\geq0}
W_{N-k}[\tau_{N-k,k},P^{(N-k)}a]P^{(N-k)}f\\
&=\sum_{k\ge0}\sum_{l\ge 1}\Delta_{N+l}W_{N-k}[\tau_{N-k,k},P^{(N-k)}a]P^{(N-k)}f.
\end{align*}
It is natural to analyze the operator $\Delta_{j+s}W_{j}[\tau_{j,k},P^{(j)}a]P^{(j)}$ by considering the corresponding trilinear form as follows. 
\[\Lambda[\tau^{(2^j)}_{j,k}](P^{(j)}a,P^{(j)}f,\Delta_{j+s}g):=\int_{\mathbb R}\int_{\mathbb R ^d}\int_{\mathbb R ^d}\tau^{(2^j)}_{j,k}(\alpha,v)\Delta_{j+s}g(x)P^{(j)}f(x-v)P^{(j)}a(x-\alpha v)\,dx \,dv \, d\alpha.\]
Using $\lim_{n\to \infty}P^{(n)}b=b$ a.e. for $b \in L^\infty(\mathbb{R}^d)$ and that $\left\{\varPhi^{(n)}:n\in \mathbb{N}\right\}$ is an approximation of the identity, using the dominated convergence theorem, we deduce that,
\[\tag{2.7}\label{2.7}\lim_{M\to \infty }\Lambda[\tau^{(2^j)}_{j,k}](P^{(j+M)}P^{(j)}a,P^{(j+M)}P^{(j)}f,\Delta_{j+s}g)=\Lambda[\tau^{(2^j)}_{j,k}](P^{(j)}a,P^{(j)}f,\Delta_{j+s}g).\]
On the other hand using the mean-zero property (\ref{2.6}) and the fact that $\lim_{n\to -\infty}P^{(n)}h=0$  for $h \in L^p(\mathbb{R}^d),\; 1<p<\infty$, one deduces that 
\[\tag{2.8}\label{2.8}\lim_{M\to \infty }\Lambda[\tau^{(2^j)}_{j,k}](P^{(j-M)}P^{(j)}a,P^{(j-M)}P^{(j)}f,\Delta_{j+s}g)=\Lambda[\tau^{(2^j)}_{j,k}](P^{(j)}a,P^{(j)}f,\Delta_{j+s}g).\]

Using estimates (\ref{2.7}),(\ref{2.8}) and the bi-linearity property, we obtain
\begin{align*}
    \Lambda[\tau^{(2^j)}_{j,k}](P^{(j)}a,P^{(j)}f,\Delta_{j+s}g)&=\lim_{M \to \infty}\Big[\Lambda[\tau^{(2^j)}_{j,k}](P^{(j+M)}P^{(j)}a,P^{(j+M)}P^{(j)}f,\Delta_{j+s}g)\\
	&\hspace{1.5cm}-\Lambda[\tau^{(2^j)}_{j,k}](P^{(j-M)}P^{(j)}a,P^{(j-M)}P^{(j)}f,\Delta_{j+s}g)\Big]\\
    &=\lim_{M \to \infty}\sum_{m=-M+1}^{M}[\Lambda[\tau^{(2^j)}_{j,k}](P^{(j+m)}P^{(j)}a,P^{(j+m)}P^{(j)}f,\Delta_{j+s}g)-\\
    & \qquad\qquad\Lambda[\tau^{(2^j)}_{j,k}](P^{(j+m-1)}P^{(j)}a,P^{(j+m-1)}P^{(j)}f,\Delta_{j+s}g)]\\
    &=\lim_{M \to \infty}\sum_{m=-M+1}^{M}[\Lambda[\tau^{(2^j)}_{j,k}](\Delta_{j+m}P^{(j)}a,P^{(j+m)}P^{(j)}f,\Delta_{j+s}g)+\\
    & \qquad\Lambda[\tau^{(2^j)}_{j,k}](P^{(j+m-1)}P^{(j)}a,\Delta_{j+m}P^{(j)}f,\Delta_{j+s}g)]\\
    &=\sum_{m \in \mathbb{Z}}\Lambda[\tau^{(2^j)}_{j,k}](\Delta_{j+m}P^{(j)}a,P^{(j+m)}P^{(j)}f,\Delta_{j+s}g)+\\
    & \qquad\sum_{m \in \mathbb{Z}}\Lambda[\tau^{(2^j)}_{j,k}](P^{(j+m-1)}P^{(j)}a,\Delta_{j+m}P^{(j)}f,\Delta_{j+s}g)\\
    &=\sum_{m \in \mathbb{Z}}\Lambda[\tau^{(2^j)}_{j,k}](\Delta_{j+m}P^{(j)}a,P^{(j+m)}P^{(j)}f,\Delta_{j+s}g)+\\
    &\qquad \sum_{m \in \mathbb{Z}}\Lambda[\tilde{\tau}^{(2^j)}_{j,k}](\Delta_{j+m}P^{(j)}f,P^{(j+m-1)}P^{(j)}a,\Delta_{j+s}g),\tag{2.9}\label{2.9}
\end{align*}
where $\tilde{\tau}_{j,k}(\alpha,v):=|\alpha|^{d-2}\tau_{j,k}(\alpha^{-1},\alpha v)$,  with $\|\tilde{\tau}_{j,k}\|_{\mathcal{B}_{\delta_1}(\mathbb R \times \mathbb R ^d)}\lesssim \|\tau_{j,k}\|_{\mathcal{B}_\delta(\mathbb R \times \mathbb R ^d)}$, $0<\delta_{1}<\delta/3$ and $\|\tilde{\tau}_{j,k}\|_{L^1(\mathbb R \times \mathbb R ^d)}=\|\tau_{j,k}\|_{L^1(\mathbb R \times \mathbb R ^d)}$. %Here we are just permuting the first and second entries, see Theorem 2.9. in \cite{MR3897981}.

For $b\in L^\infty(\mathbb{R}^d)$ and for $m_1\in \mathbb Z,m_2\in \mathbb N$ we define
\begin{align*}
  \big\langle \mathfrak{V}^{m_1,m_2}_{j}[b]f,g\big\rangle
 & := \Lambda[\varsigma_j^{(2^j)}](\Delta_{j+m_1}P^{(j)}b,\,P^{(j+m_1)}P^{(j)}f,\,\Delta_{j+m_2}g),\\
 \big\langle \mathfrak{U}^{m_1,m_2}_{j}[b]f,g\big\rangle&:=\Lambda[\varsigma^{(2^j)}_{j}](\Delta_{j+m_1}P^{(j)}f,P^{(j+m_1-1)}P^{(j)}a,\Delta_{j+m_2}g).
\end{align*}
Then we have the following Lemma.
\begin{lemma}\label{Lem8.3}
(i) $\sum_{j\in \mathbb Z}\mathfrak{U}^{m_1,m_2}_{j}[b]$ converges in the strong operator topology, as bounded operators on $L^2(\mathbb R^d)$. Moreover, there is $\delta ''>0$ such that\[\left\|\sum_{j\in \mathbb Z}\mathfrak{U}^{m_1,m_2}_{j}[b]\right\|_{L^2(\mathbb{R}^d)\to L^2(\mathbb{R}^d)}\lesssim \min\{2^{-\delta''(|m_1|+m_2)}\sup_j\|\varsigma_j\|_{\mathcal{B}_{\delta'}(\mathbb{R}\times\mathbb{R}^d)},\sup_j\|\varsigma_j\|_{L^1(\mathbb{R}\times\mathbb{R}^d)}\ \}\|b\|_{L^\infty(\mathbb R^d)}.\tag{2.10}\label{2.10}\]
    \\(ii) $\sum_{j\in \mathbb Z}\mathfrak{V}^{m_1,m_2}_{j}[b]$ converges in the strong operator topology, as bounded operators on $L^2(\mathbb R^d)$. Moreover, there is $\delta ''>0$ such that\begin{align*}
    &\left\|\sum_{j\in \mathbb Z}\mathfrak{V}^{m_1,m_2}_{j}[b]\right\|_{L^2(\mathbb{R}^d)\to L^2(\mathbb{R}^d)}\\
    & \qquad \lesssim \min\{2^{-\delta''(|m_1|+m_2)}\sup_j\|\varsigma_j\|_{\mathcal{B}_{\delta'}(\mathbb{R}\times\mathbb{R}^d)},\sup_j\|\varsigma_j\|_{L^1(\mathbb{R}\times\mathbb{R}^d)}\log(1+\tfrac{\sup_j\|\varsigma_j\|_{\mathcal{B}_{\delta'}(\mathbb{R}\times\mathbb{R}^d)}}{\sup_j\|\varsigma_j\|_{L^1(\mathbb{R}\times\mathbb{R}^d)}}) \}\|b\|_{L^\infty(\mathbb R^d)}.\tag{2.11}\label{2.11}
    \end{align*}

\end{lemma}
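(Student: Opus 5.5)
This lemma is essentially a restatement of the paraproduct estimates of \cite{MR3897981} (cf. their Sections 10--11 and the ``permutation'' Theorem 2.9 there). The plan is therefore to reduce each part to a fixed-scale trilinear form estimate, and then sum over $j$ using orthogonality. We may assume $\|b\|_{L^\infty}=1$.

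\textbf{Strong convergence.} For finite sums we would use almost orthogonality. The output of $\mathfrak{U}^{m_1,m_2}_j[b]$ and $\mathfrak{V}^{m_1,m_2}_j[b]$ is frequency localized by $\Delta_{j+m_2}$. The input $f$ enters through $\Delta_{j+m_1}P^{(j)}f$ in (i) and through $P^{(j+m_1)}P^{(j)}f$ in (ii). Hence the Cotlar--Stein lemma, or a direct square-function argument, reduces both the uniform bound and strong convergence to a single-scale bound of the form
\[
|\langle \mathfrak{U}^{m_1,m_2}_{j}[b]f,g\rangle| \lesssim \min\{2^{-\delta''(|m_1|+m_2)}\|\varsigma_j\|_{\mathcal{B}_{\delta'}},\|\varsigma_j\|_{L^1}\}\,\|\widetilde{\Delta}_{j+m_1}f\|_2\|\widetilde{\Delta}_{j+m_2}g\|_2 ,
\]
where $\widetilde{\Delta}$ denotes slightly enlarged Littlewood--Paley pieces. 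This is followed by Cauchy--Schwarz in $j$. For (i) this is exactly the structure needed. For (ii) the factor $b$ carries the frequency $\Delta_{j+m_1}P^{(j)}b$, so only $g$ is localized. There we would instead use a Carleson measure argument: $\{|\Delta_{j+m_1}P^{(j)}b|^2\}_j$ is a Carleson sequence with constant $\lesssim\|b\|_\infty^2$. This yields the bound with $\|\varsigma_j\|_{L^1}$ times a logarithmic loss coming from the sum over the $O(\log(\|\varsigma\|_{\mathcal{B}}/\|\varsigma\|_{L^1}))$ scales where the decay is not yet effective, which explains the $\log$ in \eqref{2.11}.

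\textbf{Decay.} The trivial bound by $\|\varsigma_j\|_{L^1}$ follows from Minkowski's inequality in $(\alpha,v)$ and the boundedness of $\Delta$ and $P$. For the $2^{-\delta''(|m_1|+m_2)}$ decay we argue in three cases.
\begin{itemize}
\item When $m_2$ is large: the factors $P^{(j)}b$ and $P^{(j)}f$ are smooth at scale $2^{-j}$. We move $\Delta_{j+m_2}$ onto the kernel and exploit the Hölder regularity of $\varsigma_j$ in $v$, namely $\mathcal{B}_{\delta',3}$, which gives a gain $2^{-m_2\delta'}$.
\item When $m_1\ge 0$ is large: the function at frequency $2^{j+m_1}$ is composed with $P^{(j)}$, so $\Delta_{j+m_1}P^{(j)}$ has norm $\lesssim 2^{-Nm_1}$ by Fourier support. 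This case is essentially trivial.
\item When $m_1<0$: we use the mean-zero property \eqref{2.6} of the kernel together with the smoothness of the low-frequency factor. This gives $2^{m_1}$ times the moments controlled by $\mathcal{B}_{\delta',1}$ and $\mathcal{B}_{\delta',4}$.
\end{itemize}
Interpolating the decay bound with the trivial $L^1$ bound gives the stated minimum.

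\textbf{Main obstacle.} The hardest step is the $m_2$-decay in part (ii), where the high-frequency output interacts with $b(x-\alpha v)$ and $\alpha$ may be small or large. For this I would first try the argument of \cite[Theorem 11.1]{MR3897981}, i.e.\ Lemma \ref{Lem8.2}: use $\mathcal{B}_{\delta',2}$ regularity in $\alpha$ to handle the dilation structure, and reduce via the change of variables $\tilde\tau$ used in \eqref{2.9}. If that becomes too technical, I would simply cite the corresponding statements in \cite{MR3897981} and verify that the hypotheses hold by Lemma \ref{lemma:estimate}.
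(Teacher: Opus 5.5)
\textbf{Citation.} The paper does not prove this lemma; it quotes Propositions 13.4 and 13.5 of \cite{MR3897981}. Your fallback of citing \cite{MR3897981} is in that spirit, but you point to the wrong results. Theorems 10.1 and 11.1 there (Lemmas~\ref{Lem8.1} and~\ref{Lem8.2} here) treat $\Delta_{j+k_1}W_j[\varsigma_j,b]\Delta_{j+k_2}$ and $\Delta_{j+k_1}W_j[\varsigma_j,(I-P^{(j)})b]P^{(j)}$. That is, they handle a high-frequency input or the high-frequency part of the symbol. The operators $\mathfrak{U}^{m_1,m_2}_j$ and $\mathfrak{V}^{m_1,m_2}_j$ instead come from the low--low term $W_j[\tau_{j,k},P^{(j)}a]P^{(j)}$ after the telescoping \eqref{2.9}, and those theorems do not cover them. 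As a self-contained argument, the $L^1$ half of \eqref{2.10} is fine as you describe: Minkowski in $(\alpha,v)$, then Cauchy--Schwarz in $j$ using $\sum_j\|\Delta_{j+m_1}P^{(j)}f\|_2^2\lesssim\|f\|_2^2$ and $\sum_j\|\Delta_{j+m_2}g\|_2^2\lesssim\|g\|_2^2$.

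\textbf{The gap in (ii).} It sits at the step you treat as routine. Since $P^{(j+m_1)}P^{(j)}f$ is not square-summable in $j$, both halves of \eqref{2.11} rest on a Carleson embedding. However, the three functions are evaluated at $x$, $x-v$ and $x-\alpha v$, with $v=2^{-j}w$, $\alpha\in\R$ unbounded, and $w$ not compactly supported. After $x\mapsto x+2^{-j}w$ and Cauchy--Schwarz against $\sum_j\|\Delta_{j+m_2}g\|_2^2$, you need the Carleson property of the shifted sequence $\{|\Delta_{j+m_1}P^{(j)}b(\cdot-c\,2^{-j})|^2\}_j$ with $c=(\alpha-1)w$, not of $\{|\Delta_{j+m_1}P^{(j)}b|^2\}_j$ itself. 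The Carleson norm of the shifted sequence grows logarithmically in $|c|$: on roughly $\log_2(2+|c|)$ scales below $\ell(Q)$, the translate $Q-c\,2^{-j}$ leaves $2Q$, and on those scales you only have the trivial bound $\|b\|_\infty^2|Q|$. Integrating this loss against $|\varsigma_j(\alpha,w)|$ uses the $\delta'$-moments in $\mathcal{B}_{\delta',1}$ and $\mathcal{B}_{\delta',4}$. You split at $|\alpha|+|w|\approx R$, with $R$ a power of $\|\varsigma\|_{\mathcal{B}_{\delta'}}/\|\varsigma\|_{L^1}$, and this produces the factor $\log(1+\|\varsigma\|_{\mathcal{B}_{\delta'}}/\|\varsigma\|_{L^1})$. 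Your explanation (a sum over $O(\log)$ scales on which the decay is not yet effective) cannot be right. For fixed $(m_1,m_2)$ no such sum occurs inside the estimate; that mechanism only appears when one later sums over $m_1,m_2$.

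\textbf{Misplaced difficulty and smaller errors.} The $m_2$-decay you call the main obstacle is the easy part. Both inputs are $P^{(j)}$-regularized, so the bilinear expression $H_j$ paired with $\Delta_{j+m_2}g$ is smooth at scale $2^{-j}$. Then $\|\Delta_{j+m_2}H_j\|_2\lesssim 2^{-j-m_2}\|\nabla H_j\|_2$ gives the factor $2^{-m_2}$ at each scale using only $\|\varsigma_j\|_{L^1}$.
\begin{itemize}
\item Your mechanism for the $m_2$-decay does not work as stated. Moving $\Delta_{j+m_2}$ off $g$ translates both inputs by the same $h$, whereas a $v$-translation of $\varsigma_j$ moves them by $h$ and $\alpha h$. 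So $\mathcal{B}_{\delta',3}$ cannot absorb it.
\item Since $\varPhi$ is compactly supported in space, there is no Fourier-support argument. The operator $\Delta_{j+m_1}P^{(j)}$ is only $O(2^{-2m_1})$, which still suffices.
\item For $m_1<0$ you invoke \eqref{2.6}. That is not a hypothesis of the lemma, and it is not needed: both inputs are smooth at scale $2^{-(j+m_1)}$ while the output is localized at $2^{j+m_2}$ with $m_2\ge 1$, which already gives $2^{m_1-m_2}$.
\end{itemize}
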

For a proof of the result above see Proposition 13.4. and proposition 13.5. in \cite{MR3897981}.

Finally note that 

\begin{align*}
    \mathcal{L}_{2,N}[a]f
&=\sum_{k\ge0}\sum_{l\ge 1}\Delta_{N+l}W_{N-k}[\tau_{N-k,k},P^{(N-k)}a]P^{(N-k)}f\\
&=\sum_{m \in \mathbb Z}\sum_{k\ge0}\sum_{l\ge 1}\mathfrak{U}^{m,k+l}_{N-k,k}[a]f +\sum_{m \in \mathbb Z}\sum_{k\ge0}\sum_{l\ge 1}\mathfrak{V}^{m,k+l}_{N-k,k}[a]f .
\end{align*}
Using the facts that for any $k\ge 0$ , $\sup_{j\in \mathbb Z}\|\tau_{j,k}\|_{\mathcal{B}_\delta(\mathbb R \times \mathbb R ^d)}\lesssim 1$, $\sup_{j\in \mathbb Z}\|\tilde{\tau}_{j,k}\|_{\mathcal{B}_{\delta_1}(\mathbb R \times \mathbb R ^d)}\lesssim 1$,(\ref{2.10}) and (\ref{2.11}), we get
uniformly over signs $\epsilon_j\in \{-1,1\}$,
\begin{align*}
\left\|
\sum_j\epsilon_j\mathfrak{U}^{m,s}_{j,k}[a]\right\|_{L^2(\mathbb{R}^d)\to L^2(\mathbb{R}^d)}
&\lesssim 2^{-\delta'(|m|+s)}
,\\
\left\|
\sum_j\epsilon_j\mathfrak{V}^{m,s}_{j,k}[a]\right\|_{L^2(\mathbb{R}^d)\to L^2(\mathbb{R}^d)}
&\lesssim 2^{-\delta_1'(|m|+s)},
\end{align*}
and the proof now follows exactly similarly as before.

\section{Weak type $(1,1)$ boundedness of $T^*[a]$: Proof of \Cref{Maintheorem}\eqref{thm:weaktype}}\label{sec:proofmain}
	
	We may assume that $A=1$, $\|a\|_{L^\infty(\mathbb{R}^d)}=1$. By the Calder\'on--Zygmund decomposition applied to $f$ at height $\alpha$, there exists a family $\mathcal{Q}$ of disjoint dyadic cubes such that:
	
	\begin{enumerate}
		\item $f = g + b$ with $b = \sum_{s \in \mathbb{Z}} B^s,\quad \text{where } B^s = \sum_{Q \in \mathcal{Q}:|Q|=2^sd} b_Q. $
		
		\item 
		$\|g\|_{L^2(\mathbb{R}^d)}^2 \lesssim \alpha\|f\|_{L^1(\mathbb{R}^d)}$.

		\item 
		$
		\int b_Q(x)\,dx = 0$ 
		and
		$\operatorname{supp}(b_Q) \subset Q. $

		\item 
		$
		\|b_Q\|_{L^1(\mathbb{R}^d)} \lesssim \alpha |Q|$,
		and
		$\sum_{Q \in \mathcal{Q}} |Q| \leq \frac{1}{\alpha} \|f\|_{L^1(\mathbb{R}^d)}. $
	\end{enumerate}
	By the $L^2-$ boundedness of $T^*[a]$, the desired estimate for the good part follows immediately. 
	\[
	\left| \{ x \in \mathbb{R}^d : |T^*[a]g(x)| >  \frac{\alpha}{2}\} \right|
	\lesssim \frac{1}{\alpha^2} \|g\|_{L^2(\mathbb{R}^d)}^2
	\lesssim \frac{1}{\alpha}\|f\|_{L^1(\mathbb{R}^d)}. 
	\]
	We define the exceptional set $E := \bigcup_{Q \in \mathcal{Q}} Q^*$,
	where $Q^*$ has the same center as $Q$ and sidelength $\ell(Q^*) = 2^{10} \ell(Q)$. 
	Then,
	\[
	|E| \leq \sum_{Q \in \mathcal{Q}} |Q^*| 
	\lesssim \frac{1}{\alpha} \|f\|_{L^1(\mathbb{R}^d)}. 
	\]
	Recall the discretization of the operator from equation \ref{discritazation} in Section~\ref{sec:proofL2}, we have 
	\[\tag{3.1} 
	\label{3.1}
	T^*[a] b(x)\leq Mb(x)+T_{dis}[a] b(x),\]
	where the operator $T_{dis}[a]$ is defined as
	\[T_{dis}[a] f(x) := \sup_{k \in \mathbb{Z}} \left| \sum_{j > k} T_jf(x) \right|.\]
    
    Therefore, it is enough to estimate the following.
	\[\left| \left\{ x \in \mathbb{R}^d \setminus E :
	|T_{dis}b(x)\big| > {\alpha}\right\} \right|.\]

	By support considerations, it follows that for $x\in E^c$, we have
	\begin{align*}
		\sum_{s\leq1} \sum_{j \in\mathbb{Z}}\int K_j(x-y) \int_0^1 a(t x+(1-t) y) d t\, B^{j-s}(y) d y=0.
	\end{align*}
	Let $s(\varepsilon) := 10^{10} d\, \varepsilon^{-1} \log_2(2\varepsilon^{-1})$. For each $1\leq s \leq s(\varepsilon)$, we have
	\begin{align*}\tag{3.2}\label{3.2}
		\sum_{j \in \mathbb{Z}} \|T_jB^{j-s}\|_{L^1(\mathbb{R}^d)}
		&\lesssim \sum_{j \in \mathbb{Z}}\|K_j\|_{L^1(\mathbb{R}^d)} \|B^{j-s}\|_{L^1(\mathbb{R}^d)}\lesssim \|f\|_{L^1(\mathbb{R}^d)}. 
	\end{align*}
	Thus, by Chebyshev's inequality, we have that
	\[\tag{3.3}
	\label{3.3}
	\left| \left\{ x \in \mathbb{R}^d \setminus E :
	\sum_{1\leq s\leq s(\varepsilon)} \sum_{j \in \mathbb{Z}} |T_j B^{j-s}(x)| >  \frac{\alpha}{4}\right\} \right|
	\lesssim \frac{1}{\alpha}\|f\|_{L^1(\mathbb{R}^d)}.  \]
	Hence, it suffices to show that 
	\[\tag{3.4}
	\label{3.4}
	\left| \left\{ x \in \mathbb{R}^d \setminus E :
	\sum_{s>s(\varepsilon)}\sup_{k \in \mathbb{Z}} \big|\sum_{j>k} T_j B^{j-s}(x)\big| > \frac{\alpha}{4}\right\} \right|
	\lesssim \frac{1}{\alpha}\|f\|_{L^1(\mathbb{R}^d)}.  \]
	For a fixed $\delta > 0$ to be determined later, we choose $C>0$ such that 
	$C\sum_{s>s(\varepsilon)} \frac{1}{s^{1+\delta}} = \frac{1}{4}$. Then, a straightforward pigeon-holing argument implies that
	\begin{align*}
		&\left| \left\{ x \in \mathbb{R}^d \setminus E :
		\sum_{s>s(\varepsilon)}\sup_{k \in \mathbb{Z}}\big|\sum_{j>k} T_j B^{j-s}(x)\big| > \frac{\alpha}{4}\right\} \right|\\
		&\leq\sum_{s>s(\varepsilon)}\left| \left\{ x \in \mathbb{R}^d \setminus E :\sup_{k \in \mathbb{Z}}\big|\sum_{j>k} T_j B^{j-s}(x)\big|
		> \frac{C}{s^{1+\delta}} \,\alpha \right\} \right|. 
	\end{align*}
	Let $s > s(\varepsilon)$ be fixed. We set $	\mathcal{C}_{j,m} = 2^j([0,1]^d + m)$ where $m \in \mathbb{Z}^d$ and $j \in \mathbb{Z}. $ 
	Now,
	\[
	B^{j-s} 
	= \sum_{\substack{Q \in \mathcal{Q}:\\|Q| = 2^{(j-s)d}}} b_Q
	= \sum_{m \in \mathbb{Z}^d} \sum_{\substack{Q \in \mathcal{Q}:|Q| = 2^{(j-s)d} \\ Q \subset \mathcal{C}_{j,m}}} b_Q
	= \sum_{m \in \mathbb{Z}^d} B^{j-s,m},
	\]
	where
	\[
	B^{j-s,m} 
	:= \sum_{\substack{Q \in \mathcal{Q}:|Q| = 2^{(j-s)d}\\Q \subset \mathcal{C}_{j,m}}} b_Q. 
	\]
	Note that the quantity above is defined only for those $m \in \mathbb{Z}^d$ for which at least one such $Q$ exists. Also, we have that
	\[
	\operatorname{supp}  T_j B^{j-s,m} \subset  2^3\mathcal{C}_{j,m}. 
	\]
	We observe that for each fixed $j \in \mathbb{Z}$, the cubes 
	$\{2^3 \mathcal{C}_{j,m}\}_{m \in \mathbb{Z}^d}$ have bounded overlap, but no they do have the nesting structure. Therefore, we further subdivide the cubes
	$\{\mathcal{C}_{j,m} : j \in \mathbb{Z}, m \in \mathbb{Z}^d\}$ via the shifted dyadic grids defined as follows. For $\vec{u} \in \left\{0, \frac{1}{3}, \frac{2}{3}\right\}^d$, we define the dyadic grids 
	$\mathcal{D}^{\vec{u}}$ as
	\[
	\mathcal{D}^{\vec{u}} 
	:= \left\{ 2^j [0,1]^d + m + \vec{u} : j \in \mathbb{Z},\; m \in \mathbb{Z}^d \right\}. 
	\]
	We will require the following lemma from \cite[Lemma 2.5]{MR3065022}. 
	
	\begin{lemma}\label{lemma:shifted}
		 For any cube $\mathcal{C}$, there exists $\vec{u} \in \left\{0, \tfrac{1}{3}, \tfrac{2}{3}\right\}^d$ 
		and $R \in \mathcal{D}^{\vec{u}}$ such that
		$\mathcal{C} \subset R, \quad 2^3 \mathcal{C} \subset \widetilde{R} \quad \text{and} \quad 3\ell(\mathcal{C}) < \ell(R) \leq 6\ell(\mathcal{C})$ where $\widetilde{R}$ denotes the unique cube in $\mathcal{D}^{\vec{u}}$ containing $R$ with sidelength $2^3 \ell(R)$. 
	\end{lemma}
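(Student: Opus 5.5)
We argue one coordinate at a time and then take products. The grids must be read as in \cite{MR3065022}, namely $\mathcal{D}^{\vec u}=\{2^j([0,1)^d+m+(-1)^j\vec u): j\in\Z,\ m\in\Z^d\}$, with half-open cubes. This reading gives nestedness within each grid. It also makes the three one-dimensional grids combine well at every scale: for each $u\in\{0,\frac13,\frac23\}$, the endpoints of the scale-$2^j$ intervals of the grid with shift $u$ form the coset $2^j(\Z+(-1)^ju)$. Since $2^j\cdot\frac13\equiv \pm\frac13 \pmod 1$ up to the sign convention, these three cosets are pairwise disjoint for the three values of $u$, and their union is exactly the lattice $\frac{2^j}{3}\Z$. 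The first step is to check this elementary $2$-adic fact about $\frac13$.

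Given the cube $\mathcal{C}$ with side $\ell$, fix the unique $j\in\Z$ with $3\ell<2^j\le 6\ell$. Then $R$ will have side $2^j$ and $\widetilde R$ will have side $2^{j+3}$. In coordinate $i$, let $I_i$ be the projection of $\mathcal C$ (length $\ell$) and $8I_i$ the concentric interval of length $8\ell$. The inequalities $\ell<2^j/3$ and $8\ell<2^{j+3}/3$ hold, and a half-open interval of length strictly less than the spacing $\lambda$ of a lattice contains at most one lattice point. Hence $I_i$ contains at most one point of $\frac{2^j}{3}\Z$, and $8I_i$ contains at most one point of $\frac{2^{j+3}}{3}\Z$.

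Each such point belongs to exactly one of the three shifts. So at most one shift $u$ is excluded because $I_i$ meets an endpoint of its scale-$2^j$ grid, and at most one more is excluded because $8I_i$ meets an endpoint of its scale-$2^{j+3}$ grid. At least one $u_i\in\{0,\frac13,\frac23\}$ survives. For this $u_i$, $I_i$ lies in a single interval of length $2^j$ of grid $u_i$, and $8I_i$ lies in a single interval of length $2^{j+3}$ of grid $u_i$. By nestedness of the grid $u_i$, the latter interval is the ancestor of the former, because it contains $I_i$.

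Finally set $\vec u=(u_1,\dots,u_d)$. Let $R$ be the product of the chosen scale-$2^j$ intervals and $\widetilde R$ the product of the scale-$2^{j+3}$ ones. Both lie in $\mathcal D^{\vec u}$, since the shifted grid is a product of one-dimensional shifted grids at a common scale. Moreover $\mathcal C\subset R$, $2^3\mathcal C\subset \widetilde R$, $\widetilde R$ is the ancestor of $R$ with side $2^3\ell(R)$, and $3\ell(\mathcal C)<\ell(R)\le 6\ell(\mathcal C)$.

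The main obstacle is that a single shift must work at two scales simultaneously, for $\mathcal C$ at scale $2^j$ and for $2^3\mathcal C$ at scale $2^{j+3}$. The counting above handles this: each scale rules out at most one of the three shifts. That counting relies on the union of the three shifted grids at every scale being the uniform lattice $\frac{2^j}{3}\Z$. This is where the $(-1)^j$ (equivalently, $2$-adic) convention in the definition of $\mathcal D^{\vec u}$ is essential, and it is the point to verify carefully, together with the half-open endpoint conventions.
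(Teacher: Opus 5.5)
Your proof is correct. The paper gives no argument of its own; it only cites \cite[Lemma 2.5]{MR3065022}. The standard proof of that lemma is the one-dimensional observation you use, applied at a single scale. The three cosets $2^j(\mathbb{Z}+(-1)^ju)$ partition $\frac{2^j}{3}\mathbb{Z}$, so an interval of length $<2^j/3$ rules out at most one shift and two remain.

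That single-scale argument gives $\mathcal C\subset R$ with $3\ell(\mathcal C)<\ell(R)\le 6\ell(\mathcal C)$. It does not control where $\mathcal C$ sits inside the ancestor $\widetilde R$, so the extra containment $2^3\mathcal C\subset\widetilde R$ in the statement needs something more. Your device supplies exactly that: you run the count simultaneously at scales $2^j$ and $2^{j+3}$, each scale ruling out at most one of the three shifts (using $8\ell<2^{j+3}/3$). Nestedness then identifies the scale-$2^{j+3}$ interval as the ancestor of the scale-$2^j$ one. So your route proves the lemma as actually stated, not just its standard single-scale form.

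You are also right that the paper's displayed definition $\mathcal D^{\vec u}=\{2^j[0,1]^d+m+\vec u\}$ is neither nested nor covering at small scales. It must be read with the factor $2^j$ applied to $m+(-1)^j\vec u$ and with half-open cubes; otherwise ``the unique cube $\widetilde R$'' has no meaning.

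One small correction to your commentary concerns where the sign $(-1)^j$ matters. The partition of $\frac{2^j}{3}\mathbb{Z}$ by the three cosets holds for either fixed sign, so the sign plays no role there. What the sign gives is nestedness, which you assert without checking. The check is one line: $2^{j+1}\big(n+(-1)^{j+1}u\big)=2^j\big(2n-2(-1)^ju\big)$, and $-2u\equiv u \pmod 1$ because $3u\in\mathbb{Z}$.
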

	From this lemma we get for each $\mathcal{C}_{j,m}$, there exists $\vec{w} \in \{0,\frac{1}{3},\frac{2}{3}\}^d$ and $\widetilde{\mathcal{C}}_{j,m} \in \mathcal{D}^{\vec{w}}$ such that $2^3 \mathcal{C}_{j,m} \subset \widetilde{\mathcal{C}}_{j,m}$ and $|\widetilde{\mathcal{C}}_{j,m}| \lesssim|\mathcal{C}_{j,m}|$, which leads to the following decomposition:
	\[
	B^{j-s,m}
	= \sum_{\vec{u} \in \{0,\tfrac{1}{3},\tfrac{2}{3}\}^d}
	B_{\vec{u}}^{j-s,m},
	\]
	where
	\[
	B_{\vec{u}}^{j-s,m}
	:=
	\begin{cases}
		B^{j-s,m}, & \text{if } \vec{u} = \vec{w}, \\
		0, & \text{otherwise}. 
	\end{cases}
	\]
	Thus it is enough to estimate the quantity 
    \[\left| \left\{ x \in \mathbb{R}^d \setminus E :
	\sup_{k \in \mathbb{Z}} \big| \sum_{j>k} \sum_{m \in \mathbb{Z}^d}
	T_jB_{\vec{u}}^{j-s,m}(x) \big|> \frac{C}{3^ds^{1+\delta}} \,\alpha  \right\} \right|, \;\text{for a fixed}\;\vec{u} \in \{0,\tfrac{1}{3},\tfrac{2}{3}\}^d. \]
	For a fixed $\vec{u} \in \{0,\tfrac{1}{3},\tfrac{2}{3}\}^d$, $
	\operatorname{supp} T_j B_{\vec{u}}^{j-s,m}
	\subset \widetilde{\mathcal{C}}_{j,m},$ 
	where $\widetilde{\mathcal{C}}_{j,m} \in \mathcal{D}^{\vec{u}}$, and any two such cubes $\widetilde{\mathcal{C}}_{j,m}$ and $\widetilde{\mathcal{C}}_{j',m'}$ are either disjoint or one is contained in another. 
	We set
	\[
	C^{\vec{u}}_{0} := \left\{ (j,m) \in \mathbb{Z} \times \mathbb{Z}^d :
	\exists\, Q \in \mathcal{Q}, |Q| = 2^{(j-s)d},\ Q \subset \mathcal{C}_{j,m}\, \text{and}\, \widetilde{\mathcal{C}}_{j,m} \in \mathcal{D}^{\vec{u}} \right\}\] and  \[\mathfrak{X}^{\vec{u}} := \bigcup_{(j,m) \in C^{\vec{u}}_0} \widetilde{\mathcal{C}}_{j,m}. 
	\]
	Thus, $\operatorname{supp} \left( \sum_{j \in \mathbb{Z}} \sum_{m \in \mathbb{Z}^d}T_j B_{\vec{u}}^{j-s,m} \right)\subset \mathfrak{X}^{\vec{u}}$. 
	Let\[
	\mathfrak{X}^{\vec{u}}_{1}:= \left\{ x \in \mathfrak{X}^{\vec{u}} :
	\sum_{(j,m) \in C^{\vec{u}}_{0}} \mathfrak{X}_{\widetilde{K}_{j,m}}(x)
	> 2^{s(d+\varepsilon_0)} \right\} \,\text{and}\quad \mathfrak{X}^{\vec{u}}_{2}:=\mathfrak{X}^{\vec{u}}\setminus \mathfrak{X}^{\vec{u}}_{1},
	\]
	where $\varepsilon_0 > 0$ is a small constant. Note that on the set $\mathfrak{X}^{\vec{u}}_{2}$ we have controlled overlap in the scales with respect to the parameter s. We write
	\begin{align*}
		&\left| \left\{ x \in \mathbb{R}^d \setminus E :
		\sup_{k \in \mathbb{Z}} \big| \sum_{j>k} \sum_{m \in \mathbb{Z}^d}
		T_jB_{\vec{u}}^{j-s,m}(x) \big|
		> \frac{C}{3^ds^{1+\delta}} \,\alpha  \right\} \right|\\
		\leq&|\mathfrak{X}_1^{\vec{u}}|
		+ \left| \left\{ x \in \mathfrak{X}_2^{\vec{u}} :	\sup_{k \in \mathbb{Z}} \big| \sum_{j>k} \sum_{m \in \mathbb{Z}^d}
		T_jB_{\vec{u}}^{j-s,m}(x) \big|
		> \frac{C}{3^ds^{1+\delta}} \,\alpha  \right\}
		\right|. 	
	\end{align*}
	We note that for any $(j,m) \in C_0^{\vec{u}}$, there exists at least one $Q_{j,m} \in \mathcal{Q}$
	with $|Q_{j,m}| = 2^{(j-s)d}$ such that $Q_{j,m} \subset K_{j,m}$. By Chebyshev's inequality, we have
	\begin{align*}
		|\mathfrak{X}_1^{\vec{u}}|
		&\leq \frac{1}{2^{s(d+\varepsilon_0)}}
		\sum_{(j,m) \in C^{\vec{u}}_0} |\widetilde{\mathcal{C}}_{j,m}|\\
		&\lesssim {2^{-s\varepsilon_0}}
		\sum_{(j,m)\in C^{\vec{u}}_0} |Q_{j,m}|\\
		&\leq \frac{2^{-s\varepsilon_0}}{\alpha} \|f\|_{L^1(\mathbb{R}^d)}. 
	\end{align*}
	Let
	$C^{\vec{u}} := \left\{ (j,m) \in C^{\vec{u}}_0 : \widetilde{\mathcal{C}}_{j,m} \not\subset \mathfrak{X}_1^{\vec{u}} \right\}.$ Then for $x \in \mathfrak{X}_2^{\vec{u}}$, $\sum_{\substack{(j,m)\in C^{\vec{u}}_0 \setminus C^{\vec{u}}},\,j>k}
		T_j B_{\vec{u}}^{j-s,m}(x)=0$. Hence, we have
	\begin{align*}
		\sum_{j>k} \sum_{m \in \mathbb{Z}^d}T_j B_{\vec{u}}^{j-s,m}(x)&=\sum_{\substack{(j,m)\in C^{\vec{u}},\, j>k}}T_j B_{\vec{u}}^{j-s,m}(x). 
	\end{align*}
	Next, we observe that for any fixed $(j,m) \in C^{\vec{u}}$, there are at most $2^{s(d+\varepsilon_0)}$ indices $(j',m') \in C^{\vec{u}}$ such that 
	$\widetilde{\mathcal{C}}_{j,m} \subset \widetilde{\mathcal{C}}_{j',m'}. $ Based on this observation, we subdivide the collection $C^{\vec{u}}$ as follows:
	\[
	C_1^{\vec{u}} := \left\{ (j,m) \in C^{\vec{u}} :
	\text{there is no other } (j',m') \in C^{\vec{u}} \text{ such that }
	\widetilde{\mathcal{C}}_{j,m} \varsubsetneqq \widetilde{\mathcal{C}}_{j',m'} \right\},
	\]
	\[
	C_2^{\vec{u}} := \left\{ (j,m) \in C^{\vec{u}} \setminus C_1^{\vec{u}} :
	\text{there is no other } (j',m') \in C^{\vec{u}} \setminus C_1^{\vec{u}}
	\text{ such that } \widetilde{\mathcal{C}}_{j,m} \varsubsetneqq \widetilde{\mathcal{C}}_{j',m'} \right\},
	\]
	\[
	\vdots
	\]
	\[
	C_l^{\vec{u}} := \left\{ (j,m) \in C^{\vec{u}} \setminus \bigcup_{i=1}^{l-1} C_i^{\vec{u}} :
	\text{there is no other } (j',m') \in C^{\vec{u}} \setminus \bigcup_{i=1}^{l-1} C_i^{\vec{u}}
	\text{ such that } \widetilde{\mathcal{C}}_{j,m} \varsubsetneqq \widetilde{\mathcal{C}}_{j',m'} \right\}. 
	\]
	This process terminates at the step $l = 2^{s(d+\varepsilon_0)}$. 

	We claim that the following pointwise inequality is true:
	\begin{align*}
		\left| \sum_{j>k} \sum_{m \in \mathbb{Z}^d}
		T_j B^{j-s,m}_{\vec{u}}(x) \right|
		&=
		\left| \sum_{1 \leq i \leq 2^{s(d+\varepsilon_0)}}
		\sum_{(j,m)\in C_i^{\vec{u}}}
		T_j B^{j-s,m}_{\vec{u}}(x) \right|  \ \text{for}\ x\in \mathfrak{X}^{\vec{u}}_2.
	\end{align*}
	%{\color{blue} what is $t_0$?}
	%To see this, we let $k \in \mathbb{Z}$ be fixed and $x \in \mathfrak{X}_2^{\vec{u}}$. If $x \notin \bigcup_{l=1}^{2^{s(d+\varepsilon_0)}} \bigcup_{(j,m)\in C_l^{\vec{u}}} \widetilde{\mathcal{C}}_{j,m},$ then $\left| \sum_{j>k} \sum_{m \in \mathbb{Z}^d}T_j B^{j-s,m}_{\vec{u}}(x) \right|= 0.$So let	$x \in \bigcup_{l=1}^{2^{s(d+\varepsilon_0)}} 
	%\bigcup_{(j,m)\in C_{l}^{\vec{u}}} \widetilde{\mathcal{C}}_{j,m}. $Let $l_0$ be the largest index such that $x \in \bigcup_{(j,m)\in C_{l_0}^{\vec{u}}} \widetilde{\mathcal{C}}_{j,m}. $Then $x \notin \bigcup_{(j,m)\in C_i^{\vec{u}}} \widetilde{K}_{j,m}$ for all $i>l_0. $
	
	%Since $\{\widetilde{\mathcal{C}}_{j,m}\}_{(j,m)\in C_{l_0}^{\vec{u}}}$ are pairwise disjoint,there exists a unique $(j_{l_0},m_{l_0})\in C_{l_0}^{\vec{u}}$ such that $x \in \widetilde{\mathcal{C}}_{j_{l_0},m_{l_0}}. $
	%So $(j_{l_0},m_{l_0}) \notin C_i^{\vec{u}}$ for all $i<l_0$. Thus for each $i \in \{1,2,\dots,l_0-1\}$  by the bounded overlap property as explained earlier there exists $(j_i,m_i)\in C_i^{\vec{u}}$ such that
	%\[
	%\widetilde{\mathcal{C}}_{j_{l_0},m_{l_0}} \subset \widetilde{\mathcal{C}}_{j_{l_0-1},m_{l_0-1}} 
	%\subset \cdots \subset \widetilde{\mathcal{C}}_{j_1,m_1}. 
	%\]

	%Let $t_0 \in \{1,2,\dots,l_0\}$ be the smallest index such that $j_{t_0} \leq k$. 
	
	%Then, using the above inclusion and the fact that $
	%2^3 3. {2^j} < \ell(\widetilde{\mathcal{C}}_{j,m}) \leq 2^36. 2^j,$
	%we have $j_t \leq k$ for all $t \in \{t_0, t_0+1, \dots, l_0\}$ and the equality follows.

	We will require a crucial $L^2$-estimate for the multiscale operator with decay in the parameter in the scale $s$. We state the result below and provide its proof in the following section.
	\begin{proposition}\label{mainlemma}
		Let $\alpha > 0$ and $\mathcal{Q}$ be a collection of cubes with disjoint interiors such that for each $Q \in \mathcal{Q}$ there exists a function $b_Q \in L^1(Q)$ satisfying $\int_Q |b_Q(x)|\,dx \leq \alpha |Q|$ and $\int_Q b_Q(x)\,dx = 0.$ Let
		$B^k := \sum_{Q \in \mathcal{Q}:|Q| = 2^{kd}} b_Q.$
		Then there exists $0<\delta_0<\frac{1}{7}$, such that for any $s>s(\varepsilon)$, we have
		\[
		\left\|\sum_{j \in \mathbb{Z}} T_j B^{j-s} \right\|_{L^2(\mathbb{R}^d)}^2
		\lesssim s^{-5-3\delta_0}\ \alpha
		\sum_{Q \in \mathcal{Q}} \|b_Q\|_{L^1(\mathbb{R}^d)}.
		\]  
	\end{proposition}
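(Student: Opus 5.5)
We follow Seeger's microlocal scheme from \cite{Se2014}, keeping track of explicit powers of $s$ instead of the exponential gain used there.

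\textbf{Step 1: microlocal splitting.} Fix $s>s(\varepsilon)$. For each $j$, split $T_j$ into two parts.
\begin{itemize}
\item[(a)] The piece $T_j^{\mathrm{reg}}$, obtained by replacing $K_j$ with the mollified kernel $K_j*\varPhi^{(s\text{-dependent scale})}$. We mollify at scale $2^{j-\gamma s}$ for a small $\gamma>0$, so $K_j$ becomes smooth at a scale much finer than $2^j$ but much coarser than the cube scale $2^{j-s}$.
\item[(b)] The error $T_j-T_j^{\mathrm{reg}}$.
\end{itemize}
We then further decompose the regularised kernel in angle. Partition the sphere into $\approx 2^{\gamma' s(d-1)}$ caps of aperture $2^{-\gamma' s}$. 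Write $K_j^{\mathrm{reg}}=\sum_\nu K_{j,\nu}$, and correspondingly split each bad function $b_Q$ via a frequency projection adapted to the cone around $\nu$, as in \cite{Se2014}. The factor $m_a(x,y)$ is non-smooth, but it is constant along the segment $[x,y]$ averaged in $t$. This is exactly what makes directional localisation compatible with $a$: for $x-y$ in the cap $\nu$, the averaging only sees $a$ along directions close to $\nu$.

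\textbf{Step 2: $L^1$ error terms.} There are three error terms, which are the content of Lemmas \ref{lem 3.1}, \ref{lem 3.2} and \ref{lem 3.3}.
\begin{itemize}
\item[(i)] Regularisation error. Use the cancellation $\int b_Q=0$ and the H\"older condition \eqref{1.2}; this gives a bound $\lesssim 2^{-c\gamma s}\sum_Q\|b_Q\|_1$.
\item[(ii)] Mismatch between the angular localisation of the kernel and of $b_Q$. Integration by parts in the direction transversal to the cap gives decay $2^{-c s}$ in $L^1$.
\item[(iii)] Error from moving $a$'s averaging point. This is controlled because $\ell(Q)=2^{j-s}\ll 2^{j-\gamma s}$.
\end{itemize}
Each error term $E$ therefore satisfies $\|\sum_j E_jB^{j-s}\|_{L^1}\lesssim 2^{-cs}\sum_Q\|b_Q\|_1$. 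To convert these to $L^2$ we also establish $L^\infty$ (or BMO) bounds $\lesssim s^{C}\alpha$. These use $|b_Q|$ averaging to $\le\alpha$ and the bounded overlap of scales. Interpolating $\|F\|_2^2\le\|F\|_1\|F\|_\infty$ then gives $2^{-cs}s^C\alpha\sum\|b_Q\|_1$, which is far smaller than $s^{-5-3\delta_0}$. We use BMO rather than $L^\infty$ where the sum over $j$ is not absolutely bounded pointwise, with the standard John--Nirenberg/interpolation argument mentioned in the outline.

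\textbf{Step 3: main term by $TT^*$ (Lemma \ref{lem 3.4}).} For the main directional pieces, expand
\[
\Big\|\sum_j\sum_\nu T_{j,\nu}B^{j-s}_\nu\Big\|_2^2=\sum_{j,j'}\sum_{\nu,\nu'}\langle T_{j,\nu}B^{j-s}_\nu,T_{j',\nu'}B^{j'-s}_{\nu'}\rangle.
\]
Almost orthogonality in $\nu$ comes from the angular frequency localisation and the fact that $\widehat{K}$ is bounded by \eqref{1.3}. Almost orthogonality in $j$ comes from the dyadic frequency support, up to $O(s)$ neighbouring scales. For the diagonal terms we use $\|B_\nu\|_2^2\lesssim \alpha\|b\|_1$, obtained from the $L^1$ bound together with $L^\infty$-type control of the averages. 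We also use the gain $2^{-\gamma' s(d-1)}$ from the cap size against the $|\nu|$-count, exploiting that $T_{j,\nu}^*T_{j,\nu}$ has a kernel concentrated on a tube of width $2^{j-\gamma' s}$. The polynomial factor $s^{-5-3\delta_0}$ is very generous, so we only need a net gain of some $2^{-cs}$ after absorbing the $O(s)$ overlap of scales in $j$.

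\textbf{Main obstacle.} The hardest step is the $TT^*$ estimate in Step 3 in the presence of the rough factor $m_a$. Here $T_{j,\nu}^*T_{j,\nu}$ is not a convolution, so orthogonality in $\nu$ cannot be read off directly from Fourier support. We would first try to freeze $a$ along lines: write $m_a(x,y)$ as an average over $t$, apply Minkowski in $t$, and for fixed $t$ treat $a(tx+(1-t)y)$ as a bounded multiplier. This multiplier is constant along $x-y$ directions only in the averaged sense. Following \cite{Se2014}, we then estimate the kernel of $T_{j,\nu}T_{j,\nu'}^*$ by integration by parts only in the variable $x$ transverse to $\nu$, using the fact that $a$ enters only through the parameter $tx+(1-t)y$ with $t$ integrated.
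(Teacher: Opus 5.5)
There is a genuine gap, and it sits exactly at the step you flag as the main obstacle.

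You attach the conic frequency projection to the input $b_Q$. Since $T^{s,\nu}_j$ is not translation invariant, this says nothing about the Fourier support of the output, so there is no orthogonality in $\nu$ to use. Your remedy is to integrate by parts in $x$ transversally to $\nu$ in the kernel of $T_{j,\nu}T^*_{j,\nu'}$. That differentiates $m_a(x,y)=\int_0^1 a(tx+(1-t)y)\,dt$, and hence the rough function $a$. Your diagonal bound $\|B_\nu\|_2^2\lesssim\alpha\|b\|_1$ is also false: the $b_Q$ are only in $L^1(Q)$, and a conic multiplier does not smooth at any scale.

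The missing idea is to put the projection on the \emph{output}, splitting $T^{s,\nu}_j=G_{s,\nu}T^{s,\nu}_j+(I-G_{s,\nu})T^{s,\nu}_j$. Orthogonality in $\nu$ is then just Plancherel and Cauchy--Schwarz, using the overlap bound $\sup_{\xi\neq0}\sum_{\nu}|\phi(2^{s\gamma}s^{-14}\langle\nu,\xi/|\xi|\rangle)|^2\lesssim 2^{s\gamma(d-2)}s^{14}$ from \eqref{3.4.1}. For each fixed $\nu$ you simply discard $a$ via $|m_a|\le 1$. This gives $|(I-P_{j-s+\ell(s)})T^{s,\nu}_jB^{j-s}|\lesssim H^{s,\nu}_j*|B^{j-s}|$, where $H^{s,\nu}_j=2^{-jd}\chi_{\tau^{s,\nu}_j}$ and $\tau^{s,\nu}_j$ is a tube of length $\sim 2^j$ and width $\sim 2^{j-s\gamma}$ in direction $\nu$. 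A $TT^*$ expansion with these positive kernels gives $\sum_{i\le j}H^{s,\nu}_j*H^{s,\nu}_i*|B^{i-s}|\lesssim 2^{-2s\gamma(d-1)}\alpha$, using only disjointness of the cubes and $\int_Q|b_Q|\le\alpha|Q|$. Summing over the $\lesssim 2^{s\gamma(d-1)}$ caps yields $2^{-s\gamma}s^{14}\alpha\sum_Q\|b_Q\|_1$ (Lemma~\ref{lem 3.4}). Neither orthogonality in $j$ nor \eqref{1.3} is needed.

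With this placement, the whole interaction with the rough $a$ moves into the $L^1$ bound for $(I-G_{s,\nu})T^{s,\nu}_j$ (Lemma~\ref{lem 3.3}), which your Step~2 treats in one line. There you can neither integrate by parts against $a$ nor move $y$ to $c_Q$ inside $a(tx+(1-t)y)$.
\begin{itemize}
\item The paper splits $a=V_{j-s+\ell(s)}a+\sum_{k_2}\Lambda_{k_2}a$ and uses the cancellation of $b_Q$ only for the low-frequency part.
\item It treats $\Lambda_{k_2}a$ through the phase $\langle tw+(1-t)y,\eta\rangle$. When $\eta$ is nearly orthogonal to $\nu$, $|\langle \xi-t\eta,\nu\rangle|$ is large, and it integrates by parts in $w$ along $\nu$. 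Otherwise it integrates by parts in $t$.
\item The $t$-integration needs the cutoff $\vartheta_s$ supported away from $t=0,1$. The endpoints correspond to multiplying by $a$ on the input or output side, and those terms are not small.
\end{itemize}

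Finally, your conversion $\|E\|_2^2\le\|E\|_1\|E\|_\infty$ (or with BMO) needs $L^\infty$ or BMO bounds for the error terms themselves. The hypotheses give no bounded overlap of scales, so $L^\infty$ fails. BMO is not inherited from a positive majorant, and the error pieces contain $I-G_{s,\nu}$ and the rough $a$. The paper instead proves $L^1$ and BMO bounds only for the positive majorant $\sum_j v_j*|B^{j-s}|$ of the full sum. It deduces the weak-$L^3$ tail bound \eqref{3.1.8} and combines it with a level-set truncation at height $s^{5+3\delta_0}\alpha$; this is where the factor $s^{-5-3\delta_0}$ comes from.
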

	Assuming Proposition~\ref{mainlemma}, we complete the proof of \Cref{Maintheorem}.	By Chebyshev's inequality and Rademacher-Menshov theorem, we have 
	\begin{align*}
		&\left| \left\{ x \in \mathfrak{X}^{\vec{u}}_2:
		\sup_{k \in \mathbb{Z}} \big| \sum_{j>k} \sum_{m \in \mathbb{Z}^d}
		T_j B^{j-s,m}_{\vec{u}}(x) \big|
		> \frac{C}{3^d s^{1+\delta}} \alpha \right\} \right| \\
		= &\left| \left\{ x \in \mathfrak{X}^{\vec{u}}_2:
		\sup_{1\leq l \leq 2^{s(d+\varepsilon_0)}} \big|\sum_{1 \leq i \leq l}\sum_{(j,m)\in C^{\vec{u}}_i} T_j B^{j-s,m}_{\vec{u}}(x) \big|
		> \frac{C}{3^d s^{1+\delta}} \alpha \right\} \right|\\
		\lesssim & \frac{s^{2(1+\delta)}}{\alpha^2}
		\left\|	\sup_{1 \leq l \leq 2^{s(d+\varepsilon_0)}}\sum_{1 \leq i \leq l}\sum_{(j,m)\in C^{\vec{u}}_i} T_j B^{j-s,m}_{\vec{u}}
		\right\|_{L^2(\mathbb{R}^d)}^2 \\
		\lesssim & \frac{s^{2(2+\delta)}}{\alpha^2}
		\sup_{\epsilon_l \in \{-1,0,1\}}
		\left\|
		\sum_{1\leq l \leq 2^{s(d+\varepsilon_0)}} 
		\sum_{(j,m)\in C_l^{\vec{u}}}
		T_j\epsilon_lB^{j-s,m}_{\vec{u}}
		\right\|_{L^2(\mathbb{R}^d)}^2 \\
		= & \frac{s^{2(2+\delta)}}{\alpha^2}	
		\sup_{\epsilon_l \in \{-1,0,1\}}
		\left\|
		\sum_{j \in \mathbb{Z}} T_j \left(
		\sum_{1\leq l \leq 2^{s(d+\varepsilon_0)}} 
		\sum_{m \in \mathbb{Z}^d : (j,m)\in C_l^{\vec{u}}}
		\varepsilon_l B^{j-s,m}_{\vec{u}}
		\right)
		\right\|_{L^2(\mathbb{R}^d)}^2\\
		\lesssim & s^{-1-3\delta_0+2\delta}{\alpha} \|f\|_{L^1(\mathbb{R}^d)},
	\end{align*}
	where the final step follows from Proposition~\ref{mainlemma}. Choosing $\delta=\delta_0$ and summing in $\vec{u}$ and $s$ concludes the proof of \Cref{Maintheorem}. 
			
	\section{Proof of Proposition~\ref{mainlemma}}
	The proof is based on a microlocal decomposition developed in~\cite{Se2014} of a smoothened version of the kernel. We prove estimates for the pieces in the decomposition and then club them together into a single $L^2$-estimate via a careful interpolation argument motivated by the approach employed in\cite{bhojak2026endpoint}.
	
	Let
	\[
	\ell(s) := [11\log_2 s] + 2 
	\quad \text{and} \quad
	\ell_{\varepsilon}(s) := [11\varepsilon^{-1} \log_2 s ] + 2. 
	\tag{3.1.1}
	\label{3.1.1}
	\]
	Let $\Phi$ be a radial nonnegetive $C_c^\infty(\mathbb{R}^d)$ function supported in $\{x \in \mathbb{R}^d:|x|\leq 1\}$ and satisfying $\int \Phi(x)\,dx = 1. $
	Let $\Phi_m(x) := 2^{-md}\Phi(2^{-m}x)$. We define a smoothened version of the kernels $K_j$ as follows
	\[
	K_j^s := K_j * \Phi_{j-\ell_{\varepsilon}(s)},\quad\text{for}\;s>s(\varepsilon).
	\]
	We note that $K_j^s$ is supported in $\{x \in \mathbb{R}^d : 2^{j-2} \leq |x| \leq 2^{j+2}\}$ and it enjoys the following control in derivatives.
	\[\tag{3.1.2}\label{3.1.2}
	|\partial^\alpha K_j^s(x)|
	\leq C_\alpha 2^{-jd} 2^{-(j-\ell_\varepsilon(s))|\alpha|}. 
	\]
	This is simply a consequence of the support conditions and \eqref{1.1}.
	By mean value theorem and \eqref{1.1}, we have the following control over the error term.
	\begin{align*}
		&\|K_j - K_j^s\|_{L^1(\mathbb{R}^d)}\\
		\leq& \int_{|y|\leq 2^{j-\ell_\varepsilon(s)}} |\Phi_{j-\ell_\varepsilon(s)}(y)|
		\int_{2^{j-2} \leq |x| \leq 2^{j+2}}\big(\left| K(x) - K(x-y) \right| \, \left| \psi(2^{-j}(x-y)) \right| \\
		&\hspace{5cm}+ |K(x)| \, \left| \psi(2^{-j}x) - \psi(2^{-j}(x-y)) \right|\big)dxdy\\
		\lesssim& \int_{|y|\le 2^{j-\ell_\varepsilon(s)}} |\Phi_{j-\ell_\varepsilon(s)}(y)|\int_{2^{j-2} \le |x| \le 2^{j+2}}
		\left( |y|^\varepsilon |x|^{-d-\varepsilon} + |y|\, |x|^{-d-1} \right) dx\,dy\\
		\lesssim& 2^{-\ell_\varepsilon(s)\varepsilon}\lesssim s^{-11}. \tag{3.1.3} \label{3.1.3}
	\end{align*}
	We record this fact in the following lemma.
	\begin{lemma}\label{lem 3.1} The following estimate holds 
		\[
		\|T_j - T_j^s\|_{L^1(\mathbb{R}^d)\to L^1(\mathbb{R}^d)} \lesssim s^{-11}. 
		\]
	\end{lemma}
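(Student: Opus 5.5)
The estimate follows from the kernel bound \eqref{3.1.3} together with Minkowski's integral inequality and Fubini. Here $T_j^s$ denotes the operator with kernel $K_j^s(x-y)m_a(x,y)$, that is,
\[
T_j^s f(x)=\int K_j^s(x-y)\,m_a(x,y)\,f(y)\,dy .
\]
Then $T_j-T_j^s$ is the integral operator with kernel $(K_j-K_j^s)(x-y)\,m_a(x,y)$.

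First I will use that $\|a\|_{L^\infty(\mathbb{R}^d)}=1$. This gives $|m_a(x,y)|\le \int_0^1 |a(tx+(1-t)y)|\,dt\le 1$ for almost every $(x,y)$. The only point requiring a remark is measurability of $m_a$ when $a$ is merely an $L^\infty$ class. This is fine because $(t,x,y)\mapsto a(tx+(1-t)y)$ is measurable, and for each fixed $t$ the map $(x,y)\mapsto tx+(1-t)y$ pulls null sets back to null sets. Consequently, for every $f\in L^1(\mathbb{R}^d)$,
\[
|(T_j-T_j^s)f(x)|\le \int |K_j-K_j^s|(x-y)\,|f(y)|\,dy = \bigl(|K_j-K_j^s|*|f|\bigr)(x).
\]

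Integrating in $x$ and applying Fubini (Young's inequality for $L^1*L^1$) gives
\[
\|(T_j-T_j^s)f\|_{L^1(\mathbb{R}^d)}\le \|K_j-K_j^s\|_{L^1(\mathbb{R}^d)}\,\|f\|_{L^1(\mathbb{R}^d)} .
\]
I then invoke \eqref{3.1.3}. That bound rests on three ingredients: the Hölder regularity \eqref{1.2} for the difference $K(x)-K(x-y)$ with $|y|\le 2^{j-\ell_\varepsilon(s)}\ll 2^{j-2}\le|x|$; the smoothness of $\psi$ combined with \eqref{1.1}; and the scale invariance of the resulting integrals. Together they yield $\|K_j-K_j^s\|_{L^1}\lesssim 2^{-\ell_\varepsilon(s)\varepsilon}$. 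Since $\ell_\varepsilon(s)\ge 11\varepsilon^{-1}\log_2 s$, this is at most $s^{-11}$, uniformly in $j$.

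The only mild care needed is in verifying the hypothesis $|x|>2|y|$ of \eqref{1.2}. This holds because $s>s(\varepsilon)$ forces $\ell_\varepsilon(s)\ge 3$, so $|y|\le 2^{j-3}$ while $|x|\ge 2^{j-2}$ on the relevant support. Apart from this, I expect no real obstacle: the bound is uniform in $j$ and depends on $a$ only through $\|a\|_\infty$.
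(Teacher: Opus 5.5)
Your bound for the kernel-smoothing error is correct and matches the paper's computation \eqref{3.1.3}, combined with $|m_a|\le 1$ and Young's inequality. The gap is that you prove the estimate for the wrong operator. In the paper, $T_j^s$ is defined immediately after the lemma and carries the $t$-cutoff $\vartheta_s$:
\[
T_j^sf(x)=\int K_j^s(x-y)\int \vartheta_s(t)\,a(tx+(1-t)y)\,dt\,f(y)\,dy .
\]
This is the operator that enters the decomposition in the proof of Proposition~\ref{mainlemma}, through $T_j^s=\sum_{\nu\in\Theta_s}T_j^{s,\nu}$.

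The cutoff is essential later in the argument. The proof of Lemma~\ref{lem 3.2} uses $\int|\vartheta_s(t)|\,t^{-1}\,dt\lesssim\log s$, and this integral diverges if $\vartheta_s$ is replaced by $\chi_{[0,1]}$. The integration by parts in $t$ in Lemma~\ref{lem 6.3} also needs $\vartheta_s$ to vanish near $t=0$ and $t=1$. Lemma~\ref{lem 3.1} is exactly where the cost of this truncation is paid. So your claim that $T_j-T_j^s$ has kernel $(K_j-K_j^s)(x-y)\,m_a(x,y)$ is false for the operator in the statement, and one error term is missing.

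The repair is short. Since $\operatorname{supp}\vartheta_s\subset(0,1)$, the kernel of $T_j-T_j^s$ equals
\[
(K_j-K_j^s)(x-y)\,m_a(x,y)+K_j^s(x-y)\int_0^1\bigl(1-\vartheta_s(t)\bigr)\,a(tx+(1-t)y)\,dt .
\]
You have already handled the first term. For the second, $1-\vartheta_s$ vanishes on $[2s^{-11},1-2s^{-11}]$ and is bounded by \eqref{3.1.4} with $N=0$. Hence the $t$-integral is $O(s^{-11})$ uniformly in $(x,y)$. Since $\|K_j^s\|_{L^1}\le\|K_j\|_{L^1}\|\Phi\|_{L^1}\lesssim 1$, the same Young's inequality step shows this piece also has $L^1$ operator norm $\lesssim s^{-11}$. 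The paper's text records only \eqref{3.1.3} and leaves this second term implicit, but it is needed for the lemma as it is stated and used.
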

	Let $\vartheta_s \in C^\infty(\mathbb{R})$ be such that $\operatorname{supp} \vartheta_s \subset (s^{-11},1-s^{-11})$  and $\vartheta_s(t) = 1$ for $t \in \left[2s^{-11},1-2s^{-11} \right]$ and the derivatives satisfy
	\[\tag{3.1.4}
	\label{3.1.4}
	\|\vartheta_s^{(N)}\|_{L^\infty(\mathbb{R})}
	\leq C_N\, s^{11N}. 
	\] 
	Let $T_j^s$ and $P_m$ be the integral operators defined as follows
	\begin{align*}
		T_j^sf(x):&=\int K_j^s(x-y)\int \vartheta_s(t)\, a(tx + (1-t)y)\, dtf(y)dy\quad\text{and}\quad P_mf(x):=\Phi_m*f(x).
	\end{align*}
	
	We then have the following estimate. 

	\begin {lemma}\label{lem 3.2}
	Let $\alpha > 0$ and $\mathcal{Q}$ be a collection of cubes with disjoint interiors such that for each $Q \in \mathcal{Q}$ there exists a function $b_Q \in L^1(Q)$ satisfying $\int_Q |b_Q(x)|\,dx \leq \alpha |Q|$ and $\int_Q b_Q(x)\,dx = 0.$ Let
	$B^k = \sum_{Q \in \mathcal{Q}:\ell(Q) = 2^k} b_Q. $
	Then for any $s>1$,
	\[
	\left\| P_{j-s+\ell(s)} T_j^s B^{j-s} \right\|_{L^1(\mathbb{R}^d)}
	\lesssim s^{-11} \log s \, \left\| B^{j-s} \right\|_{L^1(\mathbb{R}^d)}. 
	\]
	\end{lemma}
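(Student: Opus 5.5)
The plan is to use the mean-zero property of each $b_Q$ together with a change of variables. The change of variables moves all the dependence on the source point $y$ out of the rough symbol $a$ and into the smooth factors $\Phi_{j-s+\ell(s)}$ and $K_j^s$. Since $B^{j-s}=\sum_{\ell(Q)=2^{j-s}} b_Q$ and $\|B^{j-s}\|_{L^1}=\sum_Q\|b_Q\|_{L^1}$ (disjoint supports), it suffices to prove, for a single cube $Q$ of side $2^{j-s}$ with center $y_Q$,
\[
\|P_{j-s+\ell(s)}T_j^s b_Q\|_{L^1(\mathbb{R}^d)}\lesssim s^{-11}\log s\,\|b_Q\|_{L^1(\mathbb{R}^d)}.
\]
Write $m:=j-s+\ell(s)$. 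The operator $P_mT_j^s$ has kernel
\[
k(x,y)=\int_{\mathbb{R}}\vartheta_s(t)\int\Phi_m(x-z)K_j^s(z-y)\,a(tz+(1-t)y)\,dz\,dt .
\]
For fixed $t\in\operatorname{supp}\vartheta_s\subset(s^{-11},1-s^{-11})$, I substitute $w=tz+(1-t)y$, so that $z=(w-(1-t)y)/t$ and $z-y=(w-y)/t$. This gives
\[
k(x,y)=\int\vartheta_s(t)\,t^{-d}\int a(w)\,\Phi_m\Big(x-\tfrac{w-(1-t)y}{t}\Big)K_j^s\Big(\tfrac{w-y}{t}\Big)\,dw\,dt .
\]
Now $y$ enters only through the smooth functions $\Phi_m$ and $K_j^s$, and $a$ is evaluated at the free variable $w$. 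By the cancellation $\int b_Q=0$,
\[
P_mT_j^sb_Q(x)=\int_Q b_Q(y)\,[k(x,y)-k(x,y_Q)]\,dy,
\]
so it suffices to show $\sup_{y\in Q}\int|k(x,y)-k(x,y_Q)|\,dx\lesssim s^{-11}\log s$.

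I would apply the mean value theorem in $y$ along the segment from $y_Q$ to $y$, where $|y-y_Q|\lesssim 2^{j-s}$, and use $\|a\|_\infty=1$. This produces two terms.

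\emph{The $\Phi_m$ term.} The $y$-derivative of $\Phi_m$ carries a factor $\frac{1-t}{t}\,2^{-m}$, and $\int|\nabla\Phi_m|(x)\,dx\lesssim 2^{-m}$ after removing the scaling. Integrating first in $x$, then in $w$ using $\int t^{-d}|K_j^s((w-y)/t)|\,dw=\|K_j^s\|_{L^1}\lesssim1$, gives the contribution
\[
\lesssim\int_{s^{-11}}^{1}\frac{dt}{t}\,2^{j-s}2^{-m}\approx 2^{-\ell(s)}\log s\lesssim s^{-11}\log s,
\]
by the definition \eqref{3.1.1} of $\ell(s)$.

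\emph{The $K_j^s$ term.} By \eqref{3.1.2}, the derivative of $K_j^s((w-y)/t)$ is bounded by $t^{-1}2^{-jd}2^{-(j-\ell_\varepsilon(s))}$ on a set of $w$ of measure $\lesssim (t2^{j})^d$. Together with $\int|\Phi_m|=1$, this gives a contribution
\[
\lesssim \log s\;2^{j-s}2^{-j+\ell_\varepsilon(s)}=2^{\ell_\varepsilon(s)-s}\log s .
\]
Since $s>s(\varepsilon)$ and $\ell_\varepsilon(s)\approx 11\varepsilon^{-1}\log_2 s$, this is at most $2^{-s/2}\lesssim s^{-11}$.

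The main obstacle is the first step, choosing the variables so that $a$ does not depend on $y$. The factor $t^{-1}$ produced by this change of variables is harmless only because $\vartheta_s$ cuts off $t<s^{-11}$. That cutoff is exactly the source of the $\log s$ loss, and the bound \eqref{3.1.4} on the derivatives of $\vartheta_s$ is not needed here. The remaining care is bookkeeping: the Jacobian $t^{-d}$ must be absorbed into the $L^1$ normalization of $K_j^s((\cdot-y)/t)$ rather than left as a loss. After that, summing over $Q$ finishes the proof.
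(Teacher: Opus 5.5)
Your proposal is correct and is essentially the paper's argument: the same change of variables that frees $a$ from $y$ (the paper writes $a(tw)$ instead of carrying the Jacobian $t^{-d}$), the cancellation of $b_Q$, the mean value theorem producing the $\frac{1-t}{t}\nabla\Phi_m$ and $\frac{1}{t}\nabla K_j^s$ terms, and the $\log s$ loss from $\int|\vartheta_s(t)|\,dt/t$, all leading to the bound $\log s\,\bigl(2^{-\ell(s)}+2^{\ell_\varepsilon(s)-s}\bigr)$. Your use of $s>s(\varepsilon)$ in the last step matches the paper's implicit use of $2^{\ell_\varepsilon(s)-s}\lesssim s^{-11}$; for $1<s\le s(\varepsilon)$ this bound still holds, but with an $\varepsilon$-dependent constant.
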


\subsection*{A Microlocal Decomposition and Related Estimates}  Let $\tfrac{1}{10} < \gamma < \tfrac{9}{10}$. Let $\Theta_s$ be the  set of unit vectors such that if $\nu \neq \nu'$ and $\nu, \nu' \in \Theta_s$, then $|\nu- \nu'| \ge 2^{-4 - s\gamma}$ and  for every $\theta \in S^{d-1}$, there exists $\nu_s \in \Theta_s$ such that $|\theta - \nu_s| < 2^{-4 - s\gamma}$.  Moreover, we also have $\operatorname{card}(\Theta_s) \lesssim 2^{s\gamma(d-1)}.$

For $\nu\in S^{d-1}$ we take a nonnegative function $\widetilde{\chi}_{s,\nu} \in C^\infty(S^{d-1})$ such that $\widetilde{\chi}_{s,\nu}(\theta) = 1$ if $|\theta - \nu| \le 2^{-3 - s\gamma}$, 
$\widetilde{\chi}_{s,\nu}(\theta) = 0$ if $|\theta - \nu| \ge 2^{-2 - s\gamma}$,
and for each $M \in \mathbb{N}$,
$2^{-s\gamma M} \widetilde{\chi}_{s,\nu}$ is a bounded family in $C^M(S^{d-1})$. For each $\theta \in S^{d-1} $  there is at least one $\nu \in \Theta_s $ such that $|\theta - \nu| \le 2^{-4 - s\gamma}$ and thus $\widetilde{\chi}_{s,\nu}(\theta) = 1$ and the number of $\nu \in \Theta_s$ for which $\widetilde{\chi}_{s,\nu}(\theta) \neq 0$ is bounded above, uniformly in $\theta$ and $s$ by the separatedness assumption. We then define  for $\nu \in \Theta_s$,
\[
\chi_{s,\nu}(x)
:= \frac{\widetilde{\chi}_{s,\nu}\!\left(\frac{x}{|x|}\right)}
{\displaystyle \sum_{\nu' \in \Theta_s}
	\widetilde{\chi}_{s,\nu'}\!\left(\frac{x}{|x|}\right)},
\quad \text{for } x \neq 0. 
\]
Then $\sum_{\nu \in \Theta_s} \chi_{s,\nu}(x) = 1,$ for $x \neq 0,$ and we have the following
    \[\tag{3.1.5}\label{3.1.5}
|(\langle \nu ,\nabla \rangle)^M \chi_{s,\nu}(x)| \lesssim C_M |x|^{-M}, \quad \text{for } x \neq 0.
\]
\[\tag{3.1.6}\label{3.1.6}
|\partial^\alpha \chi_{s,\nu}(x)| \lesssim C_\alpha 2^{s\gamma |\alpha|} |x|^{-|\alpha|}, \quad \text{for } x \neq 0.
\]

Let $K_j^{s,\nu}(x) := K_j(x)\chi_{s,\nu}(x)$, so that 
\[\tag{3.1.7}\label{3.1.7}
\operatorname{supp} K_j^{s,\nu} \subset 
\left\{ x \in \mathbb{R}^d : 2^{j-2} \le |x| \le 2^{j+2}, \;
\left|\frac{x}{|x|} - \nu \right| \le 2^{-2 - s\gamma} \right\}.
\]
We have the identity $T_j^s = \sum_{\nu \in \Theta_s} T_j^{s,\nu}$, where the operators $T_j^{s,\nu}$ are defined as
\[
T_j^{s,\nu}f(x):=\int_{\R^d} K_j^{s,\nu}(x-y)\int_0^1 \vartheta_s(t)\, a(tx+(1-t)y)\,dtf(y)dy.
\]

Let $\phi \in C^\infty(\mathbb{R})$ be such that $\phi(u) = 1$ for $|u| < \tfrac{1}{2}$ and $\phi(u) = 0$ for $|u| \ge 1$. We define the convolution operators $G_s^{s,\nu}$ as
\[ 
(\widehat{G_s^{s,\nu} f})(\xi)
:= \phi\left(2^{s\gamma} s^{-14}\left\langle \nu, \frac{\xi}{|\xi|} \right\rangle\right)\widehat{f}(\xi).
\]
We have the splitting of the operators $T_j^s$ given by
\begin{align*}
T_j^s
&= \sum_{\nu \in \Theta_s} G_{s,\nu} T_j^{s,\nu}
+ \sum_{\nu \in \Theta_s} (I - G_{s,\nu}) T_j^{s,\nu}.
\end{align*}
Then we have the following crucial estimates with decay in the scales $s$.
\begin{lemma}\label{lem 3.3}
Under the same assumtions as in Lemma \ref{lem 3.2} for $s > s(\varepsilon)$ and $\nu \in \Theta_s$, we have
\[
\left\| \sum_{j \in \mathbb{Z}} (I - P_{j-s+\ell(s)})(I - G_{s,\nu}) T_j^{s,\nu} B^{j-s} \right\|_{L^1(\mathbb{R}^d)}
\lesssim s^{-11}\, 2^{-s\gamma(d-1)} \sum_{Q \in  \mathcal{Q}}\|b_Q\|_{L^1(\mathbb{R}^d)}.
\]  
\end{lemma}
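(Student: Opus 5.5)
We follow Seeger's argument in \cite{Se2014}. By the triangle inequality we reduce to a single cube. It suffices to show, for each $j$ and each $Q\in\mathcal{Q}$ with $\ell(Q)=2^{j-s}$, that
\[
\big\|(I-P_{j-s+\ell(s)})(I-G_{s,\nu})T_j^{s,\nu}b_Q\big\|_{L^1(\mathbb{R}^d)}\lesssim s^{-11}2^{-s\gamma(d-1)}\|b_Q\|_{L^1}.
\]
Summing over $Q$ and $j$ then gives the claim. Since $\|I-P_{j-s+\ell(s)}\|_{L^1\to L^1}\lesssim 1$, we may drop that factor; it plays no further role. We then write $T_j^{s,\nu}b_Q(x)=\int \mathcal{K}(x,y)b_Q(y)\,dy$ with
\[
\mathcal{K}(x,y)=K_j^{s,\nu}(x-y)\int\vartheta_s(t)a(tx+(1-t)y)\,dt .
\]
We change variables $z=tx+(1-t)y$ in the $t$-integral. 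Because $x-y$ lies in the narrow cone around $\nu$ of aperture $2^{-s\gamma}$ and $|x-y|\sim 2^j$, the dependence of $\mathcal{K}$ on $x$ in the direction $\nu$ is smooth at scale $2^j s^{-11}$. This uses \eqref{3.1.4} and \eqref{3.1.5}. In the transversal directions, however, $a$ gives no smoothness.

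The key step is to exploit $I-G_{s,\nu}$. Its symbol vanishes unless $|\langle\nu,\xi/|\xi|\rangle|\ge \tfrac12 2^{-s\gamma}s^{14}$, so on its support we can write
\[
I-G_{s,\nu}=\sum_{k}\widetilde{L}_k\,\langle\nu,\nabla\rangle^{N}\Delta_k,
\]
where the $\widetilde{L}_k$ are multipliers whose kernels have $L^1$ norm $\lesssim (2^{s\gamma}s^{-14}2^{k})^{N}$. We then integrate by parts $N$ times in the direction $\nu$ in $x$. This is legitimate because the derivatives $\langle \nu,\nabla_x\rangle$ fall on $K_j^{s,\nu}(x-y)$ and on $\vartheta_s$ after the change of variables, and each costs only $(2^{j}s^{-11})^{-1}$ by \eqref{3.1.2}, \eqref{3.1.4} and \eqref{3.1.5}; crucially, the directional derivative does not hit $a$. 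For frequencies $2^{k}\gtrsim 2^{-j+s-\ell(s)}$ (those not killed by $I-P$), each derivative gains $2^{s\gamma}s^{-14}\cdot s^{11}2^{-j}/2^{-j+s-\ell(s)}$-type factors. Combined with the choice of $\ell(s)$ and $\ell_\varepsilon(s)$, this yields a loss of at most $s^{-3}$ per step up to $2^{s\gamma}$ losses. For the remaining low frequencies we instead use the cancellation $\int b_Q=0$ together with the $x$-smoothness of the kernel in $y$, as in Lemma \ref{lem 3.2}.

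Finally we count. The kernel $K_j^{s,\nu}$ is supported in a tube of volume $\sim 2^{jd}2^{-s\gamma(d-1)}$ with height $\lesssim 2^{-jd}$. Hence its $L^1$ mass is $\lesssim 2^{-s\gamma(d-1)}$, which is exactly the angular factor in the statement. It remains to produce the factor $s^{-11}$, which we take from $N$ integrations by parts with $N$ large, gaining $s^{-3N}$. We expect the main obstacle to be making the integration by parts in $\nu$ rigorous despite the rough factor $a(tx+(1-t)y)$. Our plan is to parametrize $x=y+r\theta$ and use the variable $z=y+tr\theta$, so that $\langle \nu,\nabla_x\rangle$ acts only on smooth functions up to an error $|\theta-\nu|\lesssim 2^{-s\gamma}$. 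That error we absorb using \eqref{3.1.6} and the condition $s>s(\varepsilon)$.
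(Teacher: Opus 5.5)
Your key step fails as written. After the substitution $z=tx+(1-t)y$, the derivative $\langle\nu,\nabla_x\rangle$ \emph{does} hit $a$, and $\mathcal K(x,y)$ need not be smooth in $x$ along $\nu$ at any scale. With $x=y+r\theta$ you have $\int_0^1\vartheta_s(t)a(tx+(1-t)y)\,dt=\frac1r\int_0^r\vartheta_s(\rho/r)\,a(y+\rho\theta)\,d\rho$. This is smooth in $r$, but as a function of $\theta$ it is only a bounded ray average of $a$. Moving $x$ along the fixed vector $\nu$ rotates $\theta$ at rate $|\nu-\langle\nu,\theta\rangle\theta|/r$, which is of order $2^{-s\gamma-j}$ on most of $\operatorname{supp}K_j^{s,\nu}$. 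So $\langle\nu,\nabla_x\rangle$ produces an angular derivative of $a$, which does not exist for $a\in L^\infty$.

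The small factor $|\theta-\nu|$ multiplies this nonexistent derivative, so nothing can be ``absorbed''; \eqref{3.1.6} controls $\chi_{s,\nu}$, not $a$. Suppose you instead leave the angular part on the Littlewood--Paley kernel. Each such derivative costs $2^{-s\gamma}|\xi|$, while each factor $\langle\nu,\xi\rangle^{-1}$ gains only $2^{s\gamma}s^{-14}|\xi|^{-1}$. The net is $s^{-14}$ with no decay in $|\xi|$, so the bound is not summable over dyadic output frequencies.

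Separately, you cannot drop $I-P_{j-s+\ell(s)}$. The operator $I-G_{s,\nu}$ is unbounded on $L^1$, and $\int T_j^{s,\nu}b_Q$ need not vanish, so $(I-G_{s,\nu})T_j^{s,\nu}b_Q$ is in general not even integrable (its Fourier transform is discontinuous at $0$). The decay $\min\{1,2^{j-s+\ell(s)-k}\}$ in \eqref{3.3.11} is what makes the low-frequency part summable, and you implicitly use it later anyway.

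What is missing is a way to exploit $|\langle\nu,\xi\rangle|\gtrsim 2^{-s\gamma}s^{14}|\xi|$ without ever differentiating $a$. The paper does this by decomposing $a$ itself in frequency, $a=V_{j-s+\ell(s)}a+\sum_{k_2}\Lambda_{k_2}a$. The smooth part is handled by the cancellation of $b_Q$ (Lemma~\ref{lem6.1}). Off-diagonal pairs $(k_1,k_2)$ are handled by integration by parts in $w$ via the lower bound for $|\xi-t\eta|$ (Lemma~\ref{lem7.2}). On the diagonal (Lemma~\ref{lem 6.3}) one further splits $a=S_{s,\nu}a+(I-S_{s,\nu})a$. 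For $S_{s,\nu}a$, $\eta$ is nearly orthogonal to $\nu$, and one integrates by parts along $\nu$ in $w$ using $|\langle\xi-t\eta,\nu\rangle|\gtrsim 2^{-k_1-s\gamma}s^{14}$. For $(I-S_{s,\nu})a$, one integrates by parts in $t$ using $|\langle w-y,\eta\rangle|\gtrsim 2^{j-k_2-s\gamma}s^{2}$ and \eqref{3.1.4}.

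Within your framework, the repair would be to replace the constant-coefficient factorization of $I-G_{s,\nu}$ by a variable-coefficient nonstationary-phase argument. For fixed $y$, integrate by parts in $x$ with $\langle\theta(x),\nabla_x\rangle/(-2\pi i\langle\theta(x),\xi\rangle)$, where $\theta(x)=(x-y)/|x-y|$. Since $\langle\theta,\nabla\rangle\theta=0$, the transpose is $(2\pi i\langle\theta,\xi\rangle)^{-1}(\partial_r+\frac{d-1}{r})$, which falls only on $K_j^{s,\nu}$ and on $\vartheta_s(\rho/r)/r$. Moreover, $|\langle\theta,\xi\rangle|\gtrsim 2^{-s\gamma}s^{14}|\xi|$ on the support of $1-\phi(2^{s\gamma}s^{-14}\langle\nu,\xi/|\xi|\rangle)$, because $|\theta-\nu|\le 2^{-2-s\gamma}$.

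For the frequencies surviving $I-P_{j-s+\ell(s)}$, this gains about $2^{(\gamma-1)s}s^{-14}2^{\ell(s)+\ell_\varepsilon(s)}$ per step. The remaining low frequencies would be treated by your Lemma~\ref{lem 3.2}-type cancellation argument, summed via \eqref{3.3.11}, at the price of a $\log s$ factor that is harmless for Proposition~\ref{mainlemma}. The decay therefore comes from $2^{-(1-\gamma)s}$, not from powers of $s$. Note also that by \eqref{3.1.2} a derivative of $K_j^s$ costs $2^{\ell_\varepsilon(s)-j}$, not $s^{11}2^{-j}$.
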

\begin{lemma}\label{lem 3.4}
 Let $\alpha > 0$ and $\mathcal{Q}$ be a collection of cubes with disjoint interiors such that for each $Q \in \mathcal{Q}$ there exists a function $b_Q \in L^1(Q)$ satisfying $\int_Q |b_Q(x)|\,dx \leq \alpha |Q|$. Let $B^k := \sum_{Q \in \mathcal{Q}:|Q|= 2^{kd}} b_Q$. For $s> s(\varepsilon)$,
\[
\left\| \sum_{\nu \in \Theta_s} \sum_{j \in \mathbb{Z}}
(I - P_{j-s+\ell(s)})\ G_{s,\nu}\ T_j^{s,\nu} B^{j-s}
\right\|_{L^2(\mathbb{R}^d)}^2
\lesssim 2^{-s\gamma}\ s^{14}\ \alpha
\sum_{Q \in \mathcal{Q}} \|b_Q\|_{L^1(\mathbb{R}^d)}.
\]
\end{lemma}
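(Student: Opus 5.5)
The plan is to bound the square function coming from the frequency localizations $G_{s,\nu}$ by a counting argument, and then estimate each directional piece by a purely size-based $L^2$ computation. This computation exploits the thin angular support of $K_j^{s,\nu}$; no cancellation of $b_Q$ is used, consistent with the hypotheses of Lemma~\ref{lem 3.4}.

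\emph{Step 1: removing $I-P$ and decoupling the directions.} Since $I-P_{j-s+\ell(s)}$ depends on $j$, I first rewrite the sum as $\sum_\nu G_{s,\nu}F_\nu$ with
\[
F_\nu:=\sum_{j}(I-P_{j-s+\ell(s)})T_j^{s,\nu}B^{j-s}.
\]
This is legitimate because $G_{s,\nu}$ is a Fourier multiplier and commutes with the convolutions $P_m$.

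For fixed $\xi\neq0$, the multiplier $\phi(2^{s\gamma}s^{-14}\langle\nu,\xi/|\xi|\rangle)$ vanishes unless $|\langle \nu,\xi/|\xi|\rangle|\le 2^{-s\gamma}s^{14}$. By the $2^{-4-s\gamma}$-separation of $\Theta_s$, the number of such $\nu$ is
\[
\lesssim s^{14}2^{-s\gamma}\cdot 2^{s\gamma(d-1)}=s^{14}2^{s\gamma(d-2)},
\]
since the band has surface measure $\approx s^{14}2^{-s\gamma}$. Cauchy--Schwarz in $\nu$ on the Fourier side and Plancherel then give
\[
\Big\|\sum_\nu G_{s,\nu}F_\nu\Big\|_2^2\lesssim s^{14}2^{s\gamma(d-2)}\sum_{\nu\in\Theta_s}\|F_\nu\|_2^2 .
\]
The factor $(I-P_{j-s+\ell(s)})$ is a difference of an identity and an averaging operator with a nonnegative kernel. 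So $|F_\nu|\le \sum_j (\delta_0+\Phi_{j-s+\ell(s)})*|T_j^{s,\nu}B^{j-s}|$, and the smoothing at scale $2^{j-s+\ell(s)}\ll 2^{j-s\gamma}$ only fattens the supports harmlessly. I will therefore treat $F_\nu$ as if it were $\sum_j T_j^{s,\nu}B^{j-s}$.

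\emph{Step 2: pointwise domination and expansion.} Since $|a|\le1$ and $0\le\vartheta_s\le1$, we have $|T_j^{s,\nu}f|\le |K_j^{s,\nu}|*|f|$. Expanding the square,
\[
\|F_\nu\|_2^2\lesssim \sum_{j\le j'}\int \big(|\widetilde{K_j^{s,\nu}}|*|K_{j'}^{s,\nu}|\big)(x-y)\,|B^{j-s}(y)|\,|B^{j'-s}(x)|\,dx\,dy ,
\]
where $\widetilde{\cdot}$ denotes reflection. By \eqref{1.1} and \eqref{3.1.7}, $\|K_j^{s,\nu}\|_1\lesssim 2^{-s\gamma(d-1)}$ and $\|K_{j'}^{s,\nu}\|_\infty\lesssim 2^{-j'd}$. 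Hence for $j\le j'$ the convolution is $\lesssim 2^{-s\gamma(d-1)}2^{-j'd}$. Moreover it is supported in a set $\mathcal T_{j'}^\nu$, namely a union of two opposite tubes in direction $\nu$ of length $\approx 2^{j'}$ and cross-section width $\approx 2^{j'-s\gamma}$. This set has measure $\approx 2^{j'd}2^{-s\gamma(d-1)}$.

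\emph{Step 3: the geometric averaging (main obstacle).} The key claim is that for each $x$ and $j'$,
\[
\int_{x-\mathcal T^\nu_{j'}}\sum_{j\le j'}|B^{j-s}(y)|\,dy\lesssim \alpha\,2^{j'd}2^{-s\gamma(d-1)} .
\]
Every cube $Q$ contributing to some $B^{j-s}$ with $j\le j'$ has side at most $2^{j'-s}$. Since $\gamma<\frac{9}{10}<1$, this is much smaller than the tube width $2^{j'-s\gamma}$. Hence every such cube meeting $x-\mathcal T^\nu_{j'}$ lies in a fixed dilate of it. Disjointness of the cubes and $\|b_Q\|_1\le\alpha|Q|$ then give the bound.

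Inserting this into Step~2, I obtain
\[
\|F_\nu\|_2^2\lesssim 2^{-2s\gamma(d-1)}\,\alpha\sum_{j'}\|B^{j'-s}\|_1=2^{-2s\gamma(d-1)}\,\alpha\sum_{Q\in\mathcal Q}\|b_Q\|_1 .
\]
Summing over the $\lesssim 2^{s\gamma(d-1)}$ directions $\nu$ and combining with Step~1 yields
\[
s^{14}\,2^{s\gamma(d-2)}\,2^{-s\gamma(d-1)}\,\alpha\sum_{Q\in\mathcal Q}\|b_Q\|_1=2^{-s\gamma}\,s^{14}\,\alpha\sum_{Q\in\mathcal Q}\|b_Q\|_1,
\]
which is the bound in Lemma~\ref{lem 3.4}.

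The delicate points are twofold. First, one must track the exact support geometry of $|\widetilde{K_j^{s,\nu}}|*|K_{j'}^{s,\nu}|$ to get the tube rather than a ball; using a ball loses exactly the factor $2^{s\gamma(d-1)}$. Second, the smoothing by $P_{j-s+\ell(s)}$ must not widen the tubes beyond width $\approx 2^{j'-s\gamma}$, which holds because $\ell(s)=O(\log s)\ll s(1-\gamma)$ for $s>s(\varepsilon)$.
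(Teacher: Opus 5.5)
Your proposal is correct and follows essentially the same route as the paper's proof. The shared steps are: the counting bound $\lesssim s^{14}2^{s\gamma(d-2)}$ combined with Cauchy--Schwarz and Plancherel to decouple the directions; pointwise domination of $(I-P_{j-s+\ell(s)})T_j^{s,\nu}$ by the tube kernel $2^{-jd}\chi_{\tau_j^{s,\nu}}$; the $TT^*$ expansion over $i\le j$ using $\|H_i^{s,\nu}\|_{L^1}\lesssim 2^{-s\gamma(d-1)}$; and the packing of disjoint cubes of side $\le 2^{j-s}$ into a tube of width $\approx 2^{j-s\gamma}$. Your explicit check that $\ell(s)\ll s(1-\gamma)$ keeps the smoothed kernels inside the tubes is the justification the paper uses implicitly when it dominates by $H_j^{s,\nu}$.
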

Before establishing the proofs of Lemmata \eqref{lem 3.2}-\eqref{lem 3.4}, we complete the proof of Proposition~\ref{mainlemma}. We write

\begin{align*}
\sum_{j \in \mathbb{Z}}T_j B^{j-s}
&=\sum_{\nu \in \Theta_s} \sum_{j \in \mathbb{Z}} (I - P_{j-s+\ell(s)})(I - G_{s,\nu}) T_j^{s,\nu}B^{j-s}+\sum_{j \in \mathbb{Z}} (T_j - T_j^s) B^{j-s}\\
&+\sum_{\nu \in \Theta_s} \sum_{j \in \mathbb{Z}} (I - P_{j-s+\ell(s)}) G_{s,\nu}T_j^{s,\nu} B^{j-s}+ \sum_{j \in \mathbb{Z}} P_{j-s+\ell(s)} T_j^s B^{j-s}\\
&=:A+B.
\end{align*}

We observe that
\begin{align*}
    \int |(A+B)(x)|^2 \, dx
&\leq\left(
\int_{|B(\cdot)| \le s^{5+3\delta}\alpha}
+ \int_{|A(\cdot)| > \frac{s^{5+3\delta}}{2}\alpha}
+ \int_{|(A+B)(\cdot)| > \frac{s^{5+3\delta}}{2}\alpha}
\right)
|(A+B)(x)|^2 \, dx\\
&=: I + II + III.
\end{align*}

\subsection*{Estimate for I} By Lemmata \eqref{lem 3.1}-\eqref{lem 3.4}  we have
\begin{align*}
I
&=\int_{|B(\cdot)| \le s^{5+3\delta_0}\alpha} |(A+B)(x)|^2 \, dx \\
&\lesssim \int |A(x)|^2 \, dx
+ \int_{|B(\cdot)| \le s^{5+3\delta_0}\alpha} |B(x)|^2 \, dx \\
&\lesssim \int |A(x)|^2 \, dx
+ s^{5+3\delta_0}\alpha \int_{|B(\cdot)| \le s^{5+3\delta_0}\alpha} |B(x)| \, dx \\
&\lesssim (2^{-\gamma s} s^{14}+s^{-6+4\delta_0})\ \alpha \sum_{Q \in \mathcal{Q}} \|b_Q\|_{L^1(\mathbb{R}^d)}.
\end{align*}
To estimate the quantities II and III, we need the following estimate:
\[
\left| \left\{ x \in \mathbb{R}^d :
\left| \sum_{j \in \mathbb{Z}} T_j B^{j-s}(x) \right| > \lambda \alpha \right\} \right|
\lesssim \frac{1}{\lambda^3 \alpha}
\sum_{Q \in \mathcal{Q}} \|b_Q\|_{L^1(\mathbb{R}^d)},\ \text{for}\ \lambda>0.\tag{3.1.8}\label{3.1.8}
\]

\subsection*{Estimate for II} By Chebyshev’s inequality, Lemma~\ref{lem 3.4} and (\ref{3.1.8}),  we have

\begin{align*}
II
&= \int_{|A(\cdot)| > \tfrac{s^{5+3\delta_0}}{2}\alpha}|(A+B)(x)|^2 \, dx \\
& \lesssim  \int_0^\infty \lambda \,
\min \Big(
|\{ y \in \mathbb{R}^d : |A(y)| > \tfrac{s^{5+3\delta_0}}{2}\alpha\}|,
\left| \left\{ y \in \mathbb{R}^d : |(A+B)(y)| > \lambda \right\} \right|
\Big) d\lambda\\
&\lesssim \|A\|_{L^2(\mathbb{R}^d)}^2
+ \alpha^2 \sum_{Q \in \mathcal{Q}} \|b_Q\|_{L^1(\mathbb{R}^d)}
\int_{\tfrac{s^{5+3\delta_0}}{2}\alpha}^{\infty} \frac{d\lambda}{\lambda^2} \\
&\lesssim  (2^{-\gamma s} s^{14}+s^{-5-3\delta_0})\ \alpha
\sum_{Q \in \mathcal{Q}} \|b_Q\|_{L^1(\mathbb{R}^d)}.
\end{align*}
\subsection*{Estimate for III} By (\ref{3.1.8})  we have

\begin{align*}
III
&= \int_{|(A+B)(\cdot)| > \tfrac{s^{5+3\delta}}{2}\alpha}|(A+B)(x)|^2 \, dx \\
& \lesssim  \int_0^\infty \lambda \,
\left| \left\{ y \in \mathbb{R}^d : |(A+B)(y)| > \max \{\lambda ,\tfrac{s^{5+3\delta}}{2}\alpha\}\right\} \right|d\lambda\\
& \lesssim s^{-5-3\delta_0}\ \alpha
\sum_{Q \in \mathcal{Q}} \|b_Q\|_{L^1(\mathbb{R}^d)}.
\end{align*}
We now prove (\ref{3.1.8}).
Since $K_j(x) = K(x)\psi(2^{-j}x),$ we can choose a smooth non-negative function $v_0$ supported in $\{x \in \mathbb{R}^d:2^{-2} \leq |x|\leq 2^2\}$ such that $\|v_0\|_{L^\infty(\mathbb{R}^d)}, \ \|\nabla v_0\|_{L^1(\mathbb{R}^d)} \lesssim 1$ and $|K_0(x)| \le v_0(x).$ We can construct $v_0$ such that $\frac{\psi(x)}{|x|^d} \leq v_0(x)\leq
2^d\|\psi\|_{L^\infty(\mathbb{R}^d)}.$ We take $v_j(x):= 2^{-jd} v_0(2^{-j}x).$ Then $|K_j(x)| \le \frac{\psi(2^{-j}x)}{|x|^d} \leq v_j(x)$ and similar to the estimate (\ref{3.2}), we have 

\[\tag{3.1.9}\label{3.1.9}
\left\| \sum_{j \in \mathbb{Z}}v_j * |B^{j-s}|\right\|_{L^1(\mathbb{R}^d)}
\lesssim \sum_{Q \in \mathcal{Q}} \|b_Q\|_{L^1(\mathbb{R}^d)}.
\]
We also have
\[\tag{3.1.10}\label{3.1.10}
\left\| \sum_{j \in \mathbb{Z}} v_j * |B^{j-s}| \right\|_{\mathrm{BMO}}
\lesssim \alpha.
\]
Using Chebeyshev's inequality and interpolation we get 
\begin{align*}
\left| \left\{ x \in \mathbb{R}^d :
\left| \sum_{j \in \mathbb{Z}} T_j B^{j-s}(x) \right| > \lambda \alpha \right\} \right|
&\le \frac{1}{(\lambda \alpha)^3}
\left\|\sum_{j \in \mathbb{Z}} v_j* |B^{j-s}|\right\|_{L^3(\mathbb{R}^d)}^3 \\
&\lesssim \frac{1}{\lambda^3 \alpha}
\sum_{Q \in \mathcal{Q}} \|b_Q\|_{L^1(\mathbb{R}^d)}.
\end{align*}
Now we prove (\ref{3.1.10}).We fix a cube $R \subset\mathbb{R}^d$ and write
\begin{align*}
&\frac{1}{|R|} \int_R 
\left| \sum_{j \in \mathbb{Z}} v_j * |B^{j-s}|(x)
- \frac{1}{|R|} \int_R \sum_{2^j \ge \ell(R)} v_j * |B^{j-s}|(y)\,dy \right| dx \\
&\le \frac{1}{|R|} \int_R 
\left| \sum_{2^j \ge \ell(R)} v_j * |B^{j-s}|(x)
- \frac{1}{|R|} \int_R \sum_{2^j \ge \ell(R)} v_j * |B^{j-s}|(y)\,dy \right| dx \\
&\quad + \frac{1}{|R|} \int_R 
\left| \sum_{2^j < \ell(R)} v_j * |B^{j-s}|(x) \right| dx \\
&=: S_1 + S_2.
\end{align*}
By mean value theorem and local support of the functions, we get 
\begin{align*}
S_1
&\le \frac{1}{|R|^2} \int_R \int_R 
\sum_{2^j \ge \ell(R)} 
\left| v_j * |B^{j-s}|(x) - v_j * |B^{j-s}|(y) \right| dy\,dx \\
&\leq \frac{1}{|R|^2} \int_R \int_R 
\int_{\mathbb{R}^d} 
\sum_{2^j \ge \ell(R)}
\|\nabla v_j\|_{L^\infty(\mathbb{R}^d)} |x-y|\, |B^{j-s}\chi_{B(c_R, \sqrt{d} \ 2^{j+3})}(z)|\,dz\,dy\,dx\\
&\leq \frac{1}{|R|} \int_R
\sum_{2^j \ge \ell(R)} \sqrt{d} \ \ell({R})
2^{-j} \|(\nabla v_0)_j\|_{L^\infty(\mathbb{R}^d)}
\|B^{j-s} \chi_{B(c_R,\sqrt{d} \ 2^{j+3})}\|_{L^1(\mathbb{R}^d)} dx\\
& \lesssim \sum_{2^j \ge \ell(R)} 2^{-j} \, \ell(R) 2^{-jd}
\left\| B^{j-s} \, \chi_{B(c_R,\,\sqrt{d} \ 2^{j+3})} \right\|_{L^1(\mathbb{R}^d)}\\
&\lesssim \alpha,
\end{align*}
where $c_R$ is the centre of $R.$
Note that by the disjointness of the cubes $Q$
\begin{align*}
\left\| B^{j-s} \chi_{B(c_R,\, \sqrt{d}\,2^{j+3})} \right\|_{L^1(\mathbb{R}^d)}
&\lesssim \alpha \sum_{\substack{ Q \in \mathcal{Q} \ :\ \ell(Q)=2^{j-s} \\ Q \cap B(c_R,\sqrt{d}\,2^{j+3}) \neq \varnothing}} |Q| \\
&\lesssim \alpha \ 2^{jd} ,
\end{align*}
since for $y \in Q \cap B(c_R,\, \sqrt{d}\,2^{j+3})$, $z \in Q$,
\begin{align*}
|z - c_R|
&\le |z - y| + |y - c_R| \\
&\le \sqrt{d}\,2^{j-s-1} + \sqrt{d}\,2^{j+3} \\
&\lesssim 2^{j}.
\end{align*}

Again by similar arguments we have 
\begin{align*}
    S_2
&\leq \frac{1}{|R|} \sum_{2^j <\ell(R)} \| v_j\|_{L^1(\mathbb{R}^d)}\left\| B^{j-s} \, \chi_{B(c_R,\,2^3\sqrt{d} \ \ell(R)} \right\|_{L^1(\mathbb{R}^d)}\\
&\lesssim \alpha.
\end{align*}

This concludes the proof of Proposition~\ref{mainlemma}. 

We prove Lemmata \eqref{lem 3.2}-\eqref{lem 3.4} in the subsequent sections. 
%%%%%%%%%%%%%%%%%%%%%%
\section{Proof of Lemma \ref{lem 3.2}}
%\textcolor{red}{This lemma follows exactly as in \cite{Se2014} Proposition 2.2 and noting the change in definition of $\ell(s)$ and $\ell_\varepsilon(s)$.}
Let $Q \in \mathcal{Q}$ be such that $\ell(Q)=2^{j-s}$. Then by Fubini's theorem and cancellation property of $b_Q$, we get that  
\begin{align*}
&P_{j-s+\ell(s)} T_j^s b_Q(x)\\
=& \int_{\mathbb{R}^d} \Phi_{j-s+\ell(s)}(x-w)
\int_{\mathbb{R}^d}
K_j^s(w-y)\,\int\vartheta_s(t)\,a(tw+(1-t)y)\,dt \  b_Q(y)\,dy\  dw\\
=&\int \vartheta_s(t) \int_Q b_Q(y)\,\int_{\mathbb{R}^d}
\Phi_{j-s+\ell(s)}\!\left(x-w-\tfrac{1-t}{t}y\right)
K_j^s(w-\tfrac{y}{t})\, a(tw)\,dw\,dy\,dt \\
=& \int \vartheta_s(t) \int_{\mathbb{R}^d} a(tw)
\Big[ \int_Q b_Q(y)\,
\Big( \Phi_{j-s+\ell(s)}\!\left(x-w-\tfrac{1-t}{t}y\right) K_j^s(w-\tfrac{y}{t})\\
&\qquad\qquad- \Phi_{j-s+\ell(s)}\! \left(x-w-\tfrac{1-t}{t}c_Q\right) \Big) K_j^s(w-\tfrac{c_Q}{t})\,dy \Big] dw\,dt.
\end{align*}
Let
\[ \mathcal{A}_{j,s}^{x,w,t}(y)
:= \Phi_{j-s+\ell(s)}\!\left(x-w-\tfrac{1-t}{t}y\right)\,K_j^s(w-\frac{y}{t}).
\]
Then by the mean value theorem
\[
\left|\mathcal{A}_{j,s}^{x,w,t}(y) - \mathcal{A}_{j,s}^{x,w,t}(c_Q)\right|
\le |y - c_Q| \int_0^1 \left| \nabla \mathcal{A}_{j,s}^{x,w,t}(uy + (1-u)c_Q) \right| du.
\]
Using this estimate we have 
\begin{align*}
\big|P_{j-s+\ell(s)} T_j^s b_Q(x)\big|
&\lesssim 2^{j-s}\int |\vartheta_s(t)| \int_{\mathbb{R}^d}\int_Q \int_0^1 |a(tw) b_Q(y) \nabla \mathcal{A}_{j,s}^{x,w,t}(uy + (1-u)c_Q)| du\, dy\,dw\,dt\\
&\lesssim 2^{j-s}\int_0^1\int |\vartheta_s(t)|\int_Q |b_Q(y)| \int_{\mathbb{R}^d}\\&\quad\Big(
\tfrac{1-t}{t}
\left| \nabla \Phi_{j-s+\ell(s)}\!\left(x - w - \tfrac{1-t}{t}(uy +(1-u)c_Q)\right) \right|
|K_j^s(w-\tfrac{uy+(1-u)c_Q}{t})|\,\\&\quad +\tfrac{1}{t}
\left| \nabla \Phi_{j-s+\ell(s)}\!\left(x-w-\tfrac{1-t}{t}(uy+(1-u)c_Q)\right) \right|\\&
\quad \left| \nabla K_j^s\!\left(w-\tfrac{uy+(1-u)c_Q}{t}\right) \right|\ \Big)\ dw\ dy\ dt\ du.
\end{align*}
This gives us 
\begin{align*}
&\|P_{j-n+\ell(n)} T_j^n b_Q\|_{L^1(\mathbb{R}^d)}\\
&\lesssim 2^{j-s} \int_0^1 \int|\vartheta_s(t)|\Bigg[
\|\nabla \Phi_{j-s+\ell(s)}\|_{L^1(\mathbb{R}^d)}\tfrac{1-t}{t}
\int_Q |b_Q(y)| \int_{\mathbb{R}^d}\left| K_j^s\!\left(w-\tfrac{uy+(1-u)c_Q}{t}\right)\right|dw\, dy \\
&\qquad + \|\Phi_{j-s+\ell(s)}\|_{L^1(\mathbb{R}^d)}\tfrac{1}{t}
\int_Q |b_Q(y)|
\int\left|\nabla K_j^s\!\left(w-\tfrac{uy+(1-u)c_Q}{t}\right) \right|\ dw \ dy
\Bigg] dt\ du\\
&\lesssim 2^{j-s} \big( 2^{-(j-s+\ell(s))}\|K_j^s\|_{L^1(\mathbb{R}^d)}+\|\nabla K_j^s\|_{L^1(\mathbb{R}^d)}\big) \int\frac{|\vartheta_s(t)|}{t}\ dt\ \|b_Q\|_{L^1(\mathbb{R}^d)}.
\end{align*}
Using the $\operatorname{supp}K^s_j$ and (\ref{3.1.2}) we have 
\[
\|\nabla K_j^s\|_{L^1(\mathbb{R}^d)}
\lesssim  2^{-(j-\ell_{\varepsilon}(s))},
\]
\[
\int |\vartheta_s(t)| \,\frac{dt}{t}
\le \int_{s^{-11}}^{1-s^{-11}} |\vartheta_s(t)| \,\frac{dt}{t}
\le \log(s^{11}-1)
\lesssim \log s.
\]
Since  $2^{\ell_{\varepsilon}(s)-s}\lesssim s^{-11},$ the estimates above yield  
\begin{align*}
    \|P_{j-n+\ell(n)} T_j^n b_Q\|_{L^1(\mathbb{R}^d)}
    &\lesssim \log s\ (2^{-\ell(s)}+2^{\ell_{\varepsilon}(s)-s})\ \|b_Q\|_{L^1(\mathbb{R}^d)}\\
    &\lesssim s^{-11}\log s\ \|b_Q\|_{L^1(\mathbb{R}^d)}.
\end{align*}
This completes the proof. 
\section{Proof of Lemma \ref{lem 3.3}}
Let $Q \in \mathcal{Q}$ , $\ell(Q)=2^{j-s}$ and $c_Q$ be the centre of $Q$. Let $\nu$ be a unit vector, and let $\pi_\nu^\perp$ denote the projection onto the orthogonal complement of $\nu$, i.e., $
\pi_\nu(x)=x-\langle x,\nu\rangle \nu.$ We may assume
$\operatorname{supp} b_Q \subset \{ y \in \mathbb{R}^d : |\langle y - c_Q,\nu \rangle| \le 2^{j+4} d\ ,|\pi_\nu^\perp(y - c_Q)| \leq 2^{j+4-s\gamma}d\}.$
Note that with this hypothesis, $a \in L^1(\mathbb{R}^d)$ and
\[\|\widehat{a}\|_{L^\infty(\mathbb{R}^d)} \le \|a\|_{L^1(\mathbb{R}^d)} \lesssim 2^{jd- s\gamma(d-1)}.\tag{3.3.1}\label{3.3.1}\]
It suffices to  prove the following estimate under this hypothesis for $s > s(\varepsilon)$,
\[
\left\| (I - P_{j-s+\ell(s)})(I - G_{s,\nu}) T_j^{s,\nu} b_Q \right\|_{L^1(\mathbb{R}^d)}
\lesssim 2^{-11 - s\gamma(d-1)} \|b_Q\|_{L^1(\mathbb{R}^d)}.\tag{6.2}\label{6.2}
\]
Next, we decompose the function $a$ in the frequency variable. Let $\varphi$ be a radial $C^\infty(\mathbb{R}^d)$ function such that $\varphi(\xi)=1$ for $|\xi|\le 1$ and $\varphi(\xi)=0$ for $|\xi|\ge \tfrac{6}{5}$. Define $\beta_k(\xi)=\varphi(2^{-k}\xi)-\varphi(2^{-k+1}\xi)$, then $\operatorname{supp}\beta_k \subset \{\xi \in \mathbb{R}^d : 2^{k-1}\le |\xi|\le \tfrac{6}{5}2^k\}$. Let $\tilde{\beta}$ be a radial $C^\infty(\mathbb{R}^d)$ function such that $\operatorname{supp}\tilde{\beta} \subset \{\xi \in \mathbb{R}^d : \tfrac{1}{3}\le |\xi|\le \tfrac{3}{2}\}$ and $\tilde{\beta}(\xi)=1$ for $\tfrac{1}{2}\le |\xi|\le \tfrac{6}{5}$. Define $\tilde{\beta}_k(\xi)=\tilde{\beta}(2^{-k}\xi)$, so that $\operatorname{supp}\tilde{\beta}_k \subset \{\xi \in \mathbb{R}^d : \tfrac{1}{3}2^k \le |\xi|\le \tfrac{3}{2}2^k\}$. So $\tilde{\beta}_k\,\beta_k = \beta_k$ for all $k \in \mathbb{Z}.$
Define operators $V_k$, $\Lambda_k$ and $\widetilde{\Lambda}_k$ with Fourier multipliers $\varphi(2^k\cdot)$, $\beta_k$ and $\tilde{\beta}_k$ respectively. 

Note that $\Lambda_k \widetilde{\Lambda}_k = \Lambda_k$ for all $k \in \mathbb{Z}$ and for every $m \in \mathbb{Z}$,
\[
\varphi(2^m \xi) + \sum_{k<m} \beta_k(\xi)
= \lim_{n \to \infty} \varphi(2^{m-n}\xi)
= \varphi(0)=1.
\]
Consequently, $$I = V_k + \sum_{k<m} \Lambda_k.$$
For a fixed $y \in Q$, we define the operator $\mathcal{K}^{s,\nu}_j$ by
\[
\mathcal{K}^{s,\nu}_{j,y}[a](x)
:= K_j^{s,\nu}(x-y)
\int_0^1 \vartheta_s(t)\, a(tx+(1-t)y)\,dt \
b(y)\,dy.
\]
This gives us 
\[\tag{3.3.3}\label{3.3.3}
T_j^s b_Q(x)
= \int_Q b_Q(y) \mathcal{K}^{s,\nu}_{j,y}[b_Q](x)\ dy.
\]

Note that
\[
\sum_{k} \beta_k(\xi)
= \sum_{k} \big(\varphi(2^k\xi)-\varphi(2^{k+1}\xi)\big)
= \lim_{N\to\infty} \big(\varphi(2^{-N}\xi)-\varphi(2^{N+1}\xi)\big)
=
\begin{cases}
1, & \xi \neq 0, \\
0, & \xi = 0.
\end{cases}
\]
Using above decompositions we write 
\begin{align*}
   & (I - G_{s,\nu})(I - P_{j-s+\ell(s)}) T_j^{s,\nu} b_Q\\
&= \sum_{k_1} \Lambda_{k_1} (I - G_{s,\nu}) \widetilde{\Lambda}_{k_1} (I - P_{j-s+\ell(s)}) \int_Q b_Q(y) \mathcal{K}^{s,\nu}_{j,y}[b_Q]\ dy.\tag{3.3.4}\label{3.3.4}
\end{align*}
\[a = V_{j-s+\ell(s)} a
+ \sum_{k_2 < j-s+\ell(s)} \Lambda_{k_2} a. \tag{3.3.5}\label{3.3.5}
\]
Next, in order to obtain the desired estimates for the terms in the above, we prove three estimates in the form of Lemmata.
\begin{lemma}\label{lem6.1} The following estimate holds. 
    \[
\left\| \int_Q b_Q(y)\mathcal{K}^{s,\nu}_{j,y}\!\left[ V_{j-s+\ell(s)} a \right](y)\,dy \right\|_{L^1(\mathbb{R}^d)}
\lesssim s^{-11}2^{- s\gamma(d-1)}  \|b_Q\|_{L^1(\mathbb{R}^d)}.
\]
\end{lemma}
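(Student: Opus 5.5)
The plan is to exploit the cancellation $\int_Q b_Q=0$ together with the smoothness of both factors in the kernel. The symbol $V_{j-s+\ell(s)}a$ is smooth at scale $2^{j-s+\ell(s)}$, which is much larger than $\ell(Q)=2^{j-s}$. The kernel $K_j^{s,\nu}$ is smooth at scale $\min(2^{j-\ell_\varepsilon(s)},2^{j-s\gamma})$, also much larger than $2^{j-s}$.

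I read the quantity in the lemma as the function
\[
x\mapsto \int_Q b_Q(y)\,K_j^{s,\nu}(x-y)\int_0^1\vartheta_s(t)\,V_{j-s+\ell(s)}a\big(tx+(1-t)y\big)\,dt\,dy ,
\]
where $K_j^{s,\nu}$ is built from the smoothed kernel $K_j^s$ times $\chi_{s,\nu}$. Set
\[
F_x(y):=K_j^{s,\nu}(x-y)\int_0^1\vartheta_s(t)\,V_{j-s+\ell(s)}a(tx+(1-t)y)\,dt .
\]
Since $\int_Q b_Q=0$, the quantity equals $\int_Q b_Q(y)\,[F_x(y)-F_x(c_Q)]\,dy$. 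By the mean value theorem this is at most
\[
2^{j-s}\int_Q|b_Q(y)|\int_0^1|\nabla_y F_x(uy+(1-u)c_Q)|\,du\,dy .
\]
It therefore suffices to show that, uniformly for $z$ near $Q$,
\[
\int_{\mathbb R^d}|\nabla_yF_x(z)|\,dx\lesssim 2^{-(j-s)}\,s^{-11}\,2^{-s\gamma(d-1)} .
\]
Fubini then gives the lemma.

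The gradient splits by the product rule into two terms.

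\emph{Term 1: derivative on the kernel.} By \eqref{3.1.2} and \eqref{3.1.6},
\[
|\nabla K_j^{s,\nu}|\lesssim 2^{-jd}\big(2^{-(j-\ell_\varepsilon(s))}+2^{s\gamma-j}\big).
\]
By \eqref{3.1.7}, this is supported in a sector of measure about $2^{jd}2^{-s\gamma(d-1)}$. Since $|V_{j-s+\ell(s)}a|\le\|a\|_\infty$ and $\int|\vartheta_s|\le1$, the $x$-integral of this term is
\[
\lesssim 2^{-s\gamma(d-1)}\big(2^{\ell_\varepsilon(s)-j}+2^{s\gamma-j}\big).
\]
Multiplying by $2^{j-s}$ gives $2^{-s\gamma(d-1)}\big(2^{\ell_\varepsilon(s)-s}+2^{-(1-\gamma)s}\big)$. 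Because $\gamma<9/10$ and $s>s(\varepsilon)$ is huge, both $2^{-s/10}$ and $2^{\ell_\varepsilon(s)-s}$ are $\lesssim s^{-11}$, as already used in the proof of Lemma \ref{lem 3.2}.

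\emph{Term 2: derivative on the symbol.} Here $|(1-t)\nabla V_{j-s+\ell(s)}a|\lesssim 2^{-(j-s+\ell(s))}\|a\|_\infty$, since $V_k$ is convolution with an $L^1$-normalized Schwartz function at scale $2^k$. This multiplies $|K_j^{s,\nu}(x-z)|$, whose $x$-integral is $\lesssim 2^{-s\gamma(d-1)}$ by the sector support. Multiplying by $2^{j-s}$ yields $2^{-\ell(s)}2^{-s\gamma(d-1)}\lesssim s^{-11}2^{-s\gamma(d-1)}$ by \eqref{3.1.1}.

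The main obstacle is Term 1: the angular cutoff $\chi_{s,\nu}$ costs a factor $2^{s\gamma}$ per derivative. This is absorbed only because $\gamma<9/10$ leaves the gain $2^{-(1-\gamma)s}$. One must also check that the $L^1$ size of the sector, not of the full annulus, is what appears, so that the factor $2^{-s\gamma(d-1)}$ survives in both terms.
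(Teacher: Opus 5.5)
Your proposal is correct and follows the paper's proof essentially step for step. Both arguments subtract the value at $c_Q$ using $\int_Q b_Q=0$ and apply the mean value theorem. For the kernel term both use the sector bound $\|\nabla K_j^{s,\nu}\|_{L^1}\lesssim (2^{\ell_\varepsilon(s)-j}+2^{s\gamma-j})2^{-s\gamma(d-1)}$, and for the symbol term both use $\|\nabla\check{\varphi}_{j-s+\ell(s)}\|_{L^1}\lesssim 2^{s-j-\ell(s)}$ together with $\|K_j^{s,\nu}\|_{L^1}\lesssim 2^{-s\gamma(d-1)}$. The only cosmetic difference is that you differentiate the product $F_x$ in one step, whereas the paper first adds and subtracts to split the difference into $\mathcal{E}_1(y)$ and $\mathcal{E}_2(y)$ and then applies the mean value theorem to each.
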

\begin{proof}
By the cancellation property of $b_Q$, we have 
\begin{align*}
&\left\| \int b_Q(y)\mathcal{K}^{s,\nu}_{j,y}\!\left[ V_{j-s+\ell(s)} a \right](x)\,dy \right\|_{L^1(\mathbb{R}^d)}\\ 
\le& \int_{\mathbb{R}^d} \int_Q |b_Q(y)|
\left| \mathcal{K}^{s,\nu}_{j,y}\!\left[ V_{j-s+\ell(s)} a \right](x)
- \mathcal{K}^{s,\nu}_{j,c_Q}\!\left[ V_{j-s+\ell(s)} a \right](x) \right| dy\,dx \\
=&\int_Q |b_Q(y)| \int_{\mathbb{R}^d} 
\left| \mathcal{K}^{s,\nu}_{j,y}\!\left[ V_{j-s+\ell(s)} a \right](x)
- \mathcal{K}^{s,\nu}_{j,c_Q}\!\left[ V_{j-s+\ell(s)} a \right](x) \right| dx\,dy.
\end{align*}
For $ y \in Q $ we estimate
\begin{align*}
&\int_{\mathbb{R}^d} \Big|
\mathcal{K}^{s,\nu}_{j,y}\!\left[ V_{j-s+\ell(s)} a \right](x)
- \mathcal{K}^{s,\nu}_{j,c_Q}\!\left[ V_{j-s+\ell(s)} a \right](x)
\Big| dx \\
&\leq\int_{\mathbb{R}^d} \Big|
\big( K_j^{s,\nu}(x-y) - K_j^{s,\nu}(x-c_Q) \big)
\int_0^1\vartheta_s(t)\, V_{j-s+\ell(s)}a(tx+(1-t)y)\,dt \\
&\qquad + K_j^{s,\nu}(x-c_Q)
\int_0^1\vartheta_s(t)
\big( V_{j-s+\ell(s)}a(tx+(1-t)y)
- V_{j-s+\ell(s)}a(tx+(1-t)c_Q) \big)\,dt
\Big| dx \\
&\leq \mathcal{E}_1(y) + \mathcal{E}_2(y),
\end{align*}
where 
\begin{align*}
&\mathcal{E}_1(y)
:= \|V_{j-s+\ell(s)}a\|_{L^{\infty}(\mathbb{R}^d)}\int_{\mathbb{R}^d} \left|
K_j^{s,\nu}(x-y) - K_j^{s,\nu}(x-c_Q)
\right| dx\quad\text{and}\\
&\mathcal{E}_2(y)
:= \int_{\mathbb{R}^d} \left|
K_j^{s,\nu}(x-c_Q)\,\int_0^1\vartheta_s(t)
\big(
V_{j-s+\ell(s)}a(tx+(1-t)y)
- V_{j-s+\ell(s)}a(tx+(1-t)c_Q)
\big)\ dt
\right| dx.
\end{align*}
\subsection*{Estimate of $\mathcal{E}_1$} By the mean value theorem we have 
\[
\int_{\mathbb{R}^d} \left| K_j^{s,\nu}(x-y) - K_j^{s,\nu}(x-c_Q) \right| dx
\le|y - c_Q| \, \|\nabla K_j^{s,\nu}\|_{L^1(\mathbb{R}^d)}.
\]
By (\ref{3.1.2}),(\ref{3.1.6})
\[
|\nabla K_j^{s,\nu}(x)|
\lesssim 2^{-jd}\,2^{\ell_{\varepsilon}(s)-j} + 2^{-jd}\,2^{s\gamma}|x|^{-1}.
\]
Since 
\begin{align*}
\int_{\operatorname{supp} K_j^{s,\nu}} |x|^{-1}\,dx
&\le \int_{\{\theta \in \mathbb{S}^{d-1} : |\theta - \nu| \le 2^{-2-s\gamma}\}} \int_{2^{j-2}}^{2^{j+2}} \frac{r^{d-1}}{r}\,dr\ d\theta\\
&\lesssim 2^{j(d-1)} (2^{-s\gamma})^{d-1}.
\end{align*}
and 
\[
\int_{\operatorname{supp} K_j^{s,\nu}} dx \;\lesssim\; 2^{jd}\,(2^{-s\gamma})^{d-1}.
\]
We get that 
\[
\|\nabla K_j^{s,\nu}\|_{L^1(\mathbb{R}^d)}
\lesssim \big( 2^{\ell_{\varepsilon}(s)-j} + 2^{s\gamma - j} \big)\,2^{-s\gamma\ (d-1)}.
\]
Next, for $k\in \mathbb{Z}$ denote $\check{\varphi}_k:=2^{-kd} \check{\varphi}(2^{-k}\cdot)$ and observe that 
\begin{align*}
\|V_k a\|_{L^\infty(\mathbb{R}^d)}
&\le \|a\|_{L^\infty(\mathbb{R}^d)} \,\|\check{\varphi}_k\|_{L^1(\mathbb{R}^d)} \lesssim 1.
\end{align*}
Therefore, for $s > s(\varepsilon)$ and $\gamma > 1/10$, we have $2^{\ell_\varepsilon(s)} \lesssim 2^{\gamma s}$,
\begin{align*}
|\mathcal{E}_1(y)|
&\le |y - c_Q|\big(2^{\ell_{\varepsilon}(s)-j} + 2^{s\gamma - j}\big)\,2^{-s\gamma(d-1)} \\
&\lesssim \big(2^{\ell_{\varepsilon}(s)-s} + 2^{s\gamma - s}\big)\,2^{-s\gamma(d-1)}\\
&\lesssim s^{-11}\ 2^{-s\gamma(d-1)}.
\end{align*}
\subsection*{Estimate for $\mathcal{E}_2$} By the mean value theorem, we get that 
\begin{align*}
&\left| V_{j-s+\ell(s)}a(tx+(1-t)y)
- V_{j-s+\ell(s)}a(tx+(1-t)c_Q) \right|\\
\leq&\int_{\mathbb{R}^d} \big|\check{\varphi}_{j-s+\ell(s)}(tx+(1-t)y-z)
 - \check{\varphi}_{j-s+\ell(s)}(tx+(1-t)c_Q-z)\big|dz\\
\leq &|y-c_Q| \, \|\nabla \check{\varphi}_{j-s+\ell(s)}\|_{L^1(\mathbb{R}^d)}.
\end{align*}
Since $\|\vartheta_s\|_{L^1(\mathbb{R}^d)} \leq 1$, we get that 
\begin{align*}
\mathcal{E}_2 &\le \|K_j^{s,\nu}\|_{L^1(\mathbb{R}^d)} \, |y-c_Q| \,
\|\nabla \check{\varphi}_{j-s+\ell(s)}\|_{L^1(\mathbb{R}^d)} \\
&\lesssim 2^{-s\gamma(d-1)}  \ 2^{j-s} \ 2^{s-j-\ell(s)} \\
&\lesssim s^{-11}\ 2^{- s\gamma(d-1)}.
\end{align*}
Finally, the lemma follows by integrating in $y$. This completers the proof.
\end{proof}
\begin{lemma}\label{lem7.2}Let $y \in Q$ and the function $a$ as in (\ref{3.3.1}).
\begin{enumerate}[(i)]
    \item Let $k_1> k_2+\ell(s)+10.$ Then
\begin{align*}\big\| \Lambda_{k_1}\mathcal{K}_{j,y}^{s,\nu}[ \Lambda_{k_2} a ] \big\|_{L^1(\mathbb{R}^d)}
&\leq C_N\,2^{-s\gamma(d-1)}\,
\min\!\left\{1,\; 2^{11d+11N}\,2^{ s\gamma}\,2^{(k_2-j+s\gamma)N}\right\}.
\end{align*}
\item Let $k_1<k_2-10$. Then
\begin{align*}\big\| \Lambda_{k_1}\mathcal{K}_{j,y}^{s,\nu}[ \Lambda_{k_2} a ] \big\|_{L^1(\mathbb{R}^d)}
+ \big\| \Lambda_{k_1}\mathcal{K}_{j,y}^{s,\nu}[ V_{k_2} a ] \big\|_{L^1(\mathbb{R}^d)} 
&\le C_N\,2^{-s\gamma(d-1)}\,
\min\!\left\{1,\; 2^{s\gamma}\,2^{(k_1-k_2)d}\,2^{(k_1-j+s\gamma)N}\right\}.
\end{align*}
\end{enumerate}
\end{lemma}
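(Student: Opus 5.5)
The estimate will come from writing the kernel of $\Lambda_{k_1}\mathcal{K}^{s,\nu}_{j,y}[\Lambda_{k_2}a]$ in frequency variables and comparing the frequency scale $2^{k_1}$ of the output with the scale $2^{k_2}$ of the symbol. Fix $y\in Q$ and substitute $z=tx+(1-t)y$, so that
\[
\mathcal{K}^{s,\nu}_{j,y}[\Lambda_{k_2}a](x)=K_j^{s,\nu}(x-y)\int \vartheta_s(t)\int \widehat{\Lambda_{k_2}a}(\eta)\,e^{2\pi i\langle \eta,tx+(1-t)y\rangle}\,d\eta\,dt.
\]
For each fixed $t$ this is the product of the localized kernel $K_j^{s,\nu}(\cdot-y)$ with a function whose frequencies lie in $\{|\xi|\approx 2^{k_2}t\}$. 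The Fourier transform in $x$ is therefore $\widehat{K_j^{s,\nu}}$ convolved with the image of $\widehat{\Lambda_{k_2}a}$ under $\eta\mapsto t\eta$.

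The trivial part of the $\min$ is the bound $1$. It follows from $\|\Lambda_{k_1}\|_{L^1\to L^1}\lesssim 1$, $\|\Lambda_{k_2}a\|_\infty\lesssim 1$, $\|\vartheta_s\|_{L^1}\le 1$ and $\|K_j^{s,\nu}\|_{L^1}\lesssim 2^{-s\gamma(d-1)}$, the last from \eqref{3.1.7} and \eqref{1.1}. For the decaying bound I will use the product structure and integrate by parts. The factor $K_j^{s,\nu}$ is supported in a tube of length $2^j$ in direction $\nu$ and cross-section $2^{j-s\gamma}$. By \eqref{3.1.2}, \eqref{3.1.5} and \eqref{3.1.6}, each derivative costs at most $2^{-j+s\gamma}$ (the $2^{\ell_\varepsilon(s)-j}$ loss is dominated since $2^{\ell_\varepsilon(s)}\lesssim 2^{s\gamma}$). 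Hence $\widehat{K_j^{s,\nu}}$ decays like $(2^{j-s\gamma}|\xi|)^{-N}$, and its $L^1$-mass is concentrated at $|\xi|\lesssim 2^{s\gamma-j}$.

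Case (i), $k_1>k_2+\ell(s)+10$. The product has frequencies of size at most $2^{k_2+1}$ plus those of $K_j^{s,\nu}$. To reach $|\xi|\approx 2^{k_1}\gg 2^{k_2}$, the kernel factor must carry frequency of size about $2^{k_1}$. I will write $\Lambda_{k_1}=\Delta^{M}\circ(\text{a multiplier of size }2^{-2k_1M})$ and let the derivatives fall on the product via Leibniz. Derivatives on $K_j^{s,\nu}$ give $2^{(s\gamma-j)}$ per derivative. Derivatives on $\Lambda_{k_2}a(tx+(1-t)y)$ give $t\,2^{k_2}$, which is fine since $t\le1$. The outcome is a gain $(2^{-k_1}\max\{2^{k_2},2^{s\gamma-j}\})^{N}$. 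The stated form $2^{(k_2-j+s\gamma)N}$ with prefactor $2^{11d+11N}2^{s\gamma}$ indicates that the authors instead exploit $k_1-k_2>\ell(s)\ge 11\log_2 s$. I expect the extra $2^{11N}$ and $2^{11d}$ to come from the $t^{-1}$ factors produced when one changes variables in $t$ near $t\approx s^{-11}$, the edge of $\operatorname{supp}\vartheta_s$. I would track these $t^{-d}$ and $t^{-N}$ losses through \eqref{3.1.4}.

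Case (ii), $k_1<k_2-10$. The symbol oscillates faster than the output frequency, so integrating by parts in $t$ gives decay. I will use $\partial_t[\Lambda_{k_2}a(tx+(1-t)y)]=\langle x-y,\nabla\rangle\Lambda_{k_2}a$, with $|x-y|\approx 2^j$ in direction $\nu$. Equivalently, I will average $\widehat{\Lambda_{k_2}a}(\eta)$ over the dilates $t\eta$: the $\eta$-Fourier support of order $2^{k_2}$ must be compressed by $t\lesssim 2^{k_1-k_2}$ to land at frequency $2^{k_1}$. The $t$-measure of this set, together with the Jacobian, produces the factor $2^{(k_1-k_2)d}$. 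The remaining factor $2^{(k_1-j+s\gamma)N}$ comes from the smallness of $\Lambda_{k_1}$ applied to something smooth at scale $2^{j-s\gamma}$, which I get by putting vanishing moments of $\Lambda_{k_1}$ against derivatives of $K_j^{s,\nu}$. The $V_{k_2}$ term is handled the same way, because $V_{k_2}a$ has frequencies $\le 2^{k_2+1}$ and after dilation by $t$ only the region $t\lesssim 2^{k_1-k_2}$ interacts.

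The main obstacle is case (ii): extracting the $2^{(k_1-k_2)d}$ gain rigorously while keeping the angular loss to a single factor $2^{s\gamma}$. I would first try to bound $\|\Lambda_{k_1}(F\cdot G_t)\|_{L^1}$ directly, where $G_t=\Lambda_{k_2}a(t\cdot+(1-t)y)$ and $F=K_j^{s,\nu}(\cdot-y)$, and then integrate in $t$ over the short range $t\lesssim 2^{k_1-k_2}$. On the complementary range the frequency supports are disjoint up to the Schwartz tails of $\widehat F$.
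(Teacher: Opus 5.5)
There is a genuine gap, and it starts with the frequency normalization. The printed definition $\beta_k(\xi)=\varphi(2^{-k}\xi)-\varphi(2^{-k+1}\xi)$ is a slip. Everything around the lemma requires $\beta_k=\varphi(2^k\cdot)-\varphi(2^{k+1}\cdot)$, so that $\Lambda_k$ lives at $|\xi|\approx 2^{-k}$:
the identity $\varphi(2^m\xi)+\sum_{k<m}\beta_k(\xi)=1$, the splitting \eqref{3.3.5}, the multiplier $\varphi(2^k\cdot)$ of $V_k$, the bound \eqref{3.3.11}, and the proof of this lemma itself, which uses $2^{-k_1-1}\le|\xi|\le\tfrac{6}{5}2^{-k_1}$ and $2^{-k_2-1}\le|\eta|\le\tfrac{6}{5}2^{-k_2}$.

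With the correct normalization the two cases look like this.
\begin{itemize}
\item In (i) the output frequency $2^{-k_1}$ lies far \emph{below} the symbol frequency $2^{-k_2}$. The buffer $\ell(s)$ is needed because $t\in\operatorname{supp}\vartheta_s$ can be as small as $s^{-11}\approx 2^{-\ell(s)}$. One still has $|\xi-t\eta|\ge t|\eta|-|\xi|\gtrsim 2^{-k_2-\ell(s)}$, and the resulting loss $2^{\ell(s)(N+d)}\lesssim s^{11(N+d)}$ is the factor misprinted as $2^{11d+11N}$.
\item In (ii) the output frequency is above the symbol frequency, and $|\xi-t\eta|\gtrsim 2^{-k_1}$ simply because $t\le 1$.
\end{itemize}
Your reading interchanges the two regimes, so the factors $2^{(k_2-j+s\gamma)N}$ and $2^{(k_1-j+s\gamma)N}$ have no reason to appear. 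In fact the inequalities are then false. For the $V_{k_2}$ term in (ii), take $a$ a smooth bump on the relevant tube, $k_2=0$, $k_1=-j-10$ and $j\to\infty$; the left side stays of size $|\int K_j^{s,\nu}|$, which is nonzero in general, while the right side tends to $0$. This is why your case (i) heuristic produces $(2^{-k_1}\max\{2^{k_2},2^{s\gamma-j}\})^N$ and you are left guessing where the stated form comes from. Likewise, integrating by parts in $t$, as you propose for (ii), only gains powers of $|\langle w-y,\eta\rangle|^{-1}$. That is useless for $\eta$ nearly orthogonal to $\nu$, which is exactly why Lemma~\ref{lem 6.3} has to split off $S_{s,\nu}a$.

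The second missing ingredient is the hypothesis \eqref{3.3.1}, which your plan never uses. You use only $\|\Lambda_{k_2}a\|_\infty\lesssim1$, which does correctly give the trivial bound. The paper proceeds as follows.
\begin{itemize}
\item It writes $\Lambda_{k_1}\mathcal{K}^{s,\nu}_{j,y}[\Lambda_{k_2}a](x)$ by Fourier inversion as an integral in $(t,\eta,\xi,w)$ of $\vartheta_s(t)\beta_{k_2}(\eta)\hat a(\eta)\beta_{k_1}(\xi)K_j^{s,\nu}(w-y)\,e^{2\pi i(\langle x-w,\xi\rangle+\langle tw+(1-t)y,\eta\rangle)}$.
\item It integrates by parts in $w$ with $|\xi-t\eta|^{-2N_2}(-\Delta_w)^{N_2}$; each derivative on $K_j^{s,\nu}$ costs $\lesssim 2^{s\gamma-j}$.
\item It integrates by parts in $\xi$ to make the $x$-integral converge.
\item It bounds the $\eta$-integral by $|\operatorname{supp}\beta_{k_2}|\,\|\hat a\|_\infty\lesssim 2^{-k_2d}\,2^{jd-s\gamma(d-1)}$.
\end{itemize}
This gives $2^{-2s\gamma(d-1)}\,2^{(j-k_2)d}\,(2^{s\gamma-j}/\mathcal{C})^{2N_2}$, with $\mathcal{C}=2^{-k_2-\ell(s)}$ in (i) and $\mathcal{C}=2^{-k_1-2}$ in (ii).

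The possibly large factor $2^{(j-k_2)d}$ is then absorbed by giving up $d$ of the $2N_2$ powers, with $N=2N_2-d$; in (ii) one writes it as $2^{(k_1-k_2)d}2^{(j-k_1)d}$. This bookkeeping is precisely the source of $2^{s\gamma}$ in both parts, of $2^{d\ell(s)}\lesssim s^{11d}$ in (i), and of $2^{(k_1-k_2)d}$ in (ii). No measure of a $t$-set, no Jacobian and no use of \eqref{3.1.4} is involved, so your proposed origins for these factors are not correct.

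Finally, your Leibniz argument, even placed in the right regime (output frequency above symbol frequency), yields only $\max\{2^{(k_1-k_2)N},2^{(k_1-j+s\gamma)N}\}$. It does not give a bound in which every $k_1<k_2-10$ carries the factor $2^{(k_1-j+s\gamma)N}$, and that is what the summation leading to \eqref{3.3.13} needs.
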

\begin{proof}
Note that $\mathcal{K}_{j,y}^{s,\nu}[a](x)
= K_j^{s,\nu}(x-y)\int \vartheta_s(t)\,a(tx+(1-t)y)\,dt.$ Therefore, we get that 
\begin{align*}
 \|\mathcal{K}_{j,y}^{s,\nu}[a]\|_{L^1(\mathbb{R}^d)}
&\le \|a\|_{L^\infty(\mathbb{R}^d)} \|K_j^{s,\nu}\|_{L^1(\mathbb{R}^d)}.
\end{align*}
For any $k \in \mathbb{Z},$ we have 
$\|\Lambda_k g\|_{L^\infty(\mathbb{R}^d)}
\lesssim \|g\|_{L^\infty(\mathbb{R}^d)}$ and 
$\|\Lambda_k g\|_{L^1(\mathbb{R}^d)}
\lesssim \|g\|_{L^1(\mathbb{R}^d)}$. Consequently, 
\begin{align*}
\|\Lambda_{k_1}\mathcal{K}_{j,y}^{s,\nu}[\Lambda_{k_2} a]\|_{L^1(\mathbb{R}^d)}
&\lesssim \|\mathcal{K}_{j,y}^{s,\nu}[\Lambda_{k_2} a]\|_{L^1(\mathbb{R}^d)} \\
&\lesssim\|\mathcal{K}_{j}^{s,\nu}\|_{L^1(\mathbb{R}^d)}
\|\Lambda_{k_2} a\|_{L^\infty(\mathbb{R}^d)} \\
&\lesssim 2^{-s\gamma(d-1)}.
\end{align*}
Note that similar estimate holds for the second term in $(ii)$.

Further, since $
\|\widehat{\Lambda_{k_1} g}\|_{L^1(\mathbb{R}^d)}
\le \|\beta_{k_1}\|_{L^1(\mathbb{R}^d)} \|\hat{g}\|_{L^1(\mathbb{R}^d)}
\le \|\beta_{k_1}\|_{L^1(\mathbb{R}^d)} \|f\|_{L^{\infty}(\mathbb{R}^d)}$, by applying the Fourier inversion formula, Tonelli's theorem and integration by parts, we get that 
\begin{align*}
&\Lambda_{k_1}\mathcal{K}_{j,y}^{s,\nu}[\Lambda_{k_2} a](x)\\
&= \frac{1}{(2\pi)^{2N_2}}\int \vartheta_s(t)
\int_{\mathbb{R}^d}\beta_{k_2}(\eta)\,\hat{a}(\eta)\,\int_{\mathbb{R}^d}\int_{\mathbb{R}^d}
\Big[e^{2\pi i  (\langle x-w,\xi\rangle+ \langle tw+ (1-t)y,\eta\rangle)} \\
&\quad \times 
\frac{\left(I - \frac{2^{-2k_1}}{4\pi^2}\Delta_\xi\right)^{N_1}
\Big( \beta_{k_1}(\xi)\,|\xi - t\eta|^{-2N_2} \Big)\,(-\Delta_w)^N K_j^{s,\nu}(w-y)}{\big(1+2^{-2k_1}|x-w|^2\big)^{N_1}}dw \Big]d\xi
\,d\eta\,dt.
\end{align*}
Note that for $t \in \operatorname{supp} \vartheta_s, \eta \in \operatorname{supp}\beta_{k_2}$ and $\xi \in \operatorname{supp}\beta_{k_1}$, we have that $2^{-k_2-1} \le |\eta| \le \tfrac{6}{5}2^{-k_2}$ and  $2^{-k_1-1} \le |\xi| \le \tfrac{6}{5}2^{-k_1}$. Therefore, 
\begin{align*}
|\xi - t\eta|
&\gtrsim \mathcal{C}(k_1,k_2,s)
:=
\begin{cases}
2^{-k_2-\ell(s)}, & \text{if } k_2 > k_1 + \ell(s) + 10, \\[4pt]
2^{-k_1-2}, & \text{if } k_1 < k_2 - 10.
\end{cases}
\end{align*}
Also by (\ref{3.1.3}) ,(\ref{3.1.6}) and (\ref{3.1.7})
\[
\|(-\Delta_w)^N K_j^{s,\nu}\|_{L^1(\mathbb{R}^d)}
\lesssim 2^{-s\gamma(d-1)}\,2^{2N(\ell_\varepsilon(s)-j)}.\]
Now since $\left(2^{-k_1}\partial_\xi\right)^{N_3} \beta_{k_1}= O(1)$, we get
\[
\left|
\left(I - \tfrac{2^{-2k_1}}{4\pi^2}\Delta_\xi\right)^{N_1}
\big[ \beta_{k_1}(\xi)\,|\xi - t\eta|^{-2N_2} \big]
\right|
\lesssim [\mathcal{C}\big(k_1,k_2,s\big)]^{-2N_2}.
\]
\begin{align*}
\|\Lambda_{k_1}\mathcal{K}_{j,y}^{s,\nu}[\Lambda_{k_2} a]\|_{L^1(\mathbb{R}^d)}
&= \int_{\mathbb{R}^d}
\big|\Lambda_{k_1}\mathcal{K}_{j,y}^{s,\nu}[\Lambda_{k_2} a](x)\big|\,dx \\
&\le \frac{1}{(2\pi)^{2N_2}}
\int_{\mathbb{R}^d}\int |\vartheta_s(t)|
\int_{\mathbb{R}^d}
|\beta_{k_2}(\eta)|\,|\hat{a}(\eta)|\int_{\mathbb{R}^d} \\
&\quad \times \int_{\mathbb{R}^d}
\left|
\left(I - \frac{2^{-2k_1}}{4\pi^2}\Delta_\xi\right)^{N_1}
\big[\beta_{k_1}(\xi)\,|\xi - t\eta|^{-2N_2}\big]
\right| \\
&\quad 
\frac{\left|(-\Delta_w)^{N_2} K_j^{s,\nu}(w-y)\right|}{\big(1+2^{-2k_1}|x-w|^2\big)^{N_1}},dw\, d\xi\,d\eta\,dt\,dx.
\end{align*}
 Take $N_1=[\frac{d}{2}]+1$. We integrate in $\eta$ and use that the measure of the support of $\beta_{k_2}$ is $O(2^{-k_2 d})$.
Then we integrate in $x$ and $\xi$ and use that
\[
\int_{\operatorname{supp}(\beta_{k_1})}
\int_{\mathbb{R}^d}
\left(1 + 2^{-2k_1}|x - w|^2\right)^{-N_1}
\,dx\,d\xi
= O(1).
\]
Using (\ref{3.3.1}) we then get
\begin{align*}
\|\Lambda_{k_1}\mathcal{K}_{j,y}^{s,\nu}[\Lambda_{k_2} a]\|_{L^1(\mathbb{R}^d)}
&\lesssim_{N_2}\,2^{-k_2 d}\,\|\hat{a}\|_{L^{\infty}(\mathbb{R}^d)}\,
\|(-\Delta)^{N_2} K_j^{s,\nu}\|_{L^1(\mathbb{R}^d)}\,[\mathcal{C}(k_1,k_2,s)]^{-2N_2}.
\end{align*}
\[
\lesssim_{N_2}
\begin{cases}
2^{d\ell(s)-s\gamma(d-2)}\,
2^{(2N_2-d)(k_2-j+\ell(s)+s\gamma)} 
& \text{if } k_1 > k_2 + \ell(s) + 10, \\[6pt]
2^{-s\gamma(d-2)}\,
2^{(2N_2-d)(k_1-j+s\gamma)}\,2^{(k_1-k_2)d} 
& \text{if } k_1 < k_2 - 10.
\end{cases}\]
\[
\lesssim_{N}\,2^{-s\gamma(d-1)}
\begin{cases}
s^{11(N+d)}\,2^{s\gamma}\,2^{(k_2-j+s\gamma)N}
& \text{if } k_1 > k_2+\ell(s)+10, \\[6pt]
2^{s\gamma}\,2^{(k_1-k_2)d}\,2^{(k_1-j+s\gamma)N}
& \text{if } k_1 < k_2-10.
\end{cases}
\]
Here we have taken $N=2N_2-d$ and $
2^{\ell(s)(N-d)}
\le 2^{([11\log_2 s] + 2)(N+d)}
\lesssim s^{11(N+d)}.$ The second term in $(ii)$ is estimated in the exact same way. This completes the proof.
\end{proof}

\begin{lemma}
\label{lem 6.3} Let $k_2 - 10 \le k_1 \le k_2 + \ell(s) + 10.$ Then for every $y\in Q$, the following estimate holds
\begin{align*}
 \big\|\Lambda_{k_1}(I - G_{s,\nu})\,\mathcal{K}_{j,y}^{s,\nu}[ \Lambda_{k_2} a ] \big\|_{L^1(\mathbb{R}^d)}
&\le C_N\,2^{-s\gamma(d-1)}\,
\min\,\left\{1,\; s^{\tfrac{11(N+d)}{\varepsilon}}
\,2^{(d+3)s\gamma}\,2^{(k_1 - j + s\gamma)N}\right\}.
\end{align*}
\end{lemma}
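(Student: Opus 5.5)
The bound by $1$ (times $2^{-s\gamma(d-1)}$) is the trivial one. The operators $\Lambda_{k_1}$ and $I-G_{s,\nu}$ are bounded on $L^1$ with constants uniform in $k_1$. For $I-G_{s,\nu}$ this is true once it is composed with $\Lambda_{k_1}$: the combined multiplier $\beta_{k_1}(\xi)\bigl(1-\phi(2^{s\gamma}s^{-14}\langle\nu,\xi/|\xi|\rangle)\bigr)$ is smooth at scale $2^{-k_1}$ in the $\nu$ direction and at scale $2^{-k_1-s\gamma}s^{14}$ in the orthogonal directions. Hence its kernel has $L^1$ norm $O(s^{C}2^{Cs\gamma})$ at worst, and this loss is absorbed by the second alternative of the minimum. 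Otherwise one simply bounds $\|\mathcal K^{s,\nu}_{j,y}[\Lambda_{k_2}a]\|_{L^1}\le\|K_j^{s,\nu}\|_{L^1}\|\Lambda_{k_2}a\|_\infty\lesssim 2^{-s\gamma(d-1)}$, exactly as at the beginning of Lemma~\ref{lem7.2}. In the trivial regime, where the second quantity exceeds $1$, the $L^1$ boundedness of the product multiplier therefore only needs to hold up to the polynomial-exponential loss.

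The main term is handled as in Lemma~\ref{lem7.2}. Write $\Lambda_{k_1}(I-G_{s,\nu})\mathcal K^{s,\nu}_{j,y}[\Lambda_{k_2}a](x)$ via Fourier inversion as an integral over $t,\eta,\xi,w$ with phase $e^{2\pi i(\langle x-w,\xi\rangle+\langle tw+(1-t)y,\eta\rangle)}$. The $w$-integral produces the frequency $\xi-t\eta$ acting on $K_j^{s,\nu}(w-y)$. Now the size of $\xi-t\eta$ cannot be bounded from below using magnitudes alone, since $|\xi|\approx|\eta|$ up to $2^{\ell(s)}$. Instead we use the factor $1-\phi$, which forces $|\langle\nu,\xi\rangle|\gtrsim 2^{-s\gamma}s^{14}|\xi|$. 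We integrate by parts in $w$ only in the direction $\nu$, using $(\langle\nu,\nabla_w\rangle)^N$ and dividing by $\langle\nu,\xi-t\eta\rangle^N$. The kernel $K_j^{s,\nu}$ satisfies the good directional bound \eqref{3.1.5} together with \eqref{3.1.2}, so $\|(\langle\nu,\nabla\rangle)^N K_j^{s,\nu}\|_{L^1}\lesssim 2^{-s\gamma(d-1)}2^{N(\ell_\varepsilon(s)-j)}$. This is where the factor $s^{11(N+d)/\varepsilon}$ comes from, since $2^{\ell_\varepsilon(s)}\approx s^{11/\varepsilon}$.

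The lower bound on $|\langle\nu,\xi-t\eta\rangle|$ is the genuine obstacle. The localization $|\langle\nu,\xi\rangle|\gtrsim 2^{-s\gamma}s^{14}2^{k_1}$ holds, but $\langle\nu,t\eta\rangle$ may cancel it. To deal with this, I would exploit the support of $b_Q$ assumed before \eqref{3.3.1}: the function $a$ (really the localized symbol) is concentrated on a tube of length $2^{j}$ in the $\nu$ direction and width $2^{j-s\gamma}$ transversally. I would then split the $\eta$ integral according to whether $|\langle\nu,t\eta\rangle|\le\frac12|\langle\nu,\xi\rangle|$ or not. In the good region, integration by parts gives decay $(2^{k_1-s\gamma}s^{14})^{-N}$. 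Combined with the derivative bound, this is comparable to $2^{(k_1-j+s\gamma)N}$ after the conventions $|\xi|\sim2^{-k_1}$ used in Lemma~\ref{lem7.2}, at the price of $s^{C N/\varepsilon}$. In the bad region, the measure of the set of admissible $\eta$ (a slab of width $\sim2^{-s\gamma}2^{k_1}$ intersected with $\operatorname{supp}\beta_{k_2}$, $k_2\le k_1+10$) is small. I would combine it with $\|\hat a\|_\infty\lesssim 2^{jd-s\gamma(d-1)}$ from \eqref{3.3.1} and the range $|k_1-k_2|\le\ell(s)+10$, which costs at most $2^{d\ell(s)}\lesssim s^{11d}$.

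Finally, the $x$- and $\xi$-integrals are handled with $(I-2^{-2k_1}\Delta_\xi)^{N_1}$, $N_1=[d/2]+1$, exactly as in Lemma~\ref{lem7.2}, and the $\eta$-integral with the volume $O(2^{-k_2d})$. Collecting the factors, the $2^{s\gamma}$-type losses arise from three sources: anisotropic derivatives of the multiplier $1-\phi$ (each $\xi$ derivative costs $2^{s\gamma}s^{-14}|\xi|^{-1}$), the width of the cone, and $\|\hat a\|_\infty$. Their total should be at most $2^{(d+3)s\gamma}$, which yields the stated minimum. The step I expect to be hardest, and would check first, is controlling the anisotropic $\xi$-derivatives of $1-\phi$ inside $(I-2^{-2k_1}\Delta_\xi)^{N_1}$ without losing more than $2^{Cs\gamma}$ with $C\le d+3$. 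If the plain Laplacian loses too much, I would replace it by a weight adapted to the cone, namely $(1+2^{-2k_1}\langle x-w,\nu\rangle^2+2^{-2k_1-2s\gamma}|\pi_\nu^\perp(x-w)|^2)^{-N_1}$.
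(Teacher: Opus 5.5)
There is a genuine gap, and it sits in your bad region, which is where the real content of the lemma lies. With your split, the bad set $\{\eta:|\langle\nu,t\eta\rangle|>\tfrac12|\langle\nu,\xi\rangle|\}$ is the complement of a slab, so it is essentially all of $\operatorname{supp}\beta_{k_2}$, not a thin set. It also contains points with $\xi=t\eta$: take $\eta$ parallel to $\xi$ and $t=|\xi|/|\eta|$, which lies in $\operatorname{supp}\vartheta_s$ for most admissible $k_1,k_2$. At such points, integration by parts in $w$, in any direction, gives nothing. Any bound using only $\|\hat a\|_{L^\infty}$, the $\eta$-volume and $\|K_j^{s,\nu}\|_{L^1}$ is at best $2^{(j-k_2)d}2^{-2s\gamma(d-1)}$. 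That is true even with an extra factor $2^{-s\gamma}s^{14}$ if you really had a slab. This exceeds the target $2^{-s\gamma(d-1)}$ by a factor $\gtrsim 2^{(j-k_2-s\gamma)d}$. So it is worse than the trivial bound exactly in the regime $2^{k_1}\approx 2^{k_2}\ll 2^{j-s\gamma}$, where the factor $2^{(k_1-j+s\gamma)N}$ for arbitrary $N$ has to be produced. A volume gain can never give decay of arbitrary order; some oscillation is indispensable.

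The missing idea is the oscillation in $t$. The phase contains $t\langle w-y,\eta\rangle$, and by \eqref{3.1.7} $w-y$ lies in the cone $|\tfrac{w-y}{|w-y|}-\nu|\le 2^{-2-s\gamma}$. Hence if $|\langle\nu,\eta\rangle|\ge 2^{-s\gamma-1}s^2|\eta|$, then $|\langle w-y,\eta\rangle|\gtrsim 2^{j-k_2-s\gamma}s^2$ as in \eqref{3.3.9}. Integrating by parts $N_3$ times in $t$ against $\vartheta_s$, using \eqref{3.1.4}, then gains $s^{9N_3}2^{(k_2-j+s\gamma)N_3}\lesssim s^{9N_3}2^{(k_1-j+s\gamma+10)N_3}$. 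The paper organizes this with a $t$- and $\xi$-independent split $a=S_{s,\nu}a+(I-S_{s,\nu})a$, using an angular threshold $2^{-s\gamma}s^2$ in $\eta$. On $S_{s,\nu}a$, the gap between $s^{14}$ and $2^{\ell(s)+10}s^2\approx s^{13}$ gives \eqref{3.3.7}, so your $\langle\nu,\nabla_w\rangle$ integration by parts works on the whole support. On $(I-S_{s,\nu})a$, one integrates by parts in $t$. Your own bad set does satisfy $|\langle\nu,\eta\rangle|\gtrsim 2^{-s\gamma}s^{3}|\eta|$, so it could be handled the same way, but the cutoff would then depend on $t$ and $\xi$ and would have to be differentiated too.

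Two smaller points. First, your justification of the bound by $1$ is not valid: an $L^1$ norm of size $s^C2^{Cs\gamma}$ is not absorbed when the second alternative lies between $1$ and $s^C2^{Cs\gamma}$. What you need is the uniform bound \eqref{3.3.6}. It holds because $\beta_{k_1}(\xi)\phi(2^{s\gamma}s^{-14}\langle\nu,\xi/|\xi|\rangle)$ is a smooth bump adapted to a plate of thickness $2^{-k_1-s\gamma}s^{14}$ along $\nu$ and width $2^{-k_1}$ transversally, so its kernel has $O(1)$ $L^1$ norm. Second, the isotropic $(I-2^{-2k_1}\Delta_\xi)^{N_1}$ already suffices; its loss $(2^{s\gamma}s^{-14})^{2N_1}$ in \eqref{3.3.8} fits within the $2^{(d+3)s\gamma}$ budget.
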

\begin{proof}
We have \[\|\Lambda_{k_1}(I - G_{s,\nu}) a\|_{L^1(\mathbb{R}^d)} \lesssim 1 \tag{3.3.6}\label{3.3.6}\] uniformly in $s,\nu$ and $k_1$.So $\|\Lambda_{k_1}(I - G_{s,\nu}) \mathcal{K}_{j,y}^{s,\nu}[\Lambda_{k_2} a]\|_{L^1(\mathbb{R}^d)} \lesssim 2^{-s\gamma(d-1)}$ similarly as before. %We again assume (\ref{6.1}). 
Define the convolution operator $S_{n,\nu}$ by
\[
\widehat{S_{s,\nu} g}(\eta)
= \phi\!\left(2^{s\gamma}s^{-2}\,\langle \nu,\frac{\eta}{|\eta|}\rangle\right)\hat{g}(\eta),
\]
and split $a = S_{s,\nu}a + (I - S_{s,\nu})a$.\\

Note that the claimed estimate in Lemma~\ref{lem 6.3} has two parts. 

Let us first  consider the first part of the estimate. Let 
$
b_{k_1,s,\nu}(\xi)
= \beta_{k_1}(\xi)\left(1 - \phi\!\left(2^{s\gamma}\,s^{-14}\langle \nu,\frac{\xi}{|\xi|}\rangle \right)\right).
$
Applying Fourier inversion and Fubini's theorem, we have 
\begin{align*}
&\Lambda_{k_1}(I-G_{s,\nu})\mathcal{K}_{j,y}^{s,\nu}[\Lambda_{k_2}S_{s,\nu}a](x)\\
=& \int \vartheta_s(t)
\int_{\mathbb{R}^d}\int_{\mathbb{R}^d}
b_{k_1,s,\nu}(\xi)\,
\beta_{k_2}(\eta)\,
\phi\!\left(2^{s\gamma}s^{-2}\langle \nu,\frac{\eta}{|\eta|}\rangle\right)
\hat{a}(\eta) \\
&\qquad \times
\int_{\mathbb{R}^d}e^{2\pi i \langle t w + (1-t)y,\,\eta\rangle}
e^{2\pi i \langle x-w,\,\xi\rangle}
K_j^{s,\nu}(w-y)
\,dw\,d\xi\,d\eta\,dt .
\end{align*}
Now, observe that if $\xi \in \operatorname{supp} b_{k_1,s,\nu}$, then $2^{-k_1-1} \le |\xi| \le \tfrac{6}{5}\,2^{-k_1}$ and $\big|\langle \frac{\xi}{|\xi|}, \nu \rangle\big|
\ge 2^{-s\gamma-1}s^{14}$. Also, for $\eta \in \operatorname{supp}\beta_{k_2}\cdot\phi\!\left(2^{s\gamma-2}\langle \nu, \frac{\cdot}{|\cdot|} \rangle\right)\hat{a}$ we have that 
 $2^{-k_2-1} \le |\eta| \le \tfrac{6}{5}\,2^{-k_2}$,$
\big|\langle \frac{\eta}{|\eta|}, \nu \rangle\big|
\le 2^{-s\gamma}s^2.$ This yields 
$$\frac{|\eta|}{|\xi|}
\leq \tfrac{6}{5}\ 2^{-k_2}\cdot 2^{k_1+1}\leq \tfrac{2\cdot6}{5}2^{(k_1-k_2)}.$$
Thus, for $t\in \operatorname{supp}\vartheta_s$ and since $s>10^{10}$, we get that  
\begin{align*}
\big|\langle \xi - t\eta, \nu \rangle\big|
&\ge |\xi|\left(
\big|\langle \frac{\xi}{|\xi|}, \nu \rangle\big|
- \frac{|\eta|}{|\xi|}\big|\langle \frac{\eta}{|\eta|}, \nu \rangle\big|
\right) \\
&\ge |\xi|\left(
2^{-s\gamma-1}\,s^{14}
- \tfrac{2\cdot6}{5}2^{(k_1-k_2)}\,2^{-s\gamma}s^2
\right) \\
&\ge |\xi|\,2^{-s\gamma-1}\left(
s^{14} - 8\cdot 2^{\ell(s)+10}\,s^{2}
\right)\\
&\gtrsim 2^{-k_1-s\gamma}\ s^{14}.\tag{3.3.7}\label{3.3.7}
\end{align*}
Now, by integration by parts, as in the proof of Lemma(\ref{lem7.2}), except that here we use the directional derivative $\langle \nu, \nabla_w \rangle$ instead of $\Delta_w$, 
we have 
\begin{align*}
&\Lambda_{k_1}(I-G_{s,\nu})\mathcal{K}_{j,y}^{s,\nu}[\Lambda_{k_2}S_{s,\nu}a](x)\\
&= \frac{1}{(2\pi i)^{N_2}}
\int \vartheta_s(t)
\int_{\mathbb{R}^d}
\beta_{k_2}(\eta)\,
\phi\!\left(2^{s\gamma}s^{-2}\langle \nu,\frac{\eta}{|\eta|}\rangle\right)
\hat{a}(\eta)\int_{\mathbb{R}^d} \int_{\mathbb{R}^d}\\
&
e^{2\pi i (\langle x-w,\xi\rangle+\langle tw+(1-t)y,\eta\rangle)}
\frac{\left(I - \frac{2^{-2k_1}}{4\pi^2}\Delta_\xi\right)^{N_1}
\big[\frac{b_{k_1,s,\nu}(\xi)}{\langle \xi - t\eta, \nu \rangle^{N_2}}\big]}{\big(1-2^{-2k_1}|x-w|^2\big)^{N_1}} \\
&\quad \times\langle \nu,\nabla_w\rangle^{N_2} K_j^{s,\nu}(w-y)\,dw\ d\xi\,d\eta\,dt.
\end{align*}
Since  $\left|\partial_\xi^{N_3} b_{k_1,s,\nu}(\xi)\right| \le C_{N_3}\,(2^{s\gamma}s^{-14})^{N_3}\,2^{-k_1 N_3}$, we get 
\[\tag{3.3.8}\label{3.3.8}\left|\left(I-\frac{2^{-2k_1}\Delta_\xi}{4\pi^2}\right)^{N_1}\!\left[\frac{b_{k_1,s,\nu}(\xi)}{|\xi-t\eta|^{N_2}}\right]\right| \le C_{N_1}\,(2^{s\gamma}s^{-14})^{2N_1}\,(2^{-(k_1+s\gamma)}s^{14})^{-N_2}.\]
Also using (\ref{3.1.3}),(\ref{3.1.5}) and (\ref{3.1.7}), gives us 
\begin{align*}
\|\langle \nu,\nabla_w\rangle^{N_2} K_j^{s,\nu}\|_{L^1(\mathbb{R}^d)}
&\le C_{N_2}\,2^{(\ell_\varepsilon(s)-j)N_2}\,2^{-s\gamma(d-1)}.
\end{align*}
We assume $2N_1>d$, integrate in $x$ and $\xi$, use (\ref{3.3.1}), (\ref{3.3.7}) and use that the measure of the support of $\beta_{k_2}$ is $O(2^{-k_2 d})$. Then we get 
\begin{align*}
&\big\|\Lambda_{k_1}(I-G_{s,\nu})\mathcal{K}_{j,y}^{s,\nu}[\Lambda_{k_2}S_{s,\nu}a]\big\|_{L^1(\mathbb{R}^d)}\\
&\lesssim_{N_1,N_2}
\left(2^{s\gamma}s^{-14}\right)^{2N_1}
\left(2^{-(k_1+s\gamma)}s^{14}\right)^{-N_2}\|\hat{a}\|_{L^\infty(\mathbb{R}^d)}
\,2^{-k_2 d}\,2^{(\ell_\varepsilon(s)-j)N_2}\,2^{-s\gamma(d-1)}\\
&\lesssim_{N_1,N_2}
2^{-s\gamma(d-1)}\,
s^{-14(2N_1+N_2)}\,
2^{\ell_\varepsilon(s)N_2}\,
2^{(k_1-k_2)d}\,
2^{(k_1-j+s\gamma)(N_2-d)}\,
2^{s\gamma(2N_1+1)} \\
&\lesssim_{N}
2^{-s\gamma(d-1)}\,
s^{11(N+d)}\,
2^{s\gamma(d+3)}\,
2^{(k_1-j+s\gamma)N}.
\end{align*}
Here, we have taken
$N_1 \in \left\{\tfrac{d+1}{2},\,\tfrac{d+2}{2}\right\}$ and $
N:=N_2-d$.
This completes the first estimate. 

Next, we consider the second part. For, let 
\[\tilde{b}_{k,s,\nu}(\eta)
= \beta_{k}(\xi)\left(1-\phi\!\left(2^{s\gamma}\ s^{-2}\langle\nu,\frac{\eta}{|\eta|}\rangle\right)\right).\]
Then by Fourier inversion and Tonelli's theorem, we have 
\begin{align*}
&\Lambda_{k_1}(I-G_{s,\nu})\mathcal{K}_{j,y}^{s,\nu}[\Lambda_{k_2}(I-S_{s,\nu})a](x)\\
=&\int_{\mathbb{R}^d} K_j^{s,\nu}(w-y)
\int_{\mathbb{R}^d}\int_{\mathbb{R}^d}
b_{k_1,s,\nu}(\xi)\,\tilde{b}_{k_2,s,\nu}(\eta)\,\hat{a}(\eta) \\
&\qquad \times
\left[\int \vartheta_s(t)\,
e^{2\pi i (\langle x-w,\xi\rangle+\langle tw+(1-t)y,\eta\rangle})\,dt\right]
\,d\xi\,d\eta\,dw.
\end{align*}
Now, observe that for $w-y \in \operatorname{supp}K_j^{s,\nu}$ we have that  $\left|\frac{w-y}{|w-y|} - \nu\right| \le 2^{-2-s\gamma}$. Also, for $\eta \in \operatorname{supp}\tilde{b}_{k_2,s,\nu}$, the estimate $|\phi(2^{s\gamma}s^{-2}\langle \nu,\frac{\eta}{|\eta|}\rangle)| \ge \tfrac{1}{2}$ that is, $|\langle \nu,\eta\rangle| \ge 2^{-s\gamma-1}\,|\eta|$ holds. Thus, 
\begin{align*}
|\langle w-y,\eta\rangle|
&=\big |\big\langle \frac{w-y}{|w-y|}-\nu,\eta\big\rangle
+ \langle \nu,\eta\rangle\big| |w-y| \\
&\ge |w-y|\left( |\langle \nu,\eta\rangle|
- \left|\frac{w-y}{|w-y|}-\nu\right|\,|\eta| \right) \\
&\ge |w-y|\,|\eta|\,2^{-s\gamma}\left(\tfrac{1}{2}s^2-\tfrac{1}{4}\right) \\
&\ge 2^{j-k_2-s\gamma-5}s^2.\tag{3.3.9}\label{3.3.9}
\end{align*}
Therefore, integrating by parts in $t$ variable gives us  
\begin{align*}
&\Lambda_{k_1}(I-G_{s,\nu})\mathcal{K}_{j,y}^{s,\nu}[\Lambda_{k_2}(I-S_{s,\nu})a](x)\\
=&\int_{\mathbb{R}^d} K_j^{s,\nu}(w-y)
\int_{\mathbb{R}^d}\int_{\mathbb{R}^d}
\hat{a}(\eta)\,\tilde{b}_{k_2,s,\nu}(\eta)\,\frac{\left(I-\frac{2^{-2k_1}\Delta_\xi}{4\pi^2}\right)^{N_1}
\big[b_{k_1,s,\nu}(\xi)\big]}{\big(1-2^{-2k_1}|x-w|^2\big)^{N_1}}
\\
&\qquad \times
e^{2\pi i \langle x-w,\xi\rangle}
\left[\big(\frac{i}{2\pi}\big)^{N_3}
\frac{1}{\langle w-y,\eta\rangle^{N_3}}
\int \vartheta_s^{(N_3)}(t)\,
e^{2\pi i \langle w-y,\eta\rangle t}\,dt
\right]
\,d\xi\,d\eta\,dw.
\end{align*}
Using (\ref{3.1.4}), (\ref{3.3.1}), (\ref{3.3.8}) and (\ref{3.3.9}) with $N_1>\frac{d}{2}$, we get that 
\begin{align*}
&\big\|\Lambda_{k_1}(I-G_{s,\nu})\mathcal{K}_{j,y}^{s,\nu}[\Lambda_{k_2}(I-S_{s,\nu})a]\big\|_{L^1(\mathbb{R}^d)}\\
&\lesssim_{N_1,N_3}
\left(2^{j-k_2-s\gamma-5}\,s^{2}\right)^{-N_3}
\|\vartheta_s^{(N_3)}\|_{L^1(\mathbb{R})}  \left(2^{s\gamma}s^{-14}\right)^{2N_1}
\,2^{-k_2 d}\,\|\hat{a}\|_{L^\infty(\mathbb{R}^d)}
\,\|K_j^{s,\nu}\|_{L^1(\mathbb{R}^d)}\\
&\lesssim_{N}
2^{-s\gamma(d-1)}\,
s^{11(N+d)}\,
2^{s\gamma(d+3)}\,
2^{(k_1-j+s\gamma)N},
\end{align*}
where $N_1 \in \left\{\tfrac{d+1}{2},\,\tfrac{d+2}{2}\right\}$ and $N:=N_3-d$.\\
\end{proof}

With these estimates in hand, we now complete the proof of Lemma~\ref{lem 3.3}. 

For fixed $ s, \nu $\and $j$, and for a fixed cube $Q,$ with $\ell(Q)=2^{j-s}$ we define
\begin{align*}
I_{k_1}
&:= \tilde\Lambda_{k_1}(I-P_{j-s+\ell(s)})\,\Lambda_{k_1}(I-G_{s,\nu})
\left[\int b_Q(y)\,\mathcal{K}_{j,y}^{s,\nu}[V_{j-s+\ell(s)}a]\,dy\right], \\
II_{k_1,k_2}
&:= \tilde\Lambda_{k_1}(I-P_{j-s+\ell(s)})\,\Lambda_{k_1}(I-G_{s,\nu})
\left[\int b_Q(y)\,\mathcal{K}_{j,y}^{s,\nu}[\Lambda_{k_2}a]\,dy\right].
\end{align*}
By (\ref{3.3.4}) and (\ref{3.3.5}), it is enough to show that 
\begin{align*}
\sum_{k_1} \|I_{k_1}\|_{L^1(\mathbb{R}^d)}
+ \sum_{k_1}\sum_{k_2<j-s+\ell(s)}
\|I_{k_1,k_2}\|_{L^1(\mathbb{R}^d)}
\;\lesssim\;
s^{-11}\,2^{-s\gamma(d-1)}\,\|b_Q\|_{L^1(\mathbb{R}^d)}.\tag{3.3.10}\label{3.3.10}
\end{align*}
Since 
\[
\tilde\Lambda_{k_1}(I-P_{j-s+\ell(s)})g
= (\check{\tilde\beta}_{k_1}
- \check{\tilde\beta}_{k_1}*\Phi_{j-s+\ell(s)})*g, 
\] 
by the cancellation property of the kernel, we have that 
\[\tag{3.3.11}\label{3.3.11}
\|\tilde\Lambda_{k_1}(I-P_{j-s+\ell(s)})\|_{L^1(\mathbb{R}^d)\to L^1(\mathbb{R}^d)}
\lesssim \min\left\{1,\;2^{\,j-s+\ell(s)-k_1}\right\}.
\]
By Lemmata~\ref{lem6.1}, (\ref{3.3.11}) and (\ref{3.3.6}), we get that 
\begin{align*}
\sum_{k_1 \ge j-s+\ell(s)-10}
\|I_{k_1}\|_{L^1(\mathbb{R}^d)}
&\lesssim \sum_{k_1 \ge j-s+\ell(s)-10}
2^{ j - s + \ell(s)+k_1}\,s^{-11}\,2^{-s\gamma(d-1)}\,
\|b_Q\|_{L^1(\mathbb{R}^d)} \\
&\lesssim s^{-11}\,2^{-s\gamma(d-1)}\,
\|b_Q\|_{L^1(\mathbb{R}^d)}.\tag{3.3.12}\label{3.3.12}
\end{align*}
Note that 
\begin{align*}
&\Lambda_{k_1}(I-G_{s,\nu})
\left[\int b_Q(y)\,\mathcal{K}_{j,y}^{s,\nu}[V_{j-s+\ell(s)}a]\,dy\right](x)\\
&= \widetilde{\Lambda}_{k_1}(I-G_{s,\nu})\,\Lambda_k\!\left[\int b_Q(y)\,\mathcal{K}_{j,y}^{s,\nu}[V_{j-s+\ell(s)}a]\,dy\right](x),
\end{align*}
and by Fubini's theorem
\begin{align*}
& \Lambda_{k_1}\!\left[\int b_Q(y)\,\mathcal{K}_{j,y}^{s,\nu}[V_{j-s+\ell(s)}a]\,dy\right](x)\\
&= \int_{\mathbb{R}^d}
(\varphi -\varphi(2\cdot))\check{}_{k_1}(x-z)
\left( \int b_Q(y)\,\mathcal{K}_{j,y}^{s,\nu}[V_{j-s+\ell(s)}a]\,dy \right)(z)\,dz \\
&= \int
\Lambda_{k_1}\mathcal{K}_{j,y}^{s,\nu}[V_{j-s+\ell(s)}a](x)\,
b_Q(y)\,dy.
\end{align*}
Therefore, we have 
\begin{align*}
&\left\|\Lambda_{k_1}(I-G_{s,\nu})
\int b_Q(y)\,\mathcal{K}_{j,y}^{s,\nu}[V_{j-s+\ell(s)}a]\,dy
\right\|_{L^1(\mathbb{R}^d)} \\
&= \left\|\widetilde{\Lambda}_{k_1}(I-G_{s,\nu})\Lambda_{k_1}
\left[\int b_Q(y)\,\mathcal{K}_{j,y}^{s,\nu}[V_{j-s+\ell(s)}a]\,dy\right]
\right\|_{L^1(\mathbb{R}^d)} \\
&\le \|\widetilde{\Lambda}_{k_1}(I-G_{s,\nu})\|_{L^1(\mathbb{R}^d)\to {L^1(\mathbb{R}^d)}}\,
\left\|\Lambda_{k_1}
\left[\int b_Q(y)\,\mathcal{K}_{j,y}^{s,\nu}[V_{j-s+\ell(s)}a]\,dy\right]
\right\|_{L^1(\mathbb{R}^d)} \\
&\le \|\widetilde{\Lambda}_{k_1}(I-G_{s,\nu})\|_{{L^1(\mathbb{R}^d)}\to {L^1(\mathbb{R}^d)}}\,
\|b_Q\|_{L^1(\mathbb{R}^d)}\,
\sup_{y \in Q}\left\|\Lambda_{k_1}\mathcal{K}_{j,y}^{s,\nu}[V_{j-s+\ell(s)}a]
\right\|_{L^1(\mathbb{R}^d)}.
\end{align*}
Thus by (\ref{3.3.6}), the second estimate of Lemma\ref{lem7.2} $(ii)$ with $k_2=j-s+\ell(s)$, we get that 
\begin{align*}
\sum_{k_1 < j-s+\ell(s)-10}
\|I_{k_1}\|_{L^1(\mathbb{R}^d)}
&\lesssim_N 2^{-s\gamma(d-2)}\sum_{k_1 < j-s+\ell(s)-10}
2^{(k_1 - j + s - \ell(s))d}\,2^{(k_1-j+s\gamma)N}\,
\|b_Q\|_{L^1(\mathbb{R}^d)} \\
&\lesssim_N 2^{-s\gamma(d-2)}\,s^{11N}\,2^{s(\gamma-1)N}\,
\|b_Q\|_{L^1(\mathbb{R}^d)}.\tag{3.3.13}\label{3.3.13}
\end{align*}
Further, by (\ref{3.3.11}) and Lemma \ref{lem7.2} $(i)$, we have that 
\begin{align*}
&\sum_{\substack{k_1 \ge j-s+\ell(s)-10\\k_2 < \min\{k_1-\ell(s)-10,\; j-s+\ell(s)\}}}
\|II_{k_1,k_2}\|_{L^1(\mathbb{R}^d)} \\
&\lesssim_N \sum_{\substack{k_1 \ge j-s+\ell(s)-10\\k_2 < \min\{k_1-\ell(s)-10,\; j-s+\ell(s)\}}}
2^{\,j-s+\ell(s)-k_1}\,
2^{-s\gamma(d-2)}\,s^{11d+11N}\,2^{(k_2-j+s\gamma)N}
\|b_Q\|_{L^1(\mathbb{R}^d)} \\
&\lesssim_N 2^{-s\gamma(d-2)}\,s^{11d+22N}\,2^{s(\gamma-1)N}\|b_Q\|_{L^1(\mathbb{R}^d)},\tag{6.14}\label{3.3.14}
\end{align*}
Now apply Lemma\ref{lem 6.3} to deduce that 
\begin{align*}
& \sum_{\substack{k_1 \ge j-s+\ell(s)-10\\k_1-\ell(s)-10 \le k_2 \le k_1+10\\k_2<j-s+\ell(s)}}
\|II_{k_1,k_2}\|_{L^1(\mathbb{R}^d)} \\
&\lesssim_N \ell(s)\,
2^{4s\gamma}\,s^{\frac{11(N+d)}{\varepsilon}}\,\sum_{k_1 \leq j-s+2\ell(s)+10}
2^{(k_1-j+s\gamma)N}\,\|b_Q\|_{L^1(\mathbb{R}^d)}\\
&\lesssim_N 2^{4s\gamma}s^{11(N+d)(\varepsilon^{-1}+2)}\,2^{s(\gamma-1)N}\;\|b_Q\|_{L^1(\mathbb{R}^d)}.\tag{3.3.15}\label{3.3.15}
\end{align*}
%When $k_1 \ge j - s + \ell(s) - 10$ , $k_2 \not> k_1 + 10$, since $k_2 < j - s + \ell(s)$. Now sum the terms $\|II_{k_1,k_2}\|_{L^1(\mathbb{R}^d)}$ with $k_1 < j - s + \ell(s) - 10$.\\
Note that by Lemma~\ref{lem7.2} $(i)$ and estimates (\ref{3.3.6}) and (\ref{3.3.11}), we have 
\begin{align*}
&\sum_{\substack{k_1 < j-s+\ell(s)-10 \\ k_2 < k_1-\ell(s)-10}}
\|II_{k_1,k_2}\|_{L^1(\mathbb{R}^d)} \\
&\lesssim_N 2^{-s\gamma(d-2)}\,s^{11d+11N}  \sum_{k_1 < j-s+\ell(s)-10}
\sum_{k_2 < k_1-\ell(s)-10}
2^{(k_2-j+s\gamma)N}\,\|b_Q\|_{L^1(\mathbb{R}^d)} \\
&\lesssim_N 2^{-s\gamma(d-2)}\,s^{11d+11N}  2^{s(\gamma-1)N}\,\|b_Q\|_{L^1(\mathbb{R}^d)}.\tag{3.3.16}\label{3.3.16}
\end{align*}
Similarly, by Lemma\ref{lem7.2} $(ii)$ and estimates (\ref{3.3.6}) and (\ref{3.3.11}), we get that 
\begin{align*}
&\sum_{\substack{k_1 < j-s+\ell(s)-10 \\ k_1+10<k_2 < j-s+\ell(s)}}\|II_{k_1,k_2}\|_{L^1(\mathbb{R}^d)} \\
&\lesssim_N2^{-s\gamma(d-2)}
\sum_{k_1 < j-s+\ell(s)-10}
2^{(k_1-j+s\gamma)N}
\sum_{k_2 > k_1+10} 2^{(k_1-k_2)d}\,
\|b_Q\|_{L^1(\mathbb{R}^d)} \\
&\lesssim 2^{-s\gamma(d-2)}\,s^{11N}\,2^{s(\gamma-1)N}\,
\|b_Q\|_{L^1(\mathbb{R}^d)}.\tag{3.3.17}\label{3.3.17}
\end{align*}

Finally, Lemma\ref{lem 6.3} yields 
\begin{align*}
&\sum_{\substack{k_1 < j-s+\ell(s)-10 \\ k_1-\ell(s)-10 \le k_2 \le k_1+10}}
\|II_{k_1,k_2}\|_{L^1(\mathbb{R}^d)} \\
&\lesssim_N \ell(s)\,s^\frac{11(N+d)}{\varepsilon}\,2^{4s\gamma}
\sum_{k_1 < j-s+\ell(s)-10}
2^{(k_1-j+s\gamma)N}\,\|b_Q\|_{L^1(\mathbb{R}^d)} \\
&\lesssim_N 2^{4s\gamma}\,s^{11(N+d)(\varepsilon^{-1}+1)}\,2^{s(\gamma-1)N}\,\|b_Q\|_{L^1(\mathbb{R}^d)}.\tag{3.3.18}\label{3.3.18}
\end{align*}
Putting the estimates (\ref{3.3.13})-(\ref{3.3.18}) together, we finally have  
\begin{align*}
&\sum_{k_1 < j-s+\ell(s)-10}
\|I_{k_1}\|_{L^1(\mathbb{R}^d)}
+ \sum_{k_1}\sum_{k_2 < j-s+\ell(s)}
\|II_{k_1,k_2}\|_{L^1(\mathbb{R}^d)} \\
&\lesssim_N\,2^{A_1s}s^{A_2N}\,
2^{s(\gamma-1)N}\,
\|b_Q\|_{L^1(\mathbb{R}^d)}\\
&\lesssim s^{-11}\,2^{-s\gamma(d-1)}\,
\|b_Q\|_{L^1(\mathbb{R}^d)},
\end{align*}
where $s>s(\varepsilon), A_1=4, A_2=44\varepsilon^{-1}d$ and $N=100d$.

Observe that in the estimates (\ref{3.3.13})-(\ref{3.3.18}), the highest power of $2^s$ is $4\gamma$ which is smaller than $4$. Also, the highest power of $s^N$ is $11(N+d)(\varepsilon^{-1}+2)$ which is smaller than $44\varepsilon^{-1}d$ for $N>d$. Finally, noting that $s^{-1}\log s \le 10^{-4}\tfrac{\varepsilon}{d}$ for $s>s(\varepsilon)$, we complete the proof of the lemma.
\qed
\section{Proof of Lemma \ref{lem 3.4}}\label{sec:lemmaL2}
Note that, for fixed $s>0$, we have \[ \tag{3.4.1}\label{3.4.1} \sup_{\xi \neq 0}\sum_{\theta \in \Theta_s} \left|\phi\!\left(2^{s\gamma}s^{-14}\langle \nu,\frac{\xi}{|\xi|}\rangle\right)\right| \lesssim 2^{s\gamma(d-2)}\,s^{14}.\] 
Observe that it is sufficient to take the supremum over $\xi \in \mathbb S^{d-1}$. For $|\xi|=1$ and for $\phi\!\left(2^{s\gamma-4}\langle \nu,\xi\rangle\right)\neq 0$ we have $|\langle \nu,\xi\rangle| \leq 2^{-s\gamma}\,s^{14}$. If $|\langle \nu,\xi\rangle| \le 2^{-s\gamma}\,s^{14}$ and $\theta \in \mathbb S^{d-1}$ with $|\theta-\nu|\le 2^{-5-s\gamma}$, then 
$$|\langle \theta,\xi\rangle| \le |\langle \theta-\nu,\xi\rangle| + |\langle \nu,\xi\rangle| \leq 2^{-5-s\gamma} + 2^{-s\gamma}\,s^{14}.$$
Thus, if $|\langle \nu,\xi\rangle| \le 2^{-s\gamma}s^{14}$, we have 
\[
E_s^\nu=\{\theta \in \mathbb S^{d-1} : |\theta-\nu| < 2^{-5-s\gamma}\}
\subset \{\theta \in \mathbb S^{d-1} : |\langle \theta,\xi\rangle| \leq 2^{-s\gamma+1}s^{14}\}.
\]
Let $m_s$ be the count of $\nu \in \Theta_s$ satisfying $|\langle \nu,\xi\rangle| \le 2^{-s\gamma}s^{14}$, then we have that 
$$m_s \left(2^{-5-s\gamma}\right)^{d-1} \lesssim 2^{-s\gamma}s^{14}\Rightarrow m_s \lesssim 2^{s\gamma(d-2)}\,s^{14}.$$ 
Since $\{E^{\nu}_s\}_{\nu \in \Theta_s}$ are pairwise disjoint, by applying the Plancherel's theorem, Cauchy--Schwarz inequality and the estimate (\ref{3.4.1}), we get that 
\begin{align*}
&\quad\left\|\sum_{\nu \in \Theta_s}
G_{s,\nu}\!\left[\sum_{j\in\mathbb{Z}}
(I-P_{j-s+\ell(s)})T_j^{s,\nu}B^{\,j-s}\right]\right\|_{L^2(\mathbb{R}^d)}^2\\
&\le \int_{\mathbb{R}^d}
\left(\sum_{\nu \in \Theta_s}
\left|\phi\!\left(2^{s\gamma}s^{-14}\langle \nu,\frac{\xi}{|\xi|}\rangle\right)\right|^2
\right)
\left(\sum_{\nu \in \Theta_s}
\left|\big(\sum_{j\in\mathbb{Z}}
(I-P_{j-s+\ell(s)})T_j^{s,\nu}B^{\,j-s}\big)\hat{}(\xi)\right|^2
\right)
d\xi \\
&\lesssim 2^{s\gamma(d-2)}\,s^{14}
\int_{\mathbb{R}^d}
\sum_{\nu \in \Theta_s}
\left|\big(\sum_{j\in\mathbb{Z}}
(I-P_{j-s+\ell(s)})T_j^{s,\nu}B^{\,j-s}\big)\hat{}(\xi)
\right|^2 d\xi \\
&\lesssim 2^{s\gamma(d-2)}\,s^{14}
\sum_{\nu \in \Theta_s}
\left\|\sum_{j\in\mathbb{Z}}
(I-P_{j-s+\ell(s)})T_j^{s,\nu}B^{\,j-s}
\right\|_{L^2(\mathbb{R}^d)}^2.
\end{align*}
In view of the estimate $\operatorname{card}(\Theta_s)\lesssim 2^{s\gamma(d-1)},$ it suffices to show that for each $\nu \in \Theta_s$, the following holds 
\begin{align*}\tag{3.4.2}\label{3.4.2}
\left\|\sum_{j\in\mathbb{Z}}
(I-P_{j-s+\ell(s)})T_j^{s,\nu}B^{\,j-s}
\right\|_{L^2(\mathbb{R}^d)}^2
\lesssim 2^{-2s\gamma(d-1)}
\sum_{j\in\mathbb{Z}} \|B^{\,j-s}\|_{L^1(\mathbb{R}^d)}.
\end{align*}
Note that
\begin{align*}
\big|(I-P_{j-s+\ell(s)})T_j^{s,\nu}B^{\,j-s}(x)\big|
&\leq\left( |K_j^{s,\nu}| + \big|\Phi_{j-s+\ell(s)}\big| * |K_j^{s,\nu}| \right) * |B^{\,j-s}|(x).
\end{align*}
Denote $$H_j^{s,\nu}(x) := 2^{-jd}\,\chi_{\tau_j^{s,\nu}}(x),$$
where
\[
\tau_j^{s,\nu}
:= \left\{ x \in \mathbb{R}^d :
|\langle x,\nu\rangle| \le 2^{j+3},\;
|x - \langle x,\nu\rangle \nu| \le 2^{j+3-s\gamma}
\right\}.
\]
Therefore by the estimates (\ref{3.1.3}), (\ref{3.1.7}) and using the properties of $\Phi$, we have
\[\big|(I-P_{j-s+\ell(s)})T_j^{s,\nu}B_j^{\,j-s}(x)\big|\lesssim \big|H^{s,\nu}_j*B^{j-s}(x)\big|.\]
Therefore, we apply the $TT^*$ argument for any $N \in \mathbb{N}$ to get that 
\begin{align*}
&\quad\left\|\sum_{j=-N}^{N}
(I-P_{j-s+\ell(s)})T_j^{s,\nu}B^{\,j-s}
\right\|_{L^2(\mathbb{R}^d)}^2\\
&\lesssim \sum_{j=-N}^{N}
\int_{\mathbb{R}^d}
\big(H_j^{s,\nu} \ast H_j^{s,\nu} \ast |B^{\,j-s}|\big)(x)\,
|B^{\,j-s}|(x)\,dx \\
&\quad + 2\sum_{j=-N}^{N}\sum_{i=-N}^{j-1}
\left(\Re \int_{\mathbb{R}^d}
\big(H_j^{s,\nu} \ast H_i^{s,\nu} \ast |B^{\,i-s}|\big)(x)\,
|B^{\,j-s}|(x)\,dx\right)\\
&\lesssim 2\sum_{j\in\mathbb{Z}}
\int_{\mathbb{R}^d}
|B^{\,j-s}(x)|
\left( \sum_{i\le j}
H_j^{s,\nu} \ast H_i^{s,\nu} \ast |B^{\,i-s}|(x) \right)
dx.\tag{3.4.3}\label{3.4.3}
\end{align*}
Note that
\[
\|H_i^{s,\nu}\|_{L^1(\mathbb{R}^d)}
\leq 2^{-id}\,|\tau_i^{s,\nu}|
\lesssim 2^{-s\gamma(d-1)},
\]
and hence for $i\leq j$, we get that
\[
|H_j^{s,\nu} \ast H_i^{s,\nu}(x)|
\le 2^{-s\gamma(d-1)}\,2^{-jd}\,
\chi_{\widetilde{\tau}_j^{s,\nu}}(x),
\]
where $\widetilde{\tau}_i^{s,\nu}:= \left\{ x \in \mathbb{R}^d :
|\langle x,\nu\rangle| \le 2^{j+4},\;
|x - \langle x,\nu\rangle \nu| \le 2^{j+4-s\gamma}
\right\}.$

Thus for each $x \in \mathbb{R}^d$ and $j \in \mathbb{Z}$,
\begin{align*}
\sum_{i \le j} H_j^{s,\nu} \ast H_i^{\nu} \ast |B^{i-s}|(x)
&\le\sum_{i \le j}
2^{-s\gamma(d-1)}\,2^{-jd}
\int_{x+\widetilde{\tau}_j^{s,\nu}} |B^{i-s}(y)|\,dy \\
&\le \sum_{i \le j}
2^{-s\gamma(d-1)}\,2^{-jd}
\sum_{\substack{Q \in \mathcal{Q}:\\ \ell(Q)=i-s \\ Q \cap (x+\widetilde{\tau}_j^{s,\nu}) \neq \emptyset}}
\int_Q |b_Q(y)|\,dy \\
&\lesssim 2^{-s\gamma(d-1)}\,2^{-jd}\,\alpha\,\sum_{i \le j}\sum_{\substack{Q \in \mathcal{Q}:\\ \ell(Q)=i-s \\ Q \cap (x+\widetilde{\tau}_j^{s,\nu}) \neq \emptyset}}|Q|\\
&\lesssim 2^{-2s\gamma(d-1)}\,\alpha,
\end{align*}
Note that with this assertion, the estimate (\ref{3.4.2}) follows and hence the proof is complete.
\qed
%%%
\section*{Acknowledgement}
Saurabh Shrivastava acknowledges the financial support by Anusandhan National Research Foundation, India under the project ANRF/ARG/2025/000940/MS. Sangita Kundu is supported by Indian Institute of Science Education and Research Bhopal for her PhD fellowship.
\bibliography{biblio}

\end{document}